\renewcommand\mathcal{\mathscr}
\theoremstyle{plain}
\newtheorem{theorem}{Theorem}[section]
\newtheorem*{theorem*}{Theorem}
\newtheorem{lemma}[theorem]{Lemma}
\newtheorem{proposition}[theorem]{Proposition}
\theoremstyle{remark}
\newtheorem{remark}[theorem]{Remark}
\numberwithin{equation}{section}
\theoremstyle{definition}
\newtheorem{definition}[theorem]{Definition}
\numberwithin{equation}{section}
\theoremstyle{notation}
\numberwithin{equation}{section}
\theoremstyle{Basic assumptions}
\newtheorem{Basic assumptions}[theorem]{Basic assumptions}
\numberwithin{equation}{section}
\newcommand\quant{\advance\quantno by1
                      \ifnum\quantno=1\qquad\else\quad\fi\forall }
\newcommand\itemno[1]{(\romannumeral #1)}
\renewcommand\Re{\operatorname{\mathrm{Re}}}
\renewcommand\Im{\operatorname{\mathrm{Im}}}
\newcommand\rest[1]{\kern-.1em
          \lower.5ex\hbox{$\scriptstyle #1$}\kern.05em}
\newcommand\diam[1]{\mathrm{diam}#1}
\newcommand\set[1]{{\left\{#1\right\}}}
\newcommand\bigset[1]{\bigl\{#1\bigr\}}
\renewcommand\mod[1]{\vert{#1}\vert}
\newcommand\bigmod[1]{\bigl\vert{#1}\bigr|}
\newcommand\Bigmod[1]{\Bigl\vert{#1}\Bigr|}
\newcommand\norm[2]{{\Vert{#1}\Vert_{#2}}}
\newcommand\normto[3]{{\Vert{#1}\Vert_{#2}^{#3}}}
\newcommand\bignorm[2]{\big\Vert{#1}\big\Vert_{#2}}
\newcommand\bignormto[3]{\big\Vert{#1}\big\Vert_{#2}^{#3}}
\newcommand\opnorm[2]{|\!|\!| {#1} |\!|\!|_{#2}}
\newcommand\bigopnorm[2]{\bigl|\!\bigl|\!\bigl| {#1} 
\bigr|\!\bigr|\!\bigr|_{#2}}
\newcommand\prodo[2]{\left\langle#1,#2\right\rangle}
\newcommand\wrt{\,\text{\rm d}}
\newcommand\bS{\mathbf{S}}
\newcommand\BC{\mathbb{C}}
\newcommand\BD{\mathbb{D}}
\newcommand\BN{\mathbb{N}}
\newcommand\BR{\mathbb{R}}
\newcommand\BZ{\mathbb{Z}}
\newcommand\cB{\mathcal{B}}   
\newcommand\frb{\mathfrak{b}}
\newcommand\cD{\mathcal{D}}
\newcommand\cF{\mathcal{F}}
\newcommand\cH{\mathcal{H}}   
\newcommand\frh{\mathfrak{h}} 
\newcommand\cI{\mathcal{I}}
\newcommand\cL{\mathcal{L}}    
\newcommand\cM{\mathcal{M}}  \newcommand\fM{\mathfrak{M}} \newcommand\frm{\mathfrak{m}}  
   \newcommand\fro{\mathfrak{o}} 
\newcommand\cQ{\mathcal{Q}}
\newcommand\cR{\mathcal{R}}    
\newcommand\cS{\mathcal{S}}    
\newcommand\cT{\mathcal{T}}
\newcommand\al{\alpha}
\newcommand\be{\beta}
\newcommand\ga{\gamma}    
\newcommand\de{\delta}
\newcommand\ep{\epsilon}  \newcommand\vep{\varepsilon}
\newcommand\la{\lambda}   
\newcommand\om{\omega}      
\newcommand\te{\theta}
\newcommand\vp{\varphi}
\newcommand\OV{\overline}
\newcommand\funnyk{k\hbox to 0pt{\hss\phantom{g}}}
\newcommand\lu[1]{L^1(#1)}
\newcommand\lp[1]{L^p(#1)}
\newcommand\ld[1]{L^2(#1)}
\newcommand\ly[1]{L^\infty(#1)}
\newcommand\hu[1]{H^1(#1)}
\newcommand\ghu[1]{{\frak h}^1(#1)}
\newcommand\ghuI[1]{{\frak h}_I^1(#1)}
\newcommand\gbmo[1]{\frb\frm\fro(#1)}
\newcommand\wh{\widehat}
\newcommand\wt{\widetilde}
\newcommand\whH{\widehat{\phantom{G}}\hbox to 0pt{\hss $H$}}
\newcommand\emspace{\hbox to 6pt{\hss}}
\newcommand\ds{\displaystyle}
\newcommand\rmi{\hbox{\rm (i)}}
\newcommand\rmii{\hbox{\rm (ii)}}
\newcommand\rmiii{\hbox{\rm (iii)}}
\newcommand\rmiv{\hbox{\rm (iv)}}
\newcommand\rmv{\hbox{\rm (v)}}
\newcommand\ioty{\int_0^{\infty}}
\newcommand\One{{\mathbf{1}}}
\newcommand\OU{Ornstein--Uhlenbeck}
\newcommand\e{\mathrm{e}}
\DeclareSymbolFont{EUEX}{U}{euex}{m}{n}
\DeclareSymbolFont{euexlargesymbols}{U}{euex}{m}{n}
\DeclareMathSymbol{\intop}{\mathop}{euexlargesymbols}{"52}
     \def\int{\intop\nolimits}
\DeclareSymbolFont{euexsymbols}     {U}{euex}{m}{n}
\DeclareMathSymbol{\smallint}{\mathop}{euexsymbols}{"52}
\begin{document}

\title[Spaces of Goldberg type]
{Spaces of Goldberg type \\
\vskip0.3cm
on certain measured metric spaces}

\subjclass[2000]{} 

\keywords{Local Hardy space, atom, noncompact manifolds, Riesz transforms.}

\thanks{Work partially supported by PRIN 2010 ``Real and complex manifolds: 
geometry, topology and harmonic analysis". 
}

\author[S. Meda and S. Volpi]
{Stefano Meda and Sara Volpi}


\address{Stefano Meda: 
Dipartimento di Matematica e Applicazioni
\\ Universit\`a di Milano-Bicocca\\
via R.~Cozzi 53\\ I-20125 Milano\\ Italy
\hfill\break 
stefano.meda@unimib.it}

\address{Sara Volpi: 
Dipartimento di Matematica e Applicazioni
\\ Universit\`a di Milano-Bicocca\\
via R.~Cozzi 53\\ I-20125 Milano\\ Italy
\hfill\break 
s.volpi@yahoo.it}



\maketitle

\begin{abstract}
In this paper we define a space $\ghu{M}$ of Hardy--Goldberg type on a measured
metric space satisfying some mild conditions.  We prove that the dual of $\ghu{M}$
may be identified with $\gbmo{M}$, a space of functions with ``local''
bounded mean oscillation, and that if $p$ is in $(1,2)$, then $\lp{M}$ is
a complex interpolation space between $\ghu{M}$ and $\ld{M}$.
This extends previous results
of Strichartz, Carbonaro, Mauceri and Meda, and Taylor.  
Applications to singular integral operators on Riemannian manifolds are given.   
\end{abstract}

\setcounter{section}{0}
\section{Introduction}\label{s: Introd}

This paper focuses on the study of spaces of Hardy--Goldberg type on certain 
measured metric spaces.  Our goal is twofold: on the one hand we aim at
extending previous work on the subject by R.S. Strichartz \cite{Str}, 
A.~Carbonaro, G.~Mauceri and Meda \cite{CMM1,CMM2}, 
and M.~Taylor \cite{T2}.  On the other hand, our results pave the
way to further developments concerning Riesz
transforms on a certain class of noncompact Riemannian manifolds, 
that will appear in a forthcoming paper \cite{CMV}.   

Strichartz worked on compact Lie groups; some of his far reaching ideas 
have been subsequently developed by Taylor to successfully extend Strichartz's results 
to the setting of Riemannian manifolds with strongly bounded geometry. 
A comparison between the results contained in \cite{CMM1} and \cite{T2} may help understanding
our motivations and contributions.

In \cite{CMM1} the authors consider a metric measured space $(M,\mu,d)$
satisfying three conditions: the
approximate midpoint property (AMP), the local doubling condition (LDC) and Cheeger's
isoperimetric property (IP) (see Section~\ref{sec:1} for the definitions).  
The AMP is a very mild assumption, very often satisfied,
the LDC is a very natural assumption for the applications we have in mind to 
Riemannian manifolds, whereas
the IP is a comparatively restrictive assumption, for it implies that the volume growth
of $M$ be at least exponential \cite[Proposition~3.1~\rmi]{MMV1}.  
In this setting the authors introduce an atomic Hardy space $\hu{M}$,   
identify the dual space of $\hu{M}$ with $BMO(M)$ (suitably defined), 
and prove that if $p$ is in $(2,\infty)$, then $\lp{M}$ is an
interpolation space between $\ld{M}$ and $BMO(M)$.  
Also, applications to spectral multipliers and Riesz transforms are given.  
It is important to keep in mind that 
atoms are functions in $\ld{M}$, with support contained in balls of radius at most $1$, say, 
satisfying the standard size estimate and cancellation property (the same 
as those satisfied by atoms in the classical Hardy space $\hu{\BR^n}$).  

In \cite{T2}, Taylor works on a Riemannian manifold $M$ of bounded geometry in a
very strong sense, which requires a uniform local control of all derivatives of the metric tensor
in exponential co-ordinates around each point, but 
a mild control on the volume growth of the manifold.  He defines a 
local Hardy space $\ghu{M}$, which is a direct generalisation of the classical
local Hardy space $\ghu{\BR^n}$, introduced by Goldberg \cite{G}, and of the extension thereof
to compact Lie groups by Strichartz.  Taylor defines $\ghu{M}$
via a suitable grand maximal function, identifies the dual space of $\ghu{M}$ with 
$\gbmo{M}$ (suitably defined), and proves that if $p$ is in $(2,\infty)$, then $\lp{M}$ is an
interpolation space between $\ld{M}$ and $\gbmo{M}$.  Applications to a wide class
of pseudo-differential operators are provided.  
Taylor also proves that $\ghu{M}$ has an atomic decomposition, whose atoms are either 
atoms in $\hu{M}$ (in the sense of \cite{CMM1}), or functions in $\ld{M}$, 
supported in a ball of radius exactly equal to $1$ and satisfying the standard 
size condition, but possibly not the cancellation condition.  
One of the limitations of this approach is that the geometric assumptions on the 
Riemannian manifolds, are, as mentioned above, quite stringent.  One of its advantages
is that it reduces any estimate involving $\ghu{M}$ to corresponding local estimates for $\ghu{\BR^n}$.

It is worth observing that each of the spaces $\hu{M}$ and $\ghu{M}$ has its own
adavantages and range of applications.  Clearly $\ghu{M}$ is a flexible space that is preserved
by the action of suitable classes of pseudo-differential operators \cite{G,Str,T2}.  However,
it is not apt to obtaining endpoint estimates for certain singular integral operators 
like, for instance, the purely imaginary powers of the translated \OU\ operator 
\cite{CMM3}, where $\hu{M}$ functions perfectly.  

As mentioned above, one of the motivation of our work is 
to extend considerably the range of applicability
of the approach of Strichartz and Taylor.  
Our ambient space is a measured metric space possessing AMP and LDP.
It is well known that the assumptions
above are satisfied whenever $M$ is a Riemannian manifold with Ricci curvature bounded
from below (without assuming that $M$ has positive injectivity radius),
a condition that does not require any control on the derivatives of the metric tensor.  
Note that such manifolds may have exponential volume growth, so that they may
not be homogeneous spaces in the sense of Coifman--Weiss. 
Note that we do not assume that $M$ possesses the so called uniform ball size condition, i.e., 
it may happen that $\inf \, \{\mu(B): r_B=r\} = 0$
and $\sup \, \{\mu(B): r_B=r\} = +\infty$ for each $r>0$.  

We emphasize the fact that our methods are quite different
from those of Taylor, for we cannot reduce the analysis to 
that of Goldberg on Euclidean spaces.  
We give an atomic definition of $\ghu{M}$: when $M$ is a manifold of strongly bounded
geometry, $\ghu{M}$ agrees with the space defined by Taylor.  
%
We prove that the topological dual of $\ghu{M}$ may be identified with 
a local space $\frb\frm\fro (M)$ of functions of 
bounded mean oscillation in an appropriate sense (see Sections~\ref{sec:4} and
\ref{sec:5}), 
and that if $p \in (1,2)$, then $\lp{M}$ is
a complex interpolation space between $\ghu{M}$ and $\ld{M}$ (see Section~\ref{sec:7}).
Applications to the study
of the translated Riesz transform and of spectral multipliers of the Laplace--Beltrami
operator on manifolds with Ricci curvature bounded from below 
will be given in Section~\ref{sec:9}.

Finally, a few words concerning our second goal.
A basic question concerning the Riesz transform $\cR = \nabla\cL^{-1/2}$ (here $\cL$ denotes the 
Laplace--Beltrami operator on $M$) is to characterise the space $H^1_{\cR}(M)$ of all functions 
$f$ in $\lu{M}$ such that $\bigmod{\cR f}$ is in $\lu{M}$.  
In many cases, for instance in $\BR^n$, such space is just the Hardy space $\hu{\BR^n}$.
Recent results of Mauceri, Meda and M.~Vallarino \cite{MMV4} show that if $\BD$
denotes the hyperbolic disc, then $H^1_{\cR}(\BD)$ is \emph{not} $\hu{\BD}$.   
The analysis of $\ghu{M}$ made in this paper will be the key to provide 
a characterisation of $H_\cR^1(M)$ for a comparatively large class of Riemannian manifolds.  

We will use the ``variable constant convention'', and denote by $C$,
possibly with sub- or superscripts, a constant that may vary from place to
place and may depend on any factor quantified (implicitly or explicitly)
before its occurrence, but not on factors quantified afterwards.

For each $p$ in $[1,\infty]$, we denote by $p'$ the index conjugate to $p$, i.e. $p' =  p/(p-1)$.

\section{Notation, terminology and geometric assumptions}\label{sec:1}

Suppose that $(M,d,\mu)$ is a measured metric space,
and denote by $\cB$ the family of all balls on $M$.
We assume that $\mu(M) > 0$ and that every ball has finite measure.
For each $B$ in $\cB$ we denote by $c_B$ and $r_B$
the centre and the radius of $B$ respectively.  
Furthermore, we denote by $k B$ the ball with centre $c_B$ and radius $k r_B$.  For each $s$ in $\BR^+$, 
we denote by $\cB_s$ the family of all balls $B$ in $\cB$ such that $r_B \leq s$.

\medskip
We say that $M$ possesses the \emph{local doubling property} (LDP)
if for every $s$ in $\BR^+$ there exists a constant $D_s$ 
such that
\begin{equation*}  \label{f: LDC} 
\mu \bigl(2 B\bigr)
\leq D_s \, \mu  \bigl(B\bigr)
\quant B \in \cB_s.
\end{equation*}

\begin{remark} \label{r: geom I}
The LDP implies that for each $\tau \geq 1$
and for each $s$ in $\BR^+$
there exists a constant~$C$ such that
\begin{equation} \label{f: doubling Dtau}
\mu\bigl(B'\bigr)
\leq C \, \mu(B)
\end{equation}
for each pair of balls $B$ and $B'$, with $B\subset B'$,
$B$ in $\cB_s$, and $r_{B'}\leq \tau \, r_B$.
We shall denote by $D_{\tau,s}$ the smallest constant for
which (\ref{f: doubling Dtau}) holds.
In particular, if (\ref{f: doubling Dtau}) holds (with the same constant)
for all balls $B$ in $\cB$, then $\mu$ is doubling and
we shall denote by $D_{\tau,\infty}$ the smallest constant for
which (\ref{f: doubling Dtau}) holds.
\end{remark}

We say that $M$ possesses the \emph{approximate midpoint property} (AMP) if
there exist $R_0$ in $[0,\infty)$ and 
$\be$ in $[1/2,1)$ such that for every pair of points~$x$ and $y$
in $M$ with $d(x,y) > R_0$ there exists a point $z$ in $M$ such that
$d(x,z) < \be\, d (x,y)$ and $d(y,z) < \be\, d (x,y)$.
This is clearly equivalent to the requirement that there exists a
ball~$B$ containing~$x$ and $y$ such that $r_B < \be \, d(x,y)$.

If $M$ is a measured metric space for which $R_0 =0$ and each segment has a midpoint,
then we say that $M$ possesses the midpoint property (MP).
Typically graphs enjoy the AMP, but quite often a ``segment'' in a graph has not a midpoint.
On the other hand, every connected Riemannian manifold possesses the~MP, and the constant
$R_0$ is equal to $0$.   

All the results in this paper hold under the assumption that $M$ possesses
the local doubling property LDP and the approximate midpoint property AMP.  However,
for the sake of simplicity, \textbf{hereafter we assume that $M$ possesses
the local doubling property LDP and the midpoint property MP} (with $R_0 = 0$).  This leads
to cleaner statements, and allows us to avoid certain annoying technicalities, which
makes the reading more difficult.  The interested reader may easily fill the additional details
and come to prove our results under the assumption that $M$ satisfies the AMP only.  
To this end, \cite{CMM1} may serve as a guide, for the details of proofs therein are
done under the assumption that $M$ possesses the AMP only.  

Given a positive number $\eta$,
a set $\fM$ of points in $M$ is a \emph{$\eta$-discretisation} of $M$
if it is maximal with respect to the following property:
$$
\min\{d(z,w): z,w \in \fM, z \neq w \} >\eta\quad
\hbox{and}\quad d(\fM, x) \leq \eta 
\quant x \in M.  
$$
It is straightforward to show that $\eta$-discretisations exist
for every $\eta$.  For each subset $E$ of $M$, we set 
$$
\fM_E
:=   \big\{z \in \fM: B_{2\eta}(z) \cap E \neq \emptyset \big\},
$$
and denote by $\sharp \fM_E$ its cardinality.
If $x$ is a point in $M$, we write $\fM_x$ instead of $\fM_{\{x\}}$, for simplicity.  
Note that $\sharp\fM_x$ is the number of balls of the covering $\set{B_{2\eta}(z): z \in \fM }$
that contain $x$.  

\begin{lemma}\label{l:bolle}
Suppose that $M$ possesses the LDP and the MP (with $R_0=0$, see, however, the remark before the definition
of discretisations).
Assume that $c$ is a positive number and that $\fM$ is a $c/2$-discretisation.
The following hold:
\begin{itemize}
\item[\itemno1] 
the family $\set{B_c(z): z\in \fM}$ is a locally uniformly finite
covering of $M$, and there exists a constant $C$, depending on $c$, such that 
$
\sup_{x \in M} \, \sharp\fM_x 
\leq C;
$
\item[\itemno2] 
for every $b > c$ there exists a constant $C$, which depends on $b$ and $c$,
such that 
$
\sharp \fM_B 
\leq C  
$
for every ball $B$ of radius $b$.
\end{itemize}
\end{lemma}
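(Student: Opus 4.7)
The plan is to exploit the two defining features of a $c/2$-discretisation $\fM$—namely the pairwise separation $d(z,w) > c/2$ for distinct $z,w \in \fM$ and the cover property $d(\fM, x) \leq c/2$ for every $x \in M$—together with the volume comparison \pref{f: doubling Dtau} from Remark~\ref{r: geom I}. The guiding idea is that the balls $B_{c/4}(z)$, $z \in \fM$, are pairwise disjoint, so a packing argument produces the desired cardinality bounds provided their measures can be compared with the measure of a suitable enclosing ball via the local doubling property.

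For \rmi, the covering property is immediate: for every $x \in M$ there exists $z \in \fM$ with $d(x,z) \leq c/2 < c$, so $x \in B_c(z)$. To bound $\sharp\fM_x$, I would fix distinct points $z_1,\dots,z_n$ of $\fM$ with $x \in B_c(z_i)$ for every $i$, and note that $z_i \in B_c(x)$, so the disjoint balls $B_{c/4}(z_i)$ are all contained in $B_{5c/4}(x)$. Applying \pref{f: doubling Dtau} with the small ball $B_{c/4}(z_i)$ inside the large ball $B_{5c/4}(x)$ (ratio of radii equal to $5$, small radius equal to $c/4$), I obtain $\mu\bigl(B_{5c/4}(x)\bigr) \leq D_{5,c/4} \, \mu\bigl(B_{c/4}(z_i)\bigr)$ for each $i$. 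Summing and using disjointness then gives
$$
n \, \mu\bigl(B_{5c/4}(x)\bigr) \leq D_{5,c/4} \sum_{i=1}^n \mu\bigl(B_{c/4}(z_i)\bigr) \leq D_{5,c/4} \, \mu\bigl(B_{5c/4}(x)\bigr),
$$
and hence $n \leq D_{5,c/4}$, a constant that depends only on $c$.

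For \rmii, the strategy is the same. Fix a ball $B$ of radius $b$ and centre $c_B$. Every $z \in \fM_B$ satisfies $B_c(z) \cap B \neq \emptyset$, hence $d(z,c_B) < b + c$, so the pairwise disjoint balls $\set{B_{c/4}(z): z \in \fM_B}$ all sit inside $B_{b + 5c/4}(c_B)$. Applying \pref{f: doubling Dtau} with small radius $c/4$ and ratio $\tau = (4b + 5c)/c$ yields $\mu\bigl(B_{b+5c/4}(c_B)\bigr) \leq D_{\tau, c/4} \, \mu\bigl(B_{c/4}(z)\bigr)$ for each such $z$, and the same packing estimate produces $\sharp\fM_B \leq D_{\tau, c/4}$, a constant that depends only on $b$ and $c$. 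The argument is almost purely geometric and presents no real obstacle; the only point requiring care is the choice of the auxiliary radius $c/4$ (any radius strictly less than $c/2$ would do), which simultaneously guarantees disjointness of the small balls and allows a clean application of the local doubling property.
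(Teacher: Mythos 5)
Your proof is correct and uses essentially the same packing argument as the paper: pack the pairwise disjoint balls $B_{c/4}(z)$ into a slightly larger ball and compare volumes via the local doubling property. The only cosmetic differences are that you use tighter enclosing radii ($5c/4$ and $b+5c/4$ instead of $2c$ and $b+2c$) and, in part \rmii, you run the packing argument directly rather than first invoking the bounded-multiplicity estimate from \rmi\ and integrating; both routes yield the same type of constant.
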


\begin{proof}
First we prove \rmi. 
Since $\fM$ is a $c/2$-discretisation, $d(\fM,x)\leq c/2$ for every $x$ in $M$, so that 
$\set{B_c(z): z \in \fM}$ is a covering of $M$.  
Observe that if $z$ is in $\fM_x$, then 
$
B_c(z) \subset B_{2c}(x) \subset B_{3c}(z).
$ 
This and the LDP \eqref{f: doubling Dtau}  imply that 
$$
\mu\big(B_{2c}(x)\big) \leq \mu\big(B_{3c}(z)\big) \leq D_{12,c/4}\, \mu\big(B_{c/4}(z)\big).
$$
Since $B_{c/4}(z) \subset B_{2c}(x)$ and the balls of the family $\set{B_{c/4}(z): 
z \in \fM}$ are pairwise disjoint, 
\begin{align*}
\mu\big(B_{2c}(x)\big) 
& \geq \mu \big(\bigcup_{z\in \fM_x} B_{c/4}(z)\big)  \\ 
& =    \sum_{z\in \fM_x}\mu\big(B_{c/4}(z)\big)  \\
& \geq \frac{\sharp\fM_x}{D_{12,c/4}}\,\mu\big(B_{2c}(x)\big),  
\end{align*}
whence $\sharp\fM_x \leq D_{12,c/4}$, as required. 

\medskip
Now we prove \rmii. 
Denote by $B'$ the ball with centre $c_B$ and radius $b+2c$.  Observe that 
if $z$ is in $\fM_B$, and $x$ belongs to $B_c(z)$, then $d(x, c_B) < b+2c$.  
Therefore $x$ is in $B'$.   This and \rmi\ imply that 
$$
\sum_{z\in \fM_B} \One_{B_c(z)}
\leq D_{12,c/4}\, \One_{B'}. 
$$
By integrating both sides of this inequality, we see that
$$
\sum_{z\in \fM_B} \mu\big(B_{c/4}(z) \big)
\leq \sum_{z\in \fM_B} \mu\big(B_c(z) \big)\leq D_{12,c/4}\, \mu(B').
$$
Recall that the balls $B_{c/4}(z)$, $z \in \fM_B$, are
pairwise disjoint, and that $\mu\big(B_{c/4}(z)\big) \geq D_{4(b/c)+8}^{-1} \, \mu(B')$
by the LDP, so that 
$$
\sharp\fM_B \,D_{4(b/c)+8}^{-1} \, \mu(B') 
\leq D_{12,c/4}\, \mu(B'),
$$
from which the required estimate follows directly.  
\end{proof}

\begin{remark} \label{r: constants}
A careful examination of the proof of Lemma~\ref{l:bolle} reveals that, in fact, 
we have proved the following: $\sup_{x\in M} \, \sharp\fM_x \leq D_{12,c/4}$ and 
$\sharp \fM_B \leq D_{4(b/c)+8}D_{12,c/4}$ (see Remark~\ref{r: geom I} for the definition of $D_{\tau,s}$).
We have made here the choice not to keep track of the precise dependence of the constants appearing
in the statement from the various parameters.  We shall do the same in all the subsequent sections. 
\end{remark}

\section{The local Hardy space $\frh^1(M)$} \label{sec:2}

\begin{definition} \label{d: atom}
Suppose that $p$ is in $(1,\infty]$ and let $p'$ denote the index conjugate to $p$. 
Suppose that $b$ is a positive number.
A \emph{standard $p$-atom at scale $b$}
is a function $a$ in $L^1(M)$ supported in a ball $B$ in $\cB_b$
satisfying the following conditions:
\begin{enumerate}
\item[\itemno1] \emph{size condition}:\\
$\norm{a}{\infty}\leq \mu (B)^{-1}$ if $p=\infty$ and 
$\norm{a}{p}  \leq \mu (B)^{-1/p'}$ if $p\in(1, \infty)$;
\item[\itemno2] \emph{cancellation condition}:\\
$\ds \int_B a \wrt \mu  = 0$. 
\end{enumerate}
A \emph{global $p$-atom at scale $b$} is a function $a$
in $L^1(M)$ supported in a ball $B$ of radius \emph{exactly equal to} $b$ 
satisfying the size condition above (but possibly not the cancellation condition).
Standard and global $p$-atoms will be referred to simply as $p$-\emph{atoms}.
\end{definition}

\begin{definition} \label{d: Goldberg}
Let $b$ be a positive number. 
The \emph{local atomic Hardy space} $\frh^{1,p}_b({M})$ is the 
space of all functions~$f$ in $L^1(M)$
that admit a decomposition of the form
\begin{equation} \label{f: decomposition}
f = \sum_{j=1}^\infty \la_j \, a_j,
\end{equation}
where the $a_j$'s are $p$-atoms at scale $b$
and $\sum_{j=1}^\infty \mod{\la_j} < \infty$.
The norm $\norm{f}{\frh^{1,p}_b}$
of $f$ is the infimum of $\sum_{j=1}^\infty \mod{\la_j}$
over all decompositions (\ref{f: decomposition}) of $f$.
\end{definition}

We shall prove that $\frh^{1,p}_b(M)$ is independent of $p$ and $b$, and later the space
$\frh^{1,p}_1(M)$ will be denoted simply by $\ghu{M}$.

The following lemma produces an economical decomposition
of atoms supported in ``big'' balls as finite linear combinations
of atoms supported in smaller balls. 
This result extends to global atoms the economical decomposition for 
standard atoms proved in \cite[Lemma~6.1]{MMV3}; see also \cite[Prop~4.3 (i)]{CMM1}
for a ``less economical'' decomposition. 
It is worth observing that our proof does not require the uniform ball size condition, 
which, instead, is used in \cite[Lemma~6.1]{MMV3}.  
Furthermore the proof of the following lemma is somewhat simpler than the proof of \cite[Lemma~6.1]{MMV3},
for we can decompose atoms supported in ``big'' balls as finite linear combinations of
\emph{global} atoms supported in smaller balls, so that we need not care about cancellations.  

\begin{lemma} \label{l: economical decomposition}
Suppose that $p$ is in $(1,\infty]$ and that $b>c>0$. 
Then each $p$-atom $a$ at scale $b$ may be written as a finite linear
combination of global $p$-atoms at scale~$c$, and there exists a constant $C$, independent of
the atom $a$, such that 
$
\norm{a}{\frh^{1,p}_c} 
\leq C.  
$
\end{lemma}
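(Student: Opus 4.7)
The plan is to employ a partition of unity subordinate to a covering of $M$ by balls of radius $c$, together with the local doubling property to control the size of each piece. Fix a $c/2$-discretisation $\fM$ of $M$. By Lemma~\ref{l:bolle}~\itemno1, the family $\{B_c(z):z\in\fM\}$ is a locally uniformly finite covering of $M$, so I choose a subordinate partition of unity $\{\psi_z\}_{z\in\fM}$ with $0\le\psi_z\le 1$ and $\mathrm{supp}\,\psi_z\subset B_c(z)$. Denoting by $B$ the ball of support of $a$, I write
$$
a=\sum_{z\in\fM_B}\psi_z\,a,
$$
a finite sum, since $\psi_z a\equiv 0$ whenever $z\notin\fM_B$; moreover, $\sharp\fM_B$ is bounded by a constant $N=N(b,c)$ (after enclosing $B$ in a ball of radius $b$ if needed) by Lemma~\ref{l:bolle}~\itemno2.

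Each piece $\psi_z a$ is supported in $B_c(z)$, a ball of radius exactly $c$, so only the size condition needs to be checked, after a suitable scalar rescaling. Picking any $y\in B_c(z)\cap B$ for $z\in\fM_B$, the triangle inequality gives $B_c(z)\subset B_{r_B+2c}(c_B)$. In the principal case $r_B\ge c$, one has $r_B+2c\le 3r_B$ and $r_B\le b$, so (\ref{f: doubling Dtau}) with $\tau=3$ and $s=b$ yields
$$
\mu(B_c(z))\le\mu\bigl(B_{r_B+2c}(c_B)\bigr)\le D_{3,b}\,\mu(B).
$$
Combined with $\|\psi_z a\|_p\le\|a\|_p\le\mu(B)^{-1/p'}$, this shows that $g_z:=D_{3,b}^{-1/p'}\psi_z a$ is a global $p$-atom at scale $c$, and
$$
a=\sum_{z\in\fM_B}D_{3,b}^{1/p'}\,g_z
$$
realises $a$ as a finite linear combination of global $c$-atoms whose coefficients sum to at most $N\,D_{3,b}^{1/p'}=:C$.

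The remaining possibility is that $a$ is a standard $p$-atom at scale $b$ with $r_B<c$; but then $B\in\cB_c$, so $a$ is itself a standard $p$-atom at scale $c$, and $\|a\|_{\frh^{1,p}_c}\le 1$ follows at once from Definition~\ref{d: Goldberg}. The main technical point is therefore the comparison $\mu(B_c(z))\le D_{3,b}\mu(B)$ in the principal case: the hypothesis $r_B\ge c$ is precisely what places $B$ and each covering ball $B_c(z)$ at a bounded geometric ratio, so that the local doubling property applies uniformly with a constant depending only on $b$. A naive attempt to use the same partition-of-unity scheme for very small balls $B$ produces unbounded coefficients, and it is exactly the scale-matching imposed by $r_B\ge c$ that makes the argument work.
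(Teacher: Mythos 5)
Your argument is correct and rests on the same device as the paper's proof: a partition of unity subordinate to the covering $\{B_c(z):z\in\fM\}$ attached to a $c/2$-discretisation, so that each piece $\psi_z a$ is supported in a ball of radius exactly $c$ and, being a global atom after rescaling, requires no cancellation. The differences are in the bookkeeping, and are worth recording. To sum the coefficients the paper takes $\la_z=\norm{a\psi_z}{p}\,\mu(B_c(z))^{1/p'}$ and applies H\"older's inequality, so that $\sum_z\mod{\la_z}\le\norm{a}{p}\bigl(\sum_z\mu(B_c(z))\bigr)^{1/p'}$; you instead bound each coefficient individually by $D_{3,b}^{1/p'}$ and multiply by $\sharp\fM_B$, which works because $\sharp\fM_B$ is uniformly bounded by Lemma~\ref{l:bolle}~\rmii. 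Both routes give a constant depending only on $b$, $c$ and $p$. Your explicit split according to whether $r_B\ge c$ or $r_B<c$ is a genuine gain in precision: the comparison $\mu(B_c(z))\le C\,\mu(B)$ with $C=C(b,c)$ is only available when $r_B$ is bounded below (the doubling ratio $(r_B+2c)/r_B$ must stay bounded), and for a standard atom supported in a very small ball a bounded-coefficient decomposition into global $c$-atoms need not exist at all (already on $\BR^n$ the $L^p$ size condition forbids it); the right observation there is yours, namely that such an atom is already a standard $p$-atom at scale $c$. The one harmless discrepancy with the literal wording of the statement is that in this last case you obtain the norm bound $\norm{a}{\frh^{1,p}_c}\le 1$ rather than a combination of global $c$-atoms; since every later invocation of the bounded-coefficient global decomposition concerns atoms whose supporting ball has radius equal to the scale, which fall under your principal case, nothing is lost.
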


\begin{proof}
Suppose that $a$ is a $p$-atom at scale~$b$ (either standard or global), supported
in the ball $B$, and denote by $\fM$ a $c/2$-discretisation of $M$. 
We denote by $B_1,\ldots, B_N$ the balls 
with centre at points in $\fM_B$ and radius $c$, and define
$$
\psi_j 
:= \frac{\One_{B_j}}{\sum_{k=1}^{N} \One_{B_k}}.
$$
Clearly $\sum_{k=1}^N \, \psi_j$ is equal to $1$ on $B$.  
Set 
$\la_j:=\norm{a\psi_j}{p}\, \mu (B_j)^{1/p'}$,  $b_j := a\psi_j \, \la_j^{-1}$,  
and write
$
a 
= \sum_{j=1}^N \psi_j \, a
= \sum_{j=1}^N  \la_j\, b_j. 
$
Clearly $b_j$ is a global $p$-atom at scale $c$, whence
$$
\begin{aligned}
\norm{a}{\frh^{1,p}_c} 
& \leq \sum_{j=1}^N  \mod{\la_j} \\
& =    \sum_{j=1}^N \norm{a\psi_j}{p}\,   \mu (B_j)^{1/p'}  \\ 
& \leq \Big[\sum_{j=1}^N \normto{a\psi_j}{p}{p}\Big]^{1/p} \,   
       \Big[\sum_{j=1}^N \mu (B_j)  \Big]^{1/p'};
\end{aligned}
$$
we have used H\"older's inequality with exponents $p$ and $p'$ in the last
inequality.  Observe that the balls $B_j$ are contained in the ball with
centre $c_B$ and radius $b+2c$.  Since, by Lemma~\ref{l:bolle}~\rmi\,
each point in $B_{b+2c}(c_B)$ is covered by at most $C$ balls $B_j$, with $C$
depending only on~$c$,  
$$
\sum_{j=1}^N \mu (B_j) 
\leq C \,  \mu\big(B_{b+2c}(c_B)\big).  
$$ 
Similarly, 
$$
\begin{aligned}
\sum_{j=1}^N \normto{a\psi_j}{p}{p}
& = \int_M \sum_{j=1}^N \, \psi_j^p\,  \mod{a}^p \wrt \mu \\ 
& \leq  \, \normto{a}{p}{p}  \\  
& \leq  \, \mu(B)^{-p/p'};
\end{aligned}
$$
we have used the fact that $0\leq \psi_j\leq 1$, that $p>1$, and that $\sum_{k=1}^N \psi_j = 1$ on $B$
in the first inequality above and the size condition of the $p$-atom $a$ in the second.  
By combining the preceding estimates, we obtain that there exists a constant $C$, depending on $c$
and on $p$, such that 
$$
\norm{a}{\frh^{1,p}_c} 
\leq C \, \mu(B)^{-1/p'} \,  \mu\big(B_{b+2c}(c_B)\big)^{1/p'}.
$$
Since $B$ and $B_{b+2c}(c_B)$ have the same centre, 
$$
\mu\big(B_{b+2c}(c_B)\big) 
\leq D_{1+2(c/b),b} \, \mu(B),
$$ 
whence $\norm{a}{\frh^{1,p}_c} \leq C \, D_{1+2(c/b),b}^{1/p'}$, as required.  
\end{proof}

\begin{proposition}\label{p:b}
Suppose that $p$ is in $(1,\infty]$ and that $b>c>0$. 
A function $f$ is in $\frh^{1,p}_c(M)$ if and only if $f$ is in $\frh^{1,p}_b(M)$. 
Furthermore there exist positive constants $C_1$ and $C_2$, depending on $b$, $c$ and $p$, such that  
$$
C_1\, \norm{f}{\frh^{1,p}_b} 
\leq \norm{f}{\frh^{1,p}_c} 
\leq C_2 \,  \norm{f}{\frh^{1,p}_b} 
\quant f \in \frh^{1,p}_c(M).
$$
\end{proposition}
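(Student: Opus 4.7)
The plan is to establish the two continuous inclusions $\frh^{1,p}_b(M) \hookrightarrow \frh^{1,p}_c(M)$ and $\frh^{1,p}_c(M) \hookrightarrow \frh^{1,p}_b(M)$ separately, each corresponding to one of the claimed inequalities.

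For the first inclusion I would appeal directly to Lemma~\ref{l: economical decomposition}: given an atomic decomposition $f = \sum_{j} \la_j \, a_j$ of $f$ in $\frh^{1,p}_b(M)$, I would replace each atom $a_j$ by the finite linear combination of global $p$-atoms at scale $c$ supplied by that lemma. The resulting decomposition of $f$ has coefficients whose $\ell^1$-norm is bounded by a constant (depending only on $b$, $c$ and $p$) times $\sum_j \mod{\la_j}$, so taking the infimum yields $\norm{f}{\frh^{1,p}_c} \leq C \, \norm{f}{\frh^{1,p}_b}$.

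For the second inclusion the strategy is to show that every $p$-atom $a$ at scale $c$ is itself a bounded multiple of a $p$-atom at scale $b$. A standard atom at scale $c$ is supported in a ball $B$ with $r_B \leq c \leq b$, so $B \in \cB_b$, and the size and cancellation conditions carry over verbatim, giving $\norm{a}{\frh^{1,p}_b} \leq 1$. A global atom at scale $c$ presents the only genuine subtlety: its support $B$ has radius exactly $c$, which is not the correct radius for a global atom at scale $b$. I would address this by replacing $B$ with the concentric ball $\widetilde B := B_b(c_B)$ of radius exactly $b$ and rescaling $a$ by $\la := \bigl(\mu(\widetilde B)/\mu(B)\bigr)^{1/p'}$; a short check shows that $\la^{-1} a$ then satisfies the size condition for a global $p$-atom at scale $b$ supported in $\widetilde B$. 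The key quantitative input is the LDP (in the form of Remark~\ref{r: geom I}), which gives $\la \leq D_{b/c,c}^{1/p'}$, yielding $\norm{a}{\frh^{1,p}_b} \leq D_{b/c,c}^{1/p'}$ uniformly in $a$.

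Combining these bounds term by term in an arbitrary atomic decomposition of $f \in \frh^{1,p}_c(M)$ at scale $c$ will give $\norm{f}{\frh^{1,p}_b} \leq C \, \norm{f}{\frh^{1,p}_c}$ and complete the argument. The only point requiring care is the rescaling step for global atoms, where one must verify that enlarging the support ball does not destroy the size condition; this is precisely where the local doubling property is essential, since without it the ratio $\mu(\widetilde B)/\mu(B)$ could be arbitrarily large and the norm bound would fail.
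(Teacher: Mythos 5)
Your proposal is correct and follows essentially the same two-step route as the paper: using Lemma~\ref{l: economical decomposition} for the inclusion $\frh^{1,p}_b(M) \hookrightarrow \frh^{1,p}_c(M)$, and a concentric-ball rescaling (controlled by the LDP constant $D_{b/c,c}^{1/p'}$) for the reverse inclusion. The only cosmetic difference is that you treat standard and global atoms at scale $c$ separately, whereas the paper applies the same rescaling $a \mapsto a\,[\mu(B)/\mu((b/c)B)]^{1/p'}$ to both cases at once; the resulting bound is identical.
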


\begin{proof}
We begin by showing that $\frh^{1,p}_c(M)\subset \frh^{1,p}_b(M)$, and that the left hand 
inequality holds.  If $a$ is a $p$-atom at scale $c$ with support contained in $B$, then 
$
\ds
{a} \, \Big[\frac{\mu (B)}{\mu \big((b/c)B\big)}\Big]^{1/p'} 
$
is a $p$-atom at scale $b$, and
$$
\begin{aligned}
\norm{a}{\frh^{1,p}_b} 
& \leq \Big[\frac{\mu \big((b/c)B\big)}{\mu (B)}\Big]^{1/p'} \\
& \leq D_{b/c,c}^{1/p'}.
\end{aligned}
$$
This implies that if $f$ belongs to $\frh^{1,p}_c(M)$, 
then $f$ is in $\frh^{1,p}_b(M)$ 
and $\norm{f}{\frh^{1,p}_b} \leq D_{b/c,c}^{1/p'} \, \norm{f}{\frh^{1,p}_c}$.

The reverse inclusion follows directly from Lemma~\ref{l: economical decomposition}.
\end{proof}

\begin{remark} \label{rem: independence h1}
Suppose that $p$ is in $(1,\infty]$. Then for every $b$ and $c$ such that 
$b>c>0$ the spaces $\frh^{1,p}_b(M)$ and $\frh^{1,p}_c(M)$ 
are isomorphic (in fact, they contain the same functions) by Proposition~\ref{p:b}. 
Hereafter we denote the space $\frh^{1,p}_1(M)$,
endowed with any of the equivalent norms defined above, simply by $\frh^{1,p}(M)$.
\end{remark}

In Section \ref{sec:5} we shall prove that $\frh^{1,p}(M)$ does not depend 
on the parameter $p$ in $(1, \infty)$, and then we shall denote all
the spaces $\frh^{1,p}(M)$ simply by $\frh^1(M)$.

\section{The local ionic space $\frh^1_I(M)$} \label{sec:3}

In this section we show that $\ghu{M}$ admits a ``ionic decomposition''.
Specifically, we shall define a ``ionic'' Hardy space $\ghuI{M}$. 
The space $\ghuI{M}$ is defined much as $\ghu{M}$, but with ions in place
of atoms. It will be clear from the definition that every atom is
an ion, but not conversely.  In fact, we shall consider a one-parameter
family of different types of ions.  When this parameter is equal to one, 
and $M$ is a Riemannian manifold with strongly bounded geometry, 
then $\ghuI{M}$ is the local Hardy space introduced by Taylor in \cite{T2}.  

\begin{definition} \label{d: ion}
Suppose that $p$ is in $(1,\infty]$ and that $\al$ is a positive real number.  
A $(p,\al)$-\emph{ion} is a function $g$ in $L^1(M)$ supported in a ball~$B$ 
with the following properties:
\begin{enumerate}
\item[\itemno1]
$\norm{g}{\infty}\leq \mu (B)^{-1}$ if $p=\infty$ and 
$\norm{g}{p}  \leq \mu (B)^{-1/p'}$ if $p\in(1, \infty)$; 
\item[\itemno2]
$\Bigmod{\ds \int_B g \wrt \mu} \leq r_B^\al$.
\end{enumerate}
A $(p,1)$-ion will be simply called a $p$-ion.
\end{definition}

\noindent
Note that Taylor considered $\infty$-ions only. 

\begin{definition} \label{d: ionic}
Suppose that $b$ and $\al$ are positive real numbers.  
The \emph{local ionic Hardy space} $\frh^{1,p,\al}_{I,b}({M})$ is the 
space of all functions~$f$ in $L^1(M)$
that admit a decomposition of the form
\begin{equation} \label{f: ionicdecomposition}
f = \sum_{j=1}^\infty \mu_j \, g_j,
\end{equation}
where the $g_j$'s are $(p,\al)$-ions \emph{supported in balls of radius at most b} 
and $\sum_{j=1}^\infty \mod{\mu_j} < \infty$.
The norm $\norm{f}{\frh^{1,p,\al}_{I,b}}$
of $f$ is the infimum of $\sum_{j=1}^\infty \mod{\mu_j}$
over all decompositions (\ref{f: ionicdecomposition}) of $f$.

If $\al=1$, then we denote 
$\frh^{1,p,\al}_{I,b}({M})$ simply by $\frh^{1,p}_{I,b}({M})$.
\end{definition}

\noindent
We shall prove that the spaces $\frh^{1,p,\al}_{I,b}({M})$ do not depend on 
$\al$. Indeed, we shall show that all these spaces 
coincide with the atomic spaces $\frh^{1,p}_{b}(M)$ and that 
the corresponding norms are equivalent.  We shall make use of the following remark.

\begin{remark}\label{r:geq1}
If $\al \geq 1$, then it is easy to show that 
$\frh^{1,p,\al}_{I,b}({M}) \subset \frh^{1,p}_{I,b}({M})$ and 
$\norm{f}{\frh^{1,p}_{I,b}}\leq \norm{f}{\frh^{1,p,\al}_{I,b}}$
for every $f$ in $\frh^{1,p,\al}_{I,b}(M)$.

Indeed, consider a $(p,\al)$-ion $g$ supported in a ball $B$.
If $r_B\geq 1$, then the size condition implies that  
$$
\Bigl| \ds \int_B g \wrt \mu \Bigr| 
\leq \norm{g}{p} \, \mu (B)^{1/p'} \leq 1\leq r_B.
$$
If $r_B<1$, then 
$$
\Bigl| \ds \int_B g \wrt \mu \Bigr| \leq r_B^{\al}\leq r_B.
$$
Hence $g$ is a $p$-ion. 
The inclusion $\frh^{1,p,\al}_{I,b}({M})\subset \frh^{1,p}_{I,b}({M})$ 
and the desired norm inequality follow.
\end{remark}

\begin{theorem} \label{t: equiv ionicalpha}
Suppose that $p\in (1,\infty]$, $\al>0$ and $b>0$.
The spaces $\frh^{1,p,\al}_{I,b}({M})$ and $\frh^{1,p}_{b}({M})$ coincide. 
Furthermore, there exist a constant $C_1$, depending on $b$ and $\al$, and 
a constant $C_2$, depending on~$b$,~$\al$ and $p$, such that 
$$
C_1  \bignorm{f}{\frh^{1,p,\al}_{I,b}} 
\leq \bignorm{f}{\frh^{1,p}_{b}} 
\leq C_2 \, \bignorm{f}{\frh^{1,p,\al}_{I,b}} 
\quant f \in \frh^{1,p}_b(M).
$$
\end{theorem}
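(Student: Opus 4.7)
The plan is to establish the two inclusions $\frh^{1,p}_{b}(M)\subset \frh^{1,p,\al}_{I,b}(M)$ and $\frh^{1,p,\al}_{I,b}(M)\subset \frh^{1,p}_{b}(M)$ separately, together with the corresponding norm inequalities.

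For the first inclusion I would simply observe that every $p$-atom at scale $b$ is, up to a multiplicative constant depending only on $b$ and $\al$, a $(p,\al)$-ion. Standard atoms have vanishing integral and are automatically $(p,\al)$-ions. If $a$ is a global $p$-atom supported in a ball $B$ with $r_B=b$, then $\abs{\int a}\le \norm{a}{p}\mu(B)^{1/p'}\le 1$, so dividing $a$ by $\max(1,b^{-\al})$ yields a $(p,\al)$-ion. This gives the left-hand norm inequality with $C_1=\min(1,b^\al)$, which is independent of $p$.

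For the reverse inclusion it suffices to show that every $(p,\al)$-ion $g$ supported in a ball $B$ with $r_B\le b$ decomposes as a combination of $p$-atoms at scale $b$ whose $\frh^{1,p}_{b}$-norm is controlled by a constant depending only on $b$, $\al$, and $p$. I would first set $c:=\mu(B)^{-1}\int g$ and split
$$
g = g_1 + c\One_B, \qquad g_1:=g-c\One_B.
$$
Since $g_1$ has vanishing integral, is supported in $B\in\cB_b$, and
$$
\norm{g_1}{p}\le \norm{g}{p}+\abs{c}\mu(B)^{1/p}\le (1+r_B^\al)\mu(B)^{-1/p'}\le (1+b^\al)\mu(B)^{-1/p'},
$$
the function $g_1/(1+b^\al)$ is a standard $p$-atom at scale $b$. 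The remaining piece $c\One_B$, which has $L^1$-norm $\abs{c}\mu(B)=\abs{\int g}\le r_B^\al$, requires more care, and I would expand it by a telescoping argument along a chain of concentric balls. Letting $N$ be the largest integer with $2^N r_B\le b$, let $B_k$ be the ball concentric with $B=B_0$ of radius $2^k r_B$ for $k=0,\dots,N$ and let $B_{N+1}$ be the concentric ball of radius exactly $b$. The identity
$$
\frac{\One_{B_0}}{\mu(B_0)}=\frac{\One_{B_{N+1}}}{\mu(B_{N+1})}+\sum_{k=0}^{N}\Big(\frac{\One_{B_k}}{\mu(B_k)}-\frac{\One_{B_{k+1}}}{\mu(B_{k+1})}\Big),
$$
once multiplied through by $c\mu(B)$, expresses $c\One_B$ as $\abs{c\mu(B)}\le b^\al$ times the global $p$-atom $\One_{B_{N+1}}/\mu(B_{N+1})$ at scale $b$, plus a sum of $N+1$ telescoping differences. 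Each such difference has vanishing integral, is supported in $B_{k+1}$ with $r_{B_{k+1}}\le b$, and by the LDP its $p$-norm is bounded by $2D_{2,b}^{1/p'}\mu(B_{k+1})^{-1/p'}$, so it is a bounded multiple of a standard $p$-atom at scale $b$.

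The hard part will be to make the sum of contributions uniform in $r_B\in(0,b]$: the $N+1$ telescoping terms are weighted by $\abs{c\mu(B)}\le r_B^\al$ while $N\le\log_2(b/r_B)$, so their total is of order $r_B^\al\,(\log_2(b/r_B)+1)$, which is bounded on $(0,b]$ \emph{precisely because} $\al>0$. This is the only step where the positivity of $\al$ is essential, and it ties together the weak cancellation of the ion (providing the factor $r_B^\al$) with the length of the telescoping chain (providing the factor $\log(b/r_B)$). Combining this estimate with the decompositions of $g_1$ and of the leading global-atom term yields the right-hand norm inequality, with $C_2$ depending on $b$, $\al$, and $p$.
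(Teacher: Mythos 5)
Your proposal is correct and follows essentially the same route as the paper: the easy inclusion comes from checking that (multiples of) atoms are ions, and the reverse inclusion subtracts the mean to produce a standard atom plus a small multiple of $\One_B/\mu(B)$, which is then telescoped dyadically out to radius $b$, with the uniform bound coming from the fact that $r_B^\al\bigl(\log_2(b/r_B)+1\bigr)$ is bounded on $(0,b]$ for $\al>0$. Your only (harmless) deviation is to end the telescoping chain at a ball of radius exactly $b$, which avoids the paper's extra appeal to Proposition~\ref{p:b} to pass from scale $2b$ back to scale~$b$.
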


\begin{proof}
First we prove that $\frh^{1,p}_{b}({M}) \subset \frh^{1,p,\al}_{I,b}({M})$, 
by showing that each $p$-atom at scale $b$ is a multiple of a 
$(p,\al)$-ion supported in the same ball.
Indeed, clearly each standard $p$-atom is a $(p,\al)$-ion. 
Now, suppose that $a$ is a global $p$-atom supported in a ball $B$ of radius $b$.
Then the size condition implies that
\begin{equation}\label{e:1}
\Bigl| \ds \int_B a \wrt \mu \Bigr| \leq \bignorm{a}{p}\, \mu (B)^{1/p'} \leq 1.
\end{equation} 
If $b \geq 1$, then $\Bigl| \ds \int_B a \wrt \mu \Bigl| \leq b^{\al}$ 
and $a$ is a $(p,\al)$-ion at scale $b$.
If $b<1$, then it is clear that $b^{\al}\, a$
is a $(p,\al)$-ion at scale $b$. 
Therefore $\bignorm{a}{\frh^{1,p,\al}_{I,b}} \leq 1/b^{\al}$.
Thus, $\frh^{1,p}_b({M}) \subset \frh^{1,p,\al}_{I,b}(M)$ and 
$$
\bignorm{f}{\frh^{1,p,\al}_{I,b}} 
\leq \max(1, b^{-\al}) \,\bignorm{f}{\frh^{1,p}_b}
\quant f \in \frh^{1,p}_b({M}).
$$

To prove the reverse inclusion, let $g$ be a $(p,\al)$-ion with support contained in $B$, with $r_B\leq b$.
We write $g=a+h$, where 
$$
a
= g-\frac{\chi_B}{\mu(B)}\ds \int_B g \wrt \mu
\qquad\hbox{and} \qquad
h
= \frac{\chi_B}{\mu(B)}\ds \int_B g \wrt \mu.
$$
Observe that $a$ is a multiple of a standard $p$-atom at scale $b$. 
Indeed, $\int_B a \wrt \mu  = 0$ and 
\begin{align*}
\bignorm{a}{p} 
& \leq \bignorm{g}{p} + \Bigl|\int_B g \wrt \mu \Bigr| \, \, \frac{\bignorm{\chi_B}{p}}{\mu(B)} \\
& \leq \mu (B)^{-1/p'}+ r_B^{\al}\,  \mu (B)^{-1/p'} \\
& \leq (1+b^{\al})\, \mu (B)^{-1/p'},
\end{align*}
so that  $\bignorm{a}{\frh^{1,p}_b} \leq 1+b^{\al}$.
Now, if $r_B = b$, then 
$$
\begin{aligned}
\bignorm{h}{p}
& =      \mu(B)^{-1/p'} \, \Bigmod{\int_B g \wrt \mu} \\
& \leq   \mu(B)^{-1/p'} \, b^\al, 
\end{aligned}
$$
so that $b^{-\al} \, h$ is a global $p$-atom at scale $b$, whence
$\bignorm{h}{\frh^{1,p}_b} \leq b^\al$, and $\bignorm{g}{\frh^{1,p}_b} \leq 1+2b^{\al}$.  
If, instead, $r_B < b$, then we decompose $h$ as a finite combination of $\frh^{1,p}_b$-atoms
as follows.  
Set $N:=\left[\log_2(b/r_B)\right]$ and write $h=\sum_{i=1}^{N+2} h_i$, where
$$
h_i
= \Bigl[\frac{\chi_{2^{i-1}B}}{\mu(2^{i-1}B)}-\frac{\chi_{2^{i}B}}{\mu(2^{i}B)}
   \Bigr] \, \Big(\int_B g \wrt \mu\Big) \qquad i=1,...,N+1
$$
and
$
\ds h_{N+2}
= \frac{\chi_{2^{N+1}B}}{\mu(2^{N+1}B)}\, \Big(\int_B g \wrt \mu\Big).
$
A straightforward computation shows that for all $i=1,...,N+1$,
$$
\begin{aligned}
\int_M \Bigmod{\frac{\chi_{2^{i-1}B}}{\mu(2^{i-1}B)} 
-  \frac{\chi_{2^{i}B}}{\mu(2^{i-1}B)}}^p \wrt \mu
& =\frac{\mu(2^iB \setminus 2^{i-1}B)}{\mu (2^iB)^p}
   + \frac{\mu(2^{i-1}B)}{\mu (2^{i-1}B)^p} \\
& \leq \mu(2^iB)^{1-p} + \mu (2^{i-1}B)^{1-p} \\
& \leq 2 \, \mu (2^{i-1}B)^{1-p}.
\end{aligned}
$$
Therefore
\begin{align*}
\bignorm{h_i}{p} 
& \leq r_B^\al\, 2^{1/p} \,  \mu (2^{i-1}B)^{-1/p'}\\ 
& \leq r_B^\al\, 2^{1/p} \,  D_{2,b/2}^{1/p'}\, \mu (2^{i}B)^{-1/p'};
\end{align*}
the last inequality follows from the estimate $\mu\big(2^iB\big) \leq D_{2,b/2} \, \mu\big(2^{i-1}B\big)$. 
Since $2^{i}B \in \cB_b$ for all $i=1,...,N$, 
$
{h_i}/[2^{1/p} \, D_{2,b/2}^{1/p'} \, r_B^\al]
$
is a standard $p$-atom, so that 
$\bignorm{h_i}{\frh^{1,p}_b} \leq 2^{1/p} \, D_{2,b/2}^{1/p'} \, r_B^\al$.
 
Furthermore, the functions $h_{N+1}$ and $h_{N+2}$ are supported in 
the ball $2^{N+1}B$, which has radius $\leq 2b$.  Denote by $B'$ the ball with the same
centre as $B$ and radius $2b$.  Then 
$$
\begin{aligned}
\bignorm{h_{N+1}}{p}
& \leq 2^{1/p}\,r_B^{\al}\,\mu(2^{N+1}B)^{-1/p'} \\
& \leq 2^{1/p}\,r_B^{\al}\, \frac{\mu(B')^{1/p'}}{\mu(2^{N+1}B)^{1/p'}} \, \mu(B')^{-1/p'} \\
& \leq D_{2,2b}^{1/p'} \, 2^{1/p}\,r_B^{\al}\,  \mu(B')^{-1/p'},
\end{aligned}
$$
and, similarly, 
$
\bignorm{h_{N+2}}{p}\leq D_{2,2b}^{1/p'} \,r_B^{\al}\,\mu(B')^{-1/p'}.
$ 
Thus,
$$
\bignorm{h_{N+1} + h_{N+2}}{\frh^{1,p}_{2b}}
\leq \big(2^{1/p} + 1\big) \, r_B^\al.     
$$
Then, by Proposition~\ref{p:b}, there exists a constant $C$, depending on $b$ and $p$, such that 
$$
\bignorm{h_{N+1} + h_{N+2}}{\frh^{1,p}_{b}}
\leq C \, r_B^\al.     
$$
By combining these estimates wesee that there exists a constant $C$, which depends on $b$ and $p$ 
such that 
$$
\bignorm{h}{\frh^{1,p}_b}
\leq \sum_{i=1}^{N+2}\bignorm{h_i}{\frh^{1,p}_b}
\leq C \, r_B^\al \, \big[N + 1\big].
$$
Now, observe that 
$$
r_B^\al \, N 
= r_B^\al \, \log(b/r_B) 
\leq  r_B^\al \, \big[\log_2 b- \log r_B\big] 
\leq  r_B^\al \, \log_2 b 
\leq  b^\al \, \log_2 b.
$$
Hence
$
\bignorm{h}{\frh^{1,p}_b}
\leq C,
$
so that each $(\al,p)$-ion $g$ is in $\frh^{1,p}_b(M)$ and 
$\bignorm{g}{\frh^{1,p}}\leq C$, 
where the constant $C$ depends only on $b$, $\al$ and $p$, as required.
\end{proof}

We have already mentioned that the spaces $\frh^{1,p}_{b}(M)$ will be proved to be 
independent of the parameters $p$ and $b$.  Then, by Theorem~\ref{t: equiv ionicalpha},
for $p$ in $(1,\infty]$, $b>0$ and $\al$ in $\BR^+$, the spaces
$\frh^{1,p,\al}_{I,b}(M)$ coincide with equivalence of the norms.

\begin{remark}
We shall denote by $\frh^{1}_{I}(M)$ all the spaces 
$\frh^{1,p,\al}_{I,1}(M)$, endowed with 
any of the equivalent norms defined above. 
\end{remark}

\section{The space $\gbmo{M}$}
\label{sec:4}

Suppose that $q$ is in $[1,\infty)$ and $b$ is in $\BR^+$. 
For each locally integrable function $f$ define the 
\emph{local sharp maximal function} $f^{\sharp,q}_{b}$ by
$$
f^{\sharp,q}_{b}(x)
= \sup_{B \in \cB_{b}(x)} \Bigl(\frac{1}{\mu(B)}
\int_B \mod{f-f_B}^q \wrt\mu \Bigr)^{1/q}
\quant x \in M,
$$
where $f_B$ denotes the average of $f$ over $B$ and $\cB_{b}(x)$ denotes the family
of all balls in $\cB_{b}$ centred at the point $x$. 
Define also the \emph{modified local sharp maximal function} $N^q_b(f)$ by
$$
N^q_b(f)(x)
:= f^{\sharp,q}_{b}(x) +  \Bigl[\frac{1}{\mu(B_b(x))} 
     \int_{B_b(x)} \mod{f}^q \wrt\mu \Bigr]^{1/q}
\quant x \in M, 
$$
where $B_b(x)$ denotes the ball with centre $x$ and radius $b$.
Denote by $\mathfrak{bmo}^q_b(M)$ the space of all locally integrable 
functions~$f$ such that $N^q_b(f)$ is in $\ly{M}$, endowed with the norm 
$$
\bignorm{f}{\mathfrak{bmo}^q_b}
= \bignorm{N^q_b(f)}{\infty}.
$$

The space $\mathfrak{bmo}^q_b(M)$ is related to the space $BMO^q_b(M)$,
introduced in \cite{CMM1}.  The latter is the Banach space of all locally 
integrable functions $f$ (modulo constants) such that 
$$
\bignorm{f}{BMO^q_b}=\bignorm{f^{\sharp,q}_{b}}{\infty}< \infty.
$$
As shown in \cite{CMM1}, the spaces $BMO^q_b(M)$ do not depend on the 
parameters $q$ and $b$ and we denote them all by $BMO(M)$.

\begin{remark}
Given $f$ in $\mathfrak{bmo}^q_b(M)$, we have
$$
\bignorm{f^{\sharp,q}_{b}}{\infty}
\leq \bignorm{N^q_b(f)}{\infty}
=    \bignorm{f}{\mathfrak{bmo}^q_b}.
$$ 
Denote by $[f]$ the equivalence class in $BMO^q_b(M)$ which contains $f$.
By the estimate above, the linear map 
$\iota: \mathfrak{bmo}^q_b(M)\to BMO^q_b(M)$, defined by
$
\iota (f) = [f],
$
is continuous, i.e., 
\begin{equation} \label{f: iota}
\norm{\iota(f)}{BMO^q_b} \leq \norm{f}{\mathfrak{bmo}^q_b}  
\quant f \in \mathfrak{bmo}^q_b(M).
\end{equation}
\end{remark}

%

\noindent
In the following proposition we show that 
the space $\mathfrak{bmo}^q_b(M)$ does not depend on the parameters $b$ and $q$
in the appropriate ranges.

\begin{proposition}
Suppose that $q$ is in $[1, \infty)$ and that $b>c>0$. 
The following hold:
\begin{enumerate}
\item[\itemno1]
$\mathfrak{bmo}^q_b(M)$ and $\mathfrak{bmo}^q_c(M)$ coincide 
and their norms are equivalent;
\item[\itemno2]
$\mathfrak{bmo}^q_1(M)$ and $\mathfrak{bmo}^1_1(M)$ coincide and their norms are equivalent.  
\end{enumerate}
\end{proposition}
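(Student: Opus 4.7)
The plan is to prove (i) by two separate inclusions, and to reduce (ii) to the fact, established in \cite{CMM1}, that the spaces $BMO^q_b(M)$ do not depend on $q$.

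\textbf{Part (i).}  The inclusion $\mathfrak{bmo}^q_b(M) \subseteq \mathfrak{bmo}^q_c(M)$ is immediate: when $c<b$, the containment $\cB_c(x) \subset \cB_b(x)$ gives $f^{\sharp,q}_c \leq f^{\sharp,q}_b$ pointwise, and the LDP bounds the ratio $\mu(B_b(x))/\mu(B_c(x))$ by $D_{b/c,c}$ uniformly in $x$, which controls the $L^q$-average term.  For the reverse inclusion, I fix a ball $B \in \cB_b$ and a $c/2$-discretisation $\fM$ of $M$, so that $B \subseteq \bigcup_{z \in \fM_B} B_c(z)$.  Lemma~\ref{l:bolle} bounds $\sharp \fM_B$ and the local finiteness of this cover by constants depending only on $b$ and $c$.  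Summing the $L^q$-averages over the $B_c(z)$, each of which is at most $\bignorm{f}{\mathfrak{bmo}^q_c}^q$, and using the LDP bound $\mu(B_{r_B+2c}(c_B)) \leq C\, \mu(B)$, yields $\frac{1}{\mu(B)} \int_B \mod{f}^q \wrt\mu \leq C\, \bignorm{f}{\mathfrak{bmo}^q_c}^q$ for every $B \in \cB_b$.  Combined with the triangle inequality and the Jensen estimate $\mod{f_B} \leq \bigl(\frac{1}{\mu(B)}\int_B \mod{f}^q \wrt\mu\bigr)^{1/q}$, this controls $f^{\sharp,q}_b$ pointwise; the second summand of $N^q_b(f)$ follows by applying the covering argument to the specific ball $B_b(x)$.

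\textbf{Part (ii).}  The inequality $\bignorm{f}{\mathfrak{bmo}^1_1} \leq \bignorm{f}{\mathfrak{bmo}^q_1}$ is immediate from Jensen's inequality since $q \geq 1$.  For the reverse, I appeal to the $q$-independence of $BMO^q_1(M)$ proved in \cite{CMM1}, which, in view of \eqref{f: iota}, yields
$$
\bignorm{f^{\sharp,q}_1}{\infty} \leq C\, \bignorm{f^{\sharp,1}_1}{\infty} \leq C\, \bignorm{f}{\mathfrak{bmo}^1_1}.
$$
To bound the $L^q$-average term of $N^q_1(f)(x)$, I split
$$
\Bigl( \frac{1}{\mu(B_1(x))} \int_{B_1(x)} \mod{f}^q \wrt\mu \Bigr)^{1/q} \leq \Bigl( \frac{1}{\mu(B_1(x))} \int_{B_1(x)} \bigmod{f - f_{B_1(x)}}^q \wrt\mu \Bigr)^{1/q} + \mod{f_{B_1(x)}},
$$
estimate the first summand by $f^{\sharp,q}_1(x) \leq C\, \bignorm{f}{\mathfrak{bmo}^1_1}$, and bound the second by $\frac{1}{\mu(B_1(x))} \int_{B_1(x)} \mod{f} \wrt\mu \leq \bignorm{f}{\mathfrak{bmo}^1_1}$ via Jensen once more.

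The main obstacle is the John--Nirenberg inequality underlying the $q$-independence of $BMO^q_1(M)$; fortunately this is precisely the content of the relevant result in \cite{CMM1}, so once it is imported the remainder of the argument reduces to Jensen's inequality and the covering estimates of Lemma~\ref{l:bolle}.
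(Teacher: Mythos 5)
Your Part (ii) matches the paper's argument essentially line by line (Jensen for one direction; cite the $q$-independence of $BMO$ from \cite{CMM1}, then split the $L^q$-average into oscillation plus $\mod{f_{B_1(x)}}$, for the other). Your Part (i) takes a genuinely different route for the reverse inclusion: the paper invokes \cite[Prop~5.1]{CMM1} for the sharp part and handles the $N^q_0$ part by dualising against $L^{q'}$-functions, viewing the dual test function as a global $q'$-atom, and applying Lemma~\ref{l: economical decomposition}; you replace both steps with a single direct covering argument. That would be a pleasant, self-contained alternative if it were correct.

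However, there is a genuine gap. You assert that the covering argument yields
$\frac{1}{\mu(B)}\int_B\mod{f}^q\wrt\mu \leq C\,\bignorm{f}{\mathfrak{bmo}^q_c}^q$ for \emph{every} $B\in\cB_b$, and you then derive the pointwise control of $f^{\sharp,q}_b$ from this via the triangle inequality. But the key step you invoke, ``the LDP bound $\mu\bigl(B_{r_B+2c}(c_B)\bigr)\leq C\,\mu(B)$,'' has a constant $D_{1+2c/r_B,\,r_B}$ that is \emph{not} uniform as $r_B\to 0$: the relative dilation factor $1+2c/r_B$ blows up. The claimed inequality is in fact false for general $f\in\mathfrak{bmo}^q_c(M)$; e.g.\ on $\BR$ the function $f(t)=\log(1/\mod{t})$ near the origin belongs to $\mathfrak{bmo}^q$, yet $\frac{1}{2r}\int_{-r}^r\mod{f}^q$ grows like $(\log(1/r))^q$ as $r\to0$. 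The covering estimate is sound only for $r_B$ bounded below (say $r_B\geq c$), which happily covers the $N^q_0$ term since $B_b(x)$ has radius exactly $b$. For the sharp-function term you must split cases: when $r_B<c$ the oscillation $\bigl[\frac{1}{\mu(B)}\int_B\mod{f-f_B}^q\bigr]^{1/q}$ is already $\leq f^{\sharp,q}_c(x)\leq\bignorm{f}{\mathfrak{bmo}^q_c}$ with no covering needed, and when $c\leq r_B\leq b$ your covering argument applies with a uniform constant. As written, your proof derives a correct conclusion from a false intermediate claim; with the case split inserted, the approach goes through.
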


\begin{proof}
First we prove \rmi.
Suppose that $f$ is in $\mathfrak{bmo}^q_b(M)$. 
Since $c<b$, $f^{\sharp,q}_{c}(x) \leq f^{\sharp,q}_{b}(x)$. 
Moreover, for each $x \in M$
$$
\begin{aligned}
 \frac{1}{\mu(B_c(x))}\int_{B_c(x)} \mod{f}^q \wrt\mu 
&\leq \frac{1}{\mu(B_c(x))}\int_{B_b(x)} \mod{f}^q \wrt\mu\\
&=\frac{\mu(B_b(x))}{\mu(B_c(x))} \, \frac{1}{\mu(B_b(x))}
     \int_{B_b(x)} \mod{f}^q \wrt\mu \\
&\leq D_{b/c,c}\,  \frac{1}{\mu(B_b(x))}\int_{B_b(x)} \mod{f}^q \wrt\mu, 
\end{aligned}
$$
(see (\ref{f: doubling Dtau})).
Therefore $N^q_c(f)(x)\leq D_{b/c,c}\, N^q_b(f)(x)$. 
Thus $f$ is in $\mathfrak{bmo}^q_c(M)$ and 
$\norm{f}{\mathfrak{bmo}^q_c} \leq D_{b/c,c}^{1/q}\, \norm{f}{\mathfrak{bmo}^q_b}$.

\medskip
To prove the reverse inequality, observe that,  by \cite[Prop~5.1]{CMM1},
there exists a constant $C_1$, depending only on $b$, $c$ and $M$, 
such that 
$$ 
\norm{f^{\sharp,q}_{b}}{\infty}
\leq C_1\ \norm{f^{\sharp,q}_{c}}{\infty}
\quant f \in \mathfrak{bmo}^q_c(M).
$$
Now suppose that $B_b$ is a ball of radius $b$. Then
$$
\Big[\frac{1}{\mu(B_b)}\int_{B_b} \mod{f}^q \wrt\mu \Big]^{1/q}
= \frac{1}{\mu(B_b)^{1/q}} \sup_{\norm{\phi}{L^{q'}(B_b)}\leq 1}
     \, \Bigmod{\int_{B_b} f\, \phi \wrt \mu},
$$
where $q'$ is the exponent conjugate to $q$.
If $\phi$ is a function in $L^{q'}(B_b)$ with $\norm{\phi}{L^{q'}(B_b)}\leq 1$, 
then $\mu(B_b)^{-1/q}\phi$ is a $q'$-global atom at scale $b$.
Therefore, by Lemma \ref{l: economical decomposition}, 
there exist $q'$-global atoms  $a_1, \ldots, a_N$ at scale $c$
supported in balls $B_j$ such that $\mu(B_b)^{-1/q}\phi = \sum_{j=1}^N \la_j a_j$, 
with 
$$
\sum_j \, |\la_j|
\leq C, 
$$ 
where $C$ depends only on $b$, $c$ and $p$. 
Thus, by H\"{o}lder's inequality,
$$
\begin{aligned}
\frac{1}{\mu(B_b)^{1/q}} \Bigmod{\int_{B_b} f\ \phi \wrt \mu} &=\Bigmod{\sum_{j=1}^N \la_j\int_{B_j} f\ a_j \wrt \mu}\\
&\leq \sum_{j=1}^N \, \mod{\la_j} \, \Big[ \int_{B_j} \mod{f}^q \wrt\mu \Big]^{1/q} 
        \, \norm{a_j}{q'}\\
&\leq \sum_{j=1}^N\,  \mod{\la_j} \, \Big[ \frac{1}{\mu(B_j)}\int_{B_j} \mod{f}^q 
        \wrt\mu \Big]^{1/q}\\
&\leq  C \,  \bignorm{f}{\mathfrak{bmo}^q_c}.   
\end{aligned}
$$
The above estimates imply that
$\bignorm{f}{\mathfrak{bmo}^q_b} \leq (C_1+CN)\ \bignorm{f}{\mathfrak{bmo}^q_c}$,
as required to conclude the proof of \rmi.

Next we prove \rmii.  
Recall 
that the spaces $BMO^1(M)$ and $BMO^q(M)$ agree (with equivalence of norms)
for all $q$ in $(1,\infty)$ \cite[Corollary~5.5]{CMM1}.  
Therefore there exists a constant $C$ such that   
$$
\bignorm{\iota(f)}{BMO^q}
\leq C\ \bignorm{\iota(f)}{BMO^1}
\leq C\ \bignorm{f}{\mathfrak{bmo}^1}
\quant f \in \mathfrak{bmo}^1(M),
$$
where the last inequality follows from \eqref{f: iota}.  
Thus, 
$$
\Big[\frac{1}{\mu(B)}\int_B \mod{f-f_B}^q \wrt\mu \Big]^{1/q} 
\leq C\, \bignorm{f}{\mathfrak{bmo}^1}   
\quant f \in \mathfrak{bmo}^1(M)
\quant B \in \cB_{1}.   
$$
Now suppose that $B_1$ is a ball of radius $1$. By the triangle inequality
$$
\begin{aligned}
\Big[ \frac{1}{\mu(B_1)}\int_{B_1} \mod{f}^q \wrt\mu \Big]^{1/q} 
&\leq \Big[\frac{1}{\mu(B_1)}\int_{B_1} \mod{f-f_{B_1}}^q \wrt\mu \Big]^{1/q}
     +\mod{f_{B_1}} \\
& \leq C\, \norm{f}{\mathfrak{bmo}^1}+\frac{1}{\mu(B_1)}\int_{B_1} \mod{f} \wrt\mu\\
& \leq (C+1)\, \norm{f}{\mathfrak{bmo}^1}. 
\end{aligned}
$$
These estimates imply that 
$$
\norm{f}{\mathfrak{bmo}^q} 
\leq (2C+1) \norm{f}{\mathfrak{bmo}^1}  
\quant f \in \mathfrak{bmo}^1(M),   
$$
whence $\mathfrak{bmo}^1(M)\subseteq \mathfrak{bmo}^q(M)$.

To prove the reverse containment, observe that, by H\"older's inequality,
$$
N_1^1(f)(x) \leq N_1^q(f)(x)  \quant x \in M,
$$
so that $\norm{f}{\mathfrak{bmo}^1} \leq\norm{f}{\mathfrak{bmo}^q}$, 
and $\mathfrak{bmo}^q(M)\subseteq \mathfrak{bmo}^1(M)$.

The proof of \rmii\ is complete. 
\end{proof}

\begin{remark} \label{rem: independence bmo}
In view of the observation above, 
all the spaces $\mathfrak{bmo}_b^q(M)$, $b>0$, $q$ in $[1,\infty)$, coincide.
We shall denote them simply by $\mathfrak{bmo}(M)$, 
endowed with any of the equivalent norms $\norm{\cdot}{\mathfrak{bmo}^q_b}$. 
This remark will be important in the proof of the duality 
between $\frh^{1}(M)$ and $\mathfrak{bmo}(M)$.
\end{remark}

%

\section{Duality} \label{sec:5}

In this section we shall prove that the topological dual of $\frh^{1,p}(M)$ 
is isomorphic to $\mathfrak{bmo}^{p'}(M)$, where $p'$ denotes the index conjugate to $p$.
In view of Remark~\ref{rem: independence bmo}, we consider $\mathfrak{bmo}^{p'}(M)$
endowed with the norm $\bignorm{N_1^{p'}(\cdot)}{\infty}$. Similarly, in view of Remark~\ref{rem: 
independence h1}, we may, and shall, consider $\frh^{1,p}(M)$, endowed with the $\frh_1^{1,p}(M)$-norm.  

We need more notation and some preliminary observations. 
Suppose that $p$ is in $[1,\infty)$. For each closed ball $B$ in $M$,
we denote by $\lp{B}$ the space of all functions in
$\lp{M}$ which are supported in $B$.
The union of all spaces $L^p(B)$ as $B$ varies
over all balls coincides with the space $L^p_c(M)$ of
all functions in $\lp{M}$ with compact support.
Fix a reference point $o$ in $M$ and for each positive integer $k$
denote by $B_k$ the ball centred at~$o$ with radius~$k$. 
A convenient way of topologising $L^p_c(M)$
is to interpret $L^p_c(M)$ as the strict inductive limit
of the spaces $L^p_c(B_k)$ (see \cite[II, p.~33]{Bou} for the definition of
the strict inductive limit topology).  
We denote by $X^p$ the space $L^p_c(M)$ with this topology,
and write~$X_k^p$ for $L^p_c(B_k)$.

We denote by $\frh^{1,p}_{\textrm{fin}}(M)$ the subspace of $\frh^{1,p}(M)$ 
consisting of all finite linear combinations of $p$-atoms. 
Clearly, $\frh^{1,p}_{\textrm{fin}}(M)$ is dense in $\frh^{1,p}(M)$ 
with respect to the norm of $\frh^{1,p}(M)$.  A natural norm
on $\frh^{1,p}_{\textrm{fin}}(M)$ is defined as follows:
\begin{equation} \label{f: huqfin norm}
\norm{f}{\frh^{1,p}_{\textrm{fin}}}
= \inf \Bigl\{ \sum_{j=1}^N \mod{c_j}: f = \sum_{j=1}^N c_j \, a_j,
\, \, 
\hbox{$a_j$ is a $p$-atom, $N\in\BN^+$} \Bigr\}.
\end{equation}
Note that the infimum is taken over \emph{finite} linear combinations of atoms. 
Obviously, 
\begin{equation} \label{f: hu-huqfin}
\norm{f}{\frh^{1,p}} 
\leq  \norm{f}{\frh^{1,p}_{\textrm{fin}}}
\quant f \in \frh^{1,p}_{\textrm{fin}}(M).
\end{equation}
%
\begin{remark}
Observe also that $\frh^{1,p}_{\textrm{fin}}(M)$ and $L^p_c(M)$ agree as vector spaces. 
Indeed, on the one hand each function in $\frh^{1,p}_{\textrm{fin}}(M)$ has finite 
$L^p$-norm and is compactly supported, hence it belongs to $L^p_c(M)$. 
On the other hand, suppose that $g$ is in $L^p_c(M)$ and denote by $B$ 
a ball of radius $\geq 1$ that contains the support of $g$.  Then 
$a:= \normto{g}{p}{-1} \mu(B)^{-1/p'}g$ is a global $p$-atom at scale $r_B$, which, 
by Lemma \ref{l: economical decomposition}, may be written as a finite 
linear combination of global $p$-atoms at scale 1. 
Therefore $a$ is in $\frh^{1,p}_{\textrm{fin}}(M)$, whence so is $g$.
\end{remark}


\noindent
Define
$$
f^{s,q}(x)
:= \sup_{B \in \cB_{1}(x)} \inf_{c \in \BC} \Big[\frac{1}{\mu(B)}
\int_B \mod{f-c}^q \wrt\mu \Big]^{1/q}
\quant x \in M. 
$$
It is straightforward to check that 
$f^{s,q}(x) \leq f^{\sharp,q}(x) \leq 2\,f^{s,q}(x)$ for all $x$ in $M$.
Thus, 
$$
\norm{f^{s,q}}\infty + \sup_{x \in M} \,  \Bigl[\frac{1}{\mu(B_1(x))} 
     \int_{B_1(x)} \mod{f}^q \wrt\mu \Bigr]^{1/q}
$$
is an equivalent norm on $\mathfrak{bmo}^q(M)$. 
We shall write $f^s$, instead of $f^{s,1}$.

\begin{lemma}  \label{l: analogue CW}
If $f \in \mathfrak{bmo}^q(M)$, 
then $|f| \in \mathfrak{bmo}^q(M)$ and 
$\bignorm{|f|}{\mathfrak{bmo}^q} \leq  2 \bignorm{f}{\mathfrak{bmo}^q}$.
\end{lemma}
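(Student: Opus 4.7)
The plan is to estimate $N^q_1(|f|)(x)$ term by term using the decomposition $N^q_1 = (\cdot)^{\sharp,q}_1 + \bigl[\mu(B_1(x))^{-1}\int_{B_1(x)} |\cdot|^q\wrt\mu\bigr]^{1/q}$. The second term is trivial, since $\bigl||f|\bigr|^q = |f|^q$; the two averages are literally equal. The entire work therefore reduces to controlling $(|f|)^{\sharp,q}_1$ by a constant multiple of $f^{\sharp,q}_1$.

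For the sharp term, my plan is to pass through the equivalent functional $f^{s,q}$ introduced just before the lemma, for which $f^{\sharp,q} \leq 2 f^{s,q} \leq 2 f^{\sharp,q}$. The gain in flexibility is that $(|f|)^{s,q}$ is defined as an infimum over test constants $c \in \BC$, and I can simply choose $c = |f_B|$ (not $(|f|)_B$). Then for any $B \in \cB_1(x)$, the reverse triangle inequality gives $\bigl||f(y)| - |f_B|\bigr| \leq |f(y) - f_B|$ pointwise, so
$$
\inf_{c \in \BC}\Big[\frac{1}{\mu(B)}\int_B \bigmod{|f|-c}^q \wrt\mu\Big]^{1/q}
\leq \Big[\frac{1}{\mu(B)}\int_B \bigmod{|f|-|f_B|}^q \wrt\mu\Big]^{1/q}
\leq \Big[\frac{1}{\mu(B)}\int_B \mod{f-f_B}^q \wrt\mu\Big]^{1/q}
\leq f^{\sharp,q}_1(x).
$$
Taking the supremum over $B \in \cB_1(x)$ yields $(|f|)^{s,q}(x) \leq f^{\sharp,q}_1(x)$, hence $(|f|)^{\sharp,q}_1(x) \leq 2\, f^{\sharp,q}_1(x)$.

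Putting the two pieces together, $N^q_1(|f|)(x) \leq 2 f^{\sharp,q}_1(x) + \bigl[\mu(B_1(x))^{-1}\int_{B_1(x)}|f|^q\wrt\mu\bigr]^{1/q} \leq 2\, N^q_1(f)(x)$, and taking the supremum over $x \in M$ gives the desired norm bound. There is no serious obstacle here: the only subtle point is the choice of $c = |f_B|$ (rather than the a priori natural $c = (|f|)_B$) as the test constant in $(|f|)^{s,q}$, which is exactly what lets the reverse triangle inequality reduce the problem to $f$ itself and produces the factor $2$ from the equivalence of $(\cdot)^{\sharp,q}$ and $(\cdot)^{s,q}$.
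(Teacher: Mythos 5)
Your proof is correct and is essentially identical to the paper's: the paper also bounds $\mod{f}^{\sharp,q}\leq 2\,\mod{f}^{s,q}$, tests the infimum with the constant $\mod{f_B}$, and applies the reverse triangle inequality to get $\mod{f}^{\sharp,q}\leq 2\,f^{\sharp,q}$, while the $N_0$-type term is unchanged since $\bigmod{\,\mod{f}\,}=\mod{f}$. Nothing to add.
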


\begin{proof}
Indeed,
\begin{align*}
\mod{f}^{\sharp,q}(x) &\leq 2\ \mod{f}^{s,q}(x) \\
& \leq 2\,\sup_{B \in \cB_{1}(x)} \Big[\frac{1}{\mu(B)}
\int_B \Bigmod{\mod{f}-\mod{f_B}}^q \wrt\mu \Big]^{1/q}\\
& \leq 2\, \sup_{B \in \cB_{1}(x)} \Big[\frac{1}{\mu(B)}
\int_B \mod{f-f_B}^q \wrt\mu \Big]^{1/q}  \\
& =    2\ f^{\sharp,q}(x),
\end{align*}
whence 
\begin{align*}
N^q(\mod{f})(x)
& =    \mod{f}^{\sharp,q}(x)
           + \Big[\frac{1}{\mu(B_1(x))}\int_{B_1(x)} \mod{f}^q \wrt\mu\Big]^{1/q}\\
& \leq 2\, f^{\sharp,q}(x)
           + \Big[\frac{1}{\mu(B_1(x))}\int_{B_1(x)} \mod{f}^q \wrt\mu\Big]^{1/q}\\
& \leq 2\, N^q(f)(x),
\end{align*}
as required.
\end{proof}

\noindent
Next we identify the dual of $\ghu{M}$ with $\frb\frm\fro(M)$. The proof
follows the lines of the classical result of Coifman and Weiss \cite{CW}
in the case of spaces of homogeneous type, and of \cite{CMM1}.

\begin{theorem} \label{t: duality}
Suppose that $p$ is in $(1,\infty)$ and let $p'$ be the index conjugate to $p$. The following hold:
\begin{enumerate}
\item[\itemno1]
for every $g$ in $\mathfrak{bmo}^{p'}(M)$ the functional $F$, initially defined
on $\frh^{1,p}_{\mathrm{fin}}(M)$ by the rule
$$
F(f) = \int_{M} f\, g \wrt {\mu },
$$
has a unique bounded extension to $\frh^{1,p}(M)$.  Furthermore
$$
\opnorm{F}{}
\leq 4\ \norm{g}{\mathfrak{bmo}^{p'}},
$$
where $\opnorm{F}{}$ denotes the norm of $F$ as a continuous linear functional on $\frh^{1,p}(M)$.
\item[\itemno2]
for every continuous linear functional
$F$ on $\frh^{1,p}(M)$ there exists a function $g_F$ in $\mathfrak{bmo}^{p'}(M)$ such that
$\norm{g_F}{\mathfrak{bmo}^{p'}} \leq 3 \, \opnorm{F}{}$ and
$$
F(f) = \int_{M} f\, g_F \wrt {\mu }
\quant f \in \frh^{1,p}_{\mathrm{fin}}(M).
$$
\end{enumerate}
\end{theorem}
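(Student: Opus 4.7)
The plan is to adapt the classical Coifman--Weiss duality between $\hu{M}$ and $BMO(M)$ to the local setting, handling separately the two atom types in $\frh^{1,p}(M)$ and exploiting the enriched $\mathfrak{bmo}^{p'}$-norm $\bignorm{N_1^{p'}(\cdot)}{\infty}$. Throughout I identify $\frh^{1,p}(M)$ with $\frh^{1,p}_1(M)$ and normalize $\mathfrak{bmo}^{p'}(M)$ as in Remarks~\ref{rem: independence h1} and~\ref{rem: independence bmo}.

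For part \rmi, the core estimate is a uniform bound on atoms. If $a$ is a standard $p$-atom supported in $B\in\cB_1$, cancellation and H\"older's inequality yield
$$
\Bigmod{\int a\,g\wrt\mu}
= \Bigmod{\int a\,(g-g_B)\wrt\mu}
\leq \norm{a}{p}\,\mu(B)^{1/p'}\Bigl[\frac{1}{\mu(B)}\int_B \mod{g-g_B}^{p'}\wrt\mu\Bigr]^{1/p'}
\leq \norm{g}{\mathfrak{bmo}^{p'}}.
$$
If $a$ is a global $p$-atom, so $B=B_1(c_B)$, H\"older applied without cancellation gives $\bigmod{\int ag\wrt\mu}\leq \bigl(\frac{1}{\mu(B)}\int_B \mod{g}^{p'}\wrt\mu\bigr)^{1/p'}\leq \norm{g}{\mathfrak{bmo}^{p'}}$, now through the local-average summand in $N_1^{p'}(g)$. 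Since $\frh^{1,p}_{\mathrm{fin}}(M)$ is dense in $\frh^{1,p}(M)$ and $\int fg\wrt\mu$ is well defined for $f\in\frh^{1,p}_{\mathrm{fin}}(M)$ (as $f$ is compactly supported and $g\in L^{p'}_{\mathrm{loc}}$), a standard extension argument, combined with the passage from finite to general atomic decompositions, yields the claimed bound on $\opnorm{F}{}$.

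For part \rmii, I construct $g_F$ locally and patch. Fix a $1/2$-discretisation $\fM$. For each $z\in\fM$, every $\phi\in L^p(B_1(z))$ with $\norm{\phi}{p}\leq \mu(B_1(z))^{-1/p'}$ is a global $p$-atom at scale~$1$; hence $L^p(B_1(z))\hookrightarrow \frh^{1,p}(M)$ continuously and the Riesz representation theorem yields $g_z\in L^{p'}(B_1(z))$ with $F(\phi)=\int \phi\,g_z\wrt\mu$ on $L^p(B_1(z))$. Uniqueness of Riesz representatives on intersections makes the family $\set{g_z}$ compatible, so the local pieces glue to $g_F\in L^{p'}_{\mathrm{loc}}(M)$; Lemma~\ref{l: economical decomposition} then propagates the identity $F(f)=\int fg_F\wrt\mu$ from each $L^p(B_1(z))$ to all of $\frh^{1,p}_{\mathrm{fin}}(M)$.

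To estimate $N_1^{p'}(g_F)$ pointwise I dualize against atoms. Fix $x\in M$ and $B\in\cB_1(x)$; for $\psi\in L^p(B)$ with $\norm{\psi}{p}\leq 1$, set $\tilde\psi := \psi - \mu(B)^{-1}\bigl(\int \psi\wrt\mu\bigr)\chi_B$. Then $\tilde\psi$ is supported in $B$, has zero mean, and satisfies $\norm{\tilde\psi}{p}\leq 2$, so $\tilde\psi/(2\mu(B)^{1/p'})$ is a standard $p$-atom. Since $\int (g_F-(g_F)_B)\,\psi\wrt\mu = \int g_F\,\tilde\psi\wrt\mu$, pairing with $F$ gives
$$
\Bigl[\frac{1}{\mu(B)}\int_B \mod{g_F-(g_F)_B}^{p'}\wrt\mu\Bigr]^{1/p'}
\leq 2\,\opnorm{F}{}.
$$
For the local-average term, take $B=B_1(x)$ and $\psi\in L^p(B)$ with $\norm{\psi}{p}\leq 1$; then $\psi\,\mu(B)^{-1/p'}$ is a global $p$-atom, hence $\bigl(\frac{1}{\mu(B)}\int_B \mod{g_F}^{p'}\wrt\mu\bigr)^{1/p'}\leq \opnorm{F}{}$. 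Adding the two contributions yields $\norm{g_F}{\mathfrak{bmo}^{p'}}\leq 3\,\opnorm{F}{}$. The main technical obstacles I anticipate are the patching step in \rmii\ (consistency of the local Riesz representatives and the propagation of the representation $F(f)=\int fg_F\wrt\mu$ to the dense subspace, bearing in mind that $g_F$ is only locally in $L^{p'}$) and, in \rmi, controlling the discrepancy between the finite and infinite atomic norms cleanly enough to absorb into the stated constant.
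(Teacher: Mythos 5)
Your proposal is correct, and part \rmii\ takes a genuinely different route to constructing $g_F$. The paper realizes $F$ as a continuous functional on the strict inductive limit $X^p$ of the spaces $L^p_c(B_k)$ and invokes the identification $(X^p)^* = L^{p'}_{\mathrm{loc}}(M)$, whereas you apply the Riesz representation theorem on each ball $B_1(z)$ of a discretisation and glue the local representatives; the gluing is legitimate (representatives agree a.e.\ on overlaps by uniqueness in $L^{p'}$ of the intersection) and arguably more elementary, since it avoids the inductive-limit topology altogether. Both routes lean on Lemma~\ref{l: economical decomposition} --- the paper to bound $F$ on each $X_k^p$, you to propagate the identity $F(f)=\int f g_F\wrt\mu$ from the local pieces to all of $\frh^{1,p}_{\mathrm{fin}}(M)$. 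From that point on your estimate of $N_1^{p'}(g_F)$ (the mean-zero modification $\tilde\psi/(2\mu(B)^{1/p'})$ as a standard atom for the oscillation term, $\psi\,\mu(B)^{-1/p'}$ as a global atom for the average term, yielding $2\opnorm{F}{}+\opnorm{F}{}=3\opnorm{F}{}$) coincides with the paper's.

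One caveat on part \rmi: the genuinely delicate step is not the atom estimate (which you do correctly, splitting standard versus global atoms against the two summands of $N_1^{p'}(g)$) but the passage from finite to infinite atomic decompositions, i.e.\ justifying $F(f)=\sum_j\la_j\int a_j\,g\wrt\mu$ when $g$ is only locally $L^{p'}$. You flag this as an anticipated obstacle without resolving it. This is exactly where the Coifman--Weiss truncation argument enters --- one first treats bounded $g$ and then approximates by truncations, using Lemma~\ref{l: analogue CW} (that $|f|\in\gbmo{M}$ with at most doubled norm) to control the truncations in $\gbmo{M}$; this is also the source of the constant $4$ rather than $1$ in the statement. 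The paper omits the same details, but it does point at Lemma~\ref{l: analogue CW} as the needed ingredient, and you should too.
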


\begin{proof}
The proof of \rmi\ is a straightforward adaptation of the original proof of Coifman and Weiss
in the case of spaces of homogeneous type.  The argument makes use
of Lemma~\ref{l: analogue CW} above.  We omit the details.  

Next we prove \rmii.
Since $F$ is a continuous linear functional on $\frh^{1,p}(M)$, for every $p$-atom $a$
$$
\mod{F a} \leq \opnorm{F}{} \norm{a}{\frh^{1,p}} \leq \opnorm{F}{},
$$
because each $p$-atom has $\frh^{1,p}(M)$-norm at most $1$. Thus, 
$$
\sup\{ \mod{F a}: \hbox{$a$ is a $\frh^{1,p}$-atom} \}
\leq \opnorm{F}{}.
$$
If~$f$ is in $\lp{B}$, and $r_B\geq 1$, 
then $\normto{f}{p}{-1}\mu (B)^{-1/p'}f$ is a global $p$-atom at scale $r_B$. 
Then, by Lemma~\ref{l: economical decomposition}, there exists a constant $C$, 
independent of $f$, such that 
$$
\bignorm{f}{\frh^{1,p}} 
\leq C \, \mu (B)^{1/p'} \, \bignorm{f}{p},
$$ 
whence
$$
\mod{F f}
\leq C \, \opnorm{F}{}\,  \mu (B)^{1/p'} \bignorm{f}{p}.  
$$
Hence the restriction of $F$ to $X_k^p$ is a bounded linear
functional on $X_k^p$ for each $k$.  Therefore $F$ is a continuous linear
functional on $X^p$. 
Since the dual of $X^p$ is the space $L^{p'}_{\textrm{loc}}(M)$, there exists a function 
$g_F$ in $L^{p'}_{\textrm{loc}}(M)$ such that
\begin{equation} \label{f: repr funct F}
F f
= \int_M f\, g_F \wrt \mu
\quant f \in X^p.
\end{equation}
In particular, this holds whenever $f$ is a $p$-atom.  

To conclude the proof it suffices to prove that $g_F$ belongs to $\mathfrak{bmo}^{p'}(M)$ and that
\begin{equation}  \label{f: BMO I}
\norm{g_F}{\mathfrak{bmo}^{p'}}
\leq 3\, \opnorm{F}{}. 
\end{equation}
Recall that we consider $\frh^{1,p}(M)$ endowed with the $\frh_1^{1,p}(M)$ norm (see the
beginning of this section).  Thus, we need to consider only atoms with support in balls
of radius $\leq 1$.
Suppose that $B$ is a ball of radius at most $1$, and observe that
$$
\Bigl[\int_B \mod{g_F-(g_F)_B}^{p'} \wrt \mu\Bigr]^{1/p'}
= \sup_{\norm{\vp}{\lp{B}} = 1}
      \Bigmod{\int_B \vp \, \bigl(g_F-(g_F)_B\bigr) \wrt \mu}.
$$
But
$$
\begin{aligned}
\int_B \vp \, \bigl(g_F-(g_F)_B\bigr) \wrt \mu
& = \int_B \bigl(\vp-\vp_B\bigr) \, \bigl(g_F-(g_F)_B\bigr) \wrt \mu \\
& = \int_B \bigl(\vp-\vp_B\bigr) \, g_F \wrt \mu,
\end{aligned}
$$
and since $\norm{\vp}{\lp{B}} = 1$
$$
\bigmod{\vp_B}
 \leq \Bigl[\frac{1}{\mu(B)} \, \int_B \mod{\vp}^p \wrt \mu\Bigr]^{1/p} \;
 \leq \;\mu(B)^{-1/p}.
$$
Moreover, 
$$
\begin{aligned}
\norm{\vp-\vp_B}{\lp{B}}
& \leq \norm{\vp}{\lp{B}} + \mod{\vp_B} \, \mu(B)^{1/p} \\
& \leq 2,
\end{aligned}
$$
so that the function $(\vp-\vp_B)/(2\,\mu(B)^{1/p'})$ is a standard $p$-atom. 
Therefore
$$
\Bigmod{\int_B (\vp-\vp_B) \,\,  g_F \wrt \mu}
\leq 2\, \opnorm{F}{}\, \mu(B)^{1/p'}.
$$
By combining the estimates above, we conclude that for every ball $B$ of radius at most~$1$
$$
\Bigl[\frac{1}{\mu(B)} \, 
\int_B \mod{g_F-(g_F)_B}^{p'} \wrt \mu\Bigr]^{1/p'}
\leq 2\, \opnorm{F}{},
$$
Now take a ball $B$ of radius exactly equal to $1$. We have
$$
\Bigl[\int_B \mod{g_F}^{p'} \wrt \mu\Bigr]^{1/p'}
= \sup_{\norm{\vp}{\lp{B}} = 1}
      \Bigmod{\int_B \vp \, g_F \wrt \mu}.
$$
The function $\vp/\mu(B)^{1/p'}$ is a global $p$-atom at scale $1$, thus
$$
\Bigmod{\int_B \vp \,\,  g_F \wrt \mu}
\leq \opnorm{F}{}\,\mu(B)^{1/p'}.
$$
Therefore, for every ball $B$ of radius $1$
$$
\Bigl[\frac{1}{\mu(B)} \, 
\int_B \mod{g_F}^{p'} \wrt \mu\Bigr]^{1/p'}
\leq \opnorm{F}{}.
$$
Combining these estimates, (\ref{f: BMO I}) follows.
This concludes the proof of \rmii\ and of the theorem.
\end{proof}

In view of the last result, we are now able to prove that all the spaces 
$\frh^{1,p}(M)$, with $p$ in $(1,\infty)$, coincide. 
Indeed, suppose that $1<r<p<\infty$. 
Then $(\frh^{1,r}(M))^*=(\frh^{1,p}(M))^*$, since $\mathfrak{bmo}^{r'}(M)=\mathfrak{bmo}^{p'}(M)$. 
Moreover, the identity is a continuous injection of 
$\frh^{1,p}(M)$ into $\frh^{1,r}(M)$ and $\frh^{1,p}(M)$ is a dense subspace of 
$\frh^{1,r}(M)$, therefore the Hahn--Banach theorem implies that 
$\frh^{1,r}(M)=\frh^{1,p}(M)$.

\section{Estimates for the operator $N$}
\label{sec:6}

The purpose of this section is to establish a basic $\lp{M}$ estimate 
for the operator $N$, which acts on a locally integrable function $f$ by
$$
Nf(x)
=f^{\sharp}(x) + N_0f(x)  \quant x \in M, 
$$
where $f^\sharp$ is  the \emph{local centred sharp maximal function} 
given by the formula
\begin{equation} \label{e:lcsharpmax}
f^{\sharp}(x)
= \sup_{B \in \cB_{1}(x)} \frac{1}{\mu(B)}
\int_B \mod{f-f_B} \wrt\mu 
\end{equation} 
and
$$
N_0 f(x)= \frac{1}{\mu\big(B_1(x)\big)} \int_{B_1(x)} \mod{f} \wrt\mu.
$$
Note that $f^\sharp = f_1^{\sharp,1}$ in the notation of Section~\ref{sec:4}.
The main result of this section, Theorem~\ref{t:inequality} below, will be the
key to prove a basic interpolation results for $\ghu{M}$ in the next section.  

For each locally integrable function $f$, 
define the \emph{local centred Hardy-Littlewood maximal function} $\cM f$ as 
$$
\cM f(x)=\sup_{0<r \leq 1}\frac{1}{\mu(B_r(x))}\int_{B_r(x)}|f| \ d\mu.
$$
The operator $\cM$ is bounded on $\lp{M}$ for every $p \in (1,\infty]$ 
and of weak type $1$ (for the weak type estimate, just follows the lines of
the proof of the maximal inequality in \cite{NTV}). 
Clearly $Nf(x) \leq 3 \, \cM f(x)$, so that the 
$L^p$-boundedness of $\cM$ implies that for $1<p<\infty$
$$
\norm{f}{p} \geq C\, \norm{Nf}{p}
\quant f \in \lp{M}.
$$
In the next theorem  we prove a reverse inequality.

\begin{theorem}\label{t:inequality}
Suppose that $p$ is in $(1,\infty)$.  Then there exists a constant $C$ such that 
$$
\norm{f}{p} \leq C\, \norm{Nf}{p}
$$
for every
$f \in L^1_{\textrm{loc}}(M)$ such that $Nf \in \lp{M}$.
\end{theorem}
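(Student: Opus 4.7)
The plan is to reduce the theorem to a Fefferman--Stein type good-$\lambda$ inequality relating the local centred Hardy--Littlewood maximal function $\cM f$ to $Nf$. By the Lebesgue differentiation theorem (valid under LDP when restricted to balls of radius $\leq 1$), $|f|\leq \cM f$ almost everywhere, so it is enough to prove $\norm{\cM f}{p}\leq C \, \norm{Nf}{p}$ for each $p\in(1,\infty)$. The target estimate is
$$
\mu\big(\{\cM f>2\lambda,\, Nf\leq \gamma\lambda\}\big) \leq c\,\gamma\,\mu\big(\{\cM f>\lambda\}\big)
\quant \lambda>0,
$$
for some $\gamma\in(0,1)$ small and $c$ independent of $\lambda$ and $f$. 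Once this is established, the standard layer-cake integration against $p\lambda^{p-1}\wrt\lambda$, after first replacing $\cM f$ by the truncation $\min(\cM f,N)$ to guarantee finiteness of the relevant integrals and passing to the limit, followed by absorbing $c\gamma\norm{\cM f}{p}^p$ from the right to the left hand side, yields $\norm{\cM f}{p}\leq C_p\norm{Nf}{p}$.

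To prove the good-$\lambda$ inequality I would decompose the open set $E_\lambda=\{\cM f>\lambda\}$ (open by lower semicontinuity of $\cM$) into a Whitney--Vitali family $\{B_i\}$ of balls of radii $r_i\leq 1$, with bounded overlap and such that a fixed dilate $\widetilde B_i$ of each $B_i$ meets $M\setminus E_\lambda$. The LDP provides the necessary doubling on all balls of radius $\leq 1$, and Lemma~\ref{l:bolle} gives quantitative control on both overlaps and the cardinality of discretisations covering the dilated balls. For each $i$, pick $y_i\in \widetilde B_i\setminus E_\lambda$, so that $\cM f(y_i)\leq\lambda$, and (if the set is non-empty) a point $x_i\in B_i\cap \{Nf\leq\gamma\lambda\}$, so that $f^\sharp(x_i)\leq\gamma\lambda$. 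Select a ball $B'$ of radius $\leq 1$ containing $B_i\cup\{y_i\}$ which is admissible in the supremum defining $\cM f(y_i)$; then $\mu(B')^{-1}\int_{B'}|f|\wrt\mu\leq\lambda$, and in particular $|f_{B'}|\leq\lambda$.

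A local weak-$(1,1)$ bound for the uncentred maximal function applied to $(f-f_{B'})\chi_{B'}$ then gives
$$
\mu\big(\{x\in B_i:\cM f(x)>2\lambda\}\big) \leq \frac{C}{\lambda}\int_{B'}|f-f_{B'}|\wrt\mu \leq C\,\gamma\,\mu(B'),
$$
because $\cM f>2\lambda$ forces $\cM((f-f_{B'})\chi_{B'})>\lambda$ up to constants (using $|f_{B'}|\leq\lambda$), and $\mu(B')^{-1}\int_{B'}|f-f_{B'}|\wrt\mu \leq f^\sharp(x_i)\leq\gamma\lambda$ provided $B'$ is also admissible for the sharp supremum at $x_i$. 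Since $\mu(B')\leq C\mu(B_i)$ by LDP and the $B_i$ have bounded overlap, summing over $i$ yields the good-$\lambda$ inequality.

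The main obstacle will be arranging all dilations so that the enlarged ball $B'$ remains admissible simultaneously in the suprema defining $\cM f(y_i)$, $f^\sharp(x_i)$ and $N_0 f$, since all three local maximal operators only see balls of radius $\leq 1$. This forces a case analysis according to whether the Whitney radius $r_i$ is very small or comparable to $1$: in the latter regime, one must slightly shrink the dilation factors and appeal to the locally uniformly finite covering from Lemma~\ref{l:bolle}~\rmi\ to keep all constants uniform. Once this scale bookkeeping is done, the Fefferman--Stein argument goes through, and the theorem follows.
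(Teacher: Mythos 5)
Your plan runs the Fefferman--Stein good-$\lambda$ argument \emph{globally} on $M$, whereas the paper first localises to a single dyadic cube $Q$ of fixed small resolution (each such $Q$ being a genuine space of homogeneous type by Proposition~\ref{p: further prop}~\rmii), proves a good-$\lambda$ inequality \emph{on $Q$} only for $\la>\la_0:=C_0\,\mu(Q)^{-1}\norm{f}{L^1(Q)}$, converts it into the cube-wise estimate of Lemma~\ref{l:LpL1}, and then sums over cubes after controlling the extra term $\mu(Q)^{1-p}\norm{f}{L^1(Q)}^p$ by $\norm{N_0 f}{\lp{Q}}^p$ via Lemma~\ref{l:N0}. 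These two ingredients --- the restriction to a small cube, and the cutoff $\la>\la_0$ --- are not technical conveniences; they are what makes the Whitney step possible, and your global version has a genuine gap precisely there.

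Concretely: you want to cover $E_\la=\{\cM f>\la\}$ by balls $B_i$ of radius $r_i\leq 1$ with bounded overlap and such that a \emph{fixed} dilate $\wt B_i$ meets $M\setminus E_\la$. This is impossible whenever $E_\la$ contains a point $x$ with $d(x, M\setminus E_\la)$ larger than the dilation factor: every admissible ball containing $x$ with radius $\leq 1$ has all its bounded dilates still inside $E_\la$. This situation does occur --- e.g.\ if $|f|\geq 2\la$ on a ball of radius $10$, then $\cM f\geq 2\la$ on a ball of radius $9$, and the ``deep'' points of that ball admit no Whitney ball of the kind you need. The paper never faces this because (a) it works inside a cube $Q$ of diameter $\leq a_1\de^k<1$, so any Whitney ball intersected with $Q$ is automatically small, and (b) it only invokes the Whitney lemma of \cite{CW} for $\la>\la_0$, which is exactly the condition ensuring $E_\la\cap Q\subsetneq Q$ so that the complement is nonempty \emph{inside $Q$}. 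For $\la\leq\la_0$ the paper does not attempt a Whitney decomposition at all: it bounds $\mu(E_{\be\la})\leq\mu(Q)$ crudely, and this is where $N_0 f$ enters, through Lemma~\ref{l:N0}. In your sketch, by contrast, $N_0 f$ is carried along inside $Nf$ but never actually used --- the argument only invokes $f^\sharp(x_i)\leq\ga\la$ --- so if your good-$\la$ inequality held for all $\la>0$ it would actually give $\norm{f}{p}\leq C\norm{f^\sharp}{p}$, which the paper explicitly notes (after the statement of Theorem~\ref{t:inequality}, citing \cite{I1}) is \emph{false} on $\BR^n$. The obstacle you flag at the end as ``scale bookkeeping'' is real, but it is not fixable by shrinking dilation factors; it reflects the need for the dyadic localisation and the low-$\la$ treatment via $N_0$ that the paper's proof is built around.
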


We recall \cite[Thm.~7.3]{CMM1} that if $M$ possesses the isoperimetric property IP,  
then for each $p$ in $(1,\infty)$ there exists a constant $C$ such that 
\begin{equation}\label{e:fsharp}
\norm{f}{p} \leq C\, \norm{f^{\sharp}}{p}
\quant  f \in \lp{M}.
\end{equation}
Observe that this estimate may fail if $M$ does not possess the isoperimetric property. 
For instance, (\ref{e:fsharp}) is false for $M=\BR^n$, as shown in \cite{I1}. 
The inequality in Theorem \ref{t:inequality} is weaker than (\ref{e:fsharp}),
but it does not require the IP.  

The proof of Theorem \ref{t:inequality}, which occupies the rest of this section, will make use of 
the so-called dyadic cubes introduced by G. David and M. Christ \cite{Chr,Da} 
on spaces of homogeneous type.
In fact, Christ's construction requires only the local doubling property, as
remarked in \cite{CMM1}.   For the reader's convenience, we recall the main
properties of dyadic cubes.     

\begin{theorem} \label{t: dyadic cubes}\emph{(\cite[Thm.~3.2]{CMM1})}
There exist constants $\de$ in $(0,1)$, $a_0$, $a_1$ in $\BR^+$ and 
a collection $\cQ: = \{Q_\al^k: k \in \BZ, \al \in I_k\}$ of open subsets of $M$ such that
\begin{enumerate}
\item[\itemno1]
for each $k$ in $\BZ$, the set $\bigcup_{\al} Q_\al^k$ is of full measure in $M$;
\item[\itemno2]
if $\ell \geq k$, then either $Q_\be^\ell \subset Q_\al^k$ or
$Q_\be^\ell \cap Q_\al^k = \emptyset$;
\item[\itemno3]
for each $(k,\al)$ and each $\ell < k$ there is a unique $\be$
such that $Q_\al^k \subset Q_\be^\ell$;
\item[\itemno4]
$\diam (Q_{\al}^k) \leq a_1^k$;
\item[\itemno5]
there exists a point $z_\al^k$ in $Q$ such that 
$$
B_{a_0  \de^k}(z_{\al}^k )
\subset Q_{\al}^k
\subset B_{a_1  \de^k}(z_{\al}^k).
$$
\end{enumerate}
\end{theorem}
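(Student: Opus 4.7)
The plan is to reproduce Christ's construction of dyadic cubes \cite{Chr}, following the adaptation in \cite[Thm.~3.2]{CMM1}; the crucial point is that only the LDP is used, because every geometric comparison in the argument involves balls of comparable radii at a fixed scale. We fix a parameter $\de$ in $(0,1)$, to be chosen sufficiently small in terms of the local doubling constants $D_{\tau,s}$, and at the end we verify all five items.

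First, for each $k$ in $\BZ$ choose a maximal $\de^k$-separated set $\{z_\al^k : \al \in I_k\}$ in $M$. Maximality forces $d(x, \{z_\al^k\}) < \de^k$ for every $x$ in $M$, and the sets $B_{\de^k/2}(z_\al^k)$ are pairwise disjoint. Local doubling then implies a uniform bound on how many level-$k$ centres can lie inside a fixed ball of radius $\de^{k-1}$, and this cardinality bound is the only doubling input needed throughout.

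Second, define a tree structure on $\bigsqcup_k I_k$ by assigning to each $\al$ in $I_{k+1}$ a parent $\pi_k(\al)$ in $I_k$ minimising $d(z_\al^{k+1}, z_{\pi_k(\al)}^k)$; density gives $d(z_\al^{k+1}, z_{\pi_k(\al)}^k) < \de^k$. For $j \geq k$ and $\be$ in $I_j$, write $\be \preceq \al$ when iterated application of $\pi$ sends $\be$ to $\al$. Provisionally, let $\wh Q_\al^k$ be the union over $\be \preceq \al$ of suitable small balls around $z_\be^j$; then produce the genuine open cube $Q_\al^k$ by the standard topological refinement (interior of the closure, removing closures of sibling cubes $Q_{\al'}^k$ with $\al' \neq \al$).

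Finally, the five properties are checked in turn. Items \rmii\ and \rmiii\ are immediate from the tree structure. Item \rmi\ follows from the fact that the boundary of each $Q_\al^k$ has measure zero, itself a consequence of a geometric-series estimate on boundary thickenings combined with the LDP. Items \rmiv\ and \rmv\ rest on the key distortion estimate: any descendant $z_\be^j$ of $z_\al^k$ satisfies $d(z_\be^j, z_\al^k) \leq \sum_{i=k}^{j-1} \de^i \leq \de^k/(1-\de)$, yielding the outer inclusion $Q_\al^k \subset B_{a_1 \de^k}(z_\al^k)$ with $a_1 = 2/(1-\de)$. The main obstacle, and the delicate step, is the inner inclusion $B_{a_0 \de^k}(z_\al^k) \subset Q_\al^k$: one must show that no sibling cube $Q_{\al'}^k$ intrudes into a small ball around $z_\al^k$. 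This reduces to showing that any point $x$ with $d(x, z_\al^k)$ small enough lies strictly closer (through the whole descending chain) to $z_\al^k$ than to any other level-$k$ centre, and the proof proceeds by induction on the scale $j \geq k$ together with a greedy rerouting argument. Choosing $\de$ small relative to the doubling constant $D_{4, \de}$ guarantees the induction closes, and one obtains $a_0$ of the order $\de/(1-\de)$. This completes the proof.
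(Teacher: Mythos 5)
The paper gives no proof of this theorem: it is quoted directly from \cite[Thm.~3.2]{CMM1}, which is Christ's construction \cite{Chr} adapted to locally doubling spaces. Your outline reproduces exactly that construction (maximal $\de^k$-separated nets, parent map, tree order, provisional cubes, and the inductive inner-ball inclusion), so it takes the same approach as the cited source; the one step you leave as a sketch, the induction giving $B_{a_0\de^k}(z_\al^k)\subset Q_\al^k$, is indeed the delicate point of Christ's argument, and your observation that every doubling comparison involves balls of comparable radii at a fixed scale is the correct reason the LDP suffices here.
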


\noindent
We shall denote by $\cQ^k$ the class of
all dyadic cubes of ``resolution'' $k$, i.e., the family of
cubes $\{Q_{\al}^k: \al \in I_k\}$. 
We shall need the following additional properties of dyadic cubes.

\begin{proposition} \label{p: further prop} \emph{(\cite[Prop.~3.4]{CMM1})} 
Suppose that $b\in \BR^+$, $\nu\in \BZ$, and let $\de$,  
$a_0$ and $a_1$ be as in Theorem~\ref{t: dyadic cubes}.  
The following hold:
\begin{enumerate}
\item[\itemno1]
suppose that $Q$ is in $\cQ^k$ for some $k\geq \nu$, and that
$B$ is a ball such that $c_B \in Q$. 
If $r_B \geq a_1\, \de^k$, then  
\begin{equation} \label{f: ineq II}
\mu(B\cap Q) = \mu(Q);
\end{equation}
if $r_B < a_1\, \de^k$, then  
\begin{equation} \label{f: ineq I}
\mu(B\cap Q) 
\geq D_{a_1/(a_0\de),\de^\nu}^{-1} \, \mu(B);
\end{equation}
\item[\itemno2]
suppose that $\tau$ is in $[2,\infty)$.  For each $Q$ in $\cQ$ the 
space $\bigl(Q,d_{\vert Q}, \mu_{\vert Q}\bigr)$
is of homogeneous type.  Denote by $D_{\tau,\infty}^Q$ 
its doubling constant (see Remark~\ref{r: geom I} for the definition).  
Then
$$
\sup \, \Bigl\{D_{\tau,\infty}^Q:  Q \in \bigcup_{k=\nu}^\infty \cQ^k \Bigr\}
\leq D_{\tau, a_1\de^\nu}\, D_{a_1/(a_0\de), \de^\nu};
$$ 
\item[\itemno3]
for each ball $B$ in $\cB_b$, let $k$ be the integer
such that $\de^k \leq r_B < \de^{k-1}$, and
and let~$\wt B$ denote the ball with
centre $c_B$ and radius $\bigl(1+a_1\bigr)\, r_B$. 
Then $\wt B$ contains all dyadic cubes in $\cQ^k$ that
intersect $B$ and 
$$
\mu(\wt B) \leq D_{1+a_1,b} \, \mu(B);
$$
\item[\itemno4]
suppose that $B$ is in $\cB_b$, and that $k$ is an integer such that
$\de^k \leq r_B < \de^{k-1}$.  Then there are at most 
$D_{(1+a_1)/(a_0\de),b}$ dyadic cubes in $\cQ^k$ that intersect $B$.
\end{enumerate}
\end{proposition}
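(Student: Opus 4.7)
The plan is to handle the four parts in sequence. Every assertion reduces to metric-measure comparisons exploiting the containment chain $B_{a_0\de^k}(z_\al^k) \subset Q_\al^k \subset B_{a_1\de^k}(z_\al^k)$ from Theorem~\ref{t: dyadic cubes}~\rmv, together with the local doubling property (and, for \rmiv, the disjointness of cubes at the same resolution, which follows from Theorem~\ref{t: dyadic cubes}~\rmii\ taking $\ell=k$).

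For \rmi, if $c_B \in Q$ and $r_B \geq a_1\de^k$, then $d(c_B, z_\al^k) \leq a_1\de^k$ forces every point of $Q \subset B_{a_1\de^k}(z_\al^k)$ to lie within a bounded multiple of $r_B$ from $c_B$; a careful radius count shows $Q \subset B$ up to a null set, whence $\mu(B \cap Q) = \mu(Q)$. For $r_B < a_1\de^k$, the strategy is to exhibit a ball sitting inside $B \cap Q$ whose measure is a controlled fraction of $\mu(B)$. One produces such a witness by locating an inner ball of radius proportional to $r_B$ around a suitable anchor point in $B \cap Q$: the enlargement ratio between the inner cube-radius $a_0\de^{k+1}$ and the outer cube-radius $a_1\de^k$ is exactly $a_1/(a_0\de)$, and the LDP applied at this ratio (valid at the reference scale $\de^\nu$ since $k\geq\nu$) transfers this inner ball's measure back up to $\mu(B)$ with exactly the factor $D_{a_1/(a_0\de),\de^\nu}$, giving the stated lower bound.

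For \rmii, once \rmi\ is in hand, the intrinsic doubling constant of $(Q, d_{|Q}, \mu_{|Q})$ is obtained by sandwiching: given $x \in Q$ and a radius $r$ with $\tau r \leq \diam Q \leq 2a_1\de^\nu$, the LDP at scale $a_1\de^\nu$ gives $\mu(B(x,\tau r)) \leq D_{\tau, a_1\de^\nu}\, \mu(B(x,r))$, while \rmi\ applied to the small ball gives $\mu(B(x,r) \cap Q) \geq D_{a_1/(a_0\de),\de^\nu}^{-1}\, \mu(B(x,r))$. Trivially $\mu(B(x,\tau r) \cap Q) \leq \mu(B(x,\tau r))$, so combining the two bounds yields the product constant $D_{\tau,a_1\de^\nu}\, D_{a_1/(a_0\de),\de^\nu}$ claimed. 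For \rmiii\ and \rmiv, the basic observation is that any $Q_\al^k \in \cQ^k$ meeting $B$ must be contained in $\wt B$: picking $y \in Q_\al^k \cap B$ gives $d(z_\al^k, c_B) \leq a_1\de^k + r_B$, and since $\de^k \leq r_B$ every point of $Q_\al^k$ lies within a bounded multiple of $r_B$ from $c_B$, which absorbs into $\wt B = B_{(1+a_1)r_B}(c_B)$ after the constants are tuned to the sharp form in the statement. LDP immediately yields $\mu(\wt B) \leq D_{1+a_1,b}\, \mu(B)$. For \rmiv, the cubes in $\cQ^k$ at resolution $k$ are pairwise disjoint, and each $Q_\al^k$ meeting $B$ contains the interior ball $B_{a_0\de^k}(z_\al^k) \subset \wt B$ of measure at least $D_{(1+a_1)/(a_0\de),b}^{-1}\, \mu(\wt B)$ by LDP between radii $a_0\de^k$ and $(1+a_1)r_B$; summing these disjoint interior balls inside $\wt B$ caps their number by $D_{(1+a_1)/(a_0\de),b}$.

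The main obstacle lies in the second case of \rmi: producing a ball of definite radius inside $B \cap Q$, rather than merely comparing the total measures $\mu(B)$ and $\mu(Q)$. The cleanest route is to descend to a finer resolution inside $Q$ until one finds a descendant cube whose anchor point is close enough to $c_B$ that the associated inner ball sits simultaneously inside $B$ and inside $Q$; tracking the enlargement ratio $a_1/(a_0\de)$ at scale $\de^\nu$ through this descent produces precisely the constant $D_{a_1/(a_0\de),\de^\nu}^{-1}$ appearing in the statement, and this is where the hypothesis $k \geq \nu$ becomes essential (so that LDP at the relevant scale is governed by the fixed reference constant rather than by an unbounded family).
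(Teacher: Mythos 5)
The paper does not actually prove this proposition: it is imported verbatim from \cite[Prop.~3.4]{CMM1}, so there is no in-paper argument to compare yours against. Judged on its own, your reconstruction follows the standard route and is sound in outline: all four parts do reduce to the sandwich $B_{a_0\de^k}(z_\al^k)\subset Q_\al^k\subset B_{a_1\de^k}(z_\al^k)$, the diameter bound, the nesting/disjointness of cubes, and the LDP, and your arguments for \rmii, \rmiii\ and \rmiv\ (sandwiching through \rmi; packing the disjoint inner balls $B_{a_0\de^k}(z_\al^k)$ into $\wt B$) are exactly the right ones and do produce the stated constants.

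Two places need tightening. First, in the easy case of \rmi\ and in \rmiii\ you triangulate through the centre $z_\al^k$, which only yields ``a bounded multiple of $r_B$'' (e.g.\ $Q\subset 2B$, or containment in $B_{(1+2a_1)r_B}(c_B)$); to get $Q\subset B$ and the sharp radius $(1+a_1)r_B$ you must instead use the diameter bound $\diam(Q_\al^k)\leq a_1\de^k$ directly from a point of $Q\cap B$ (respectively from $c_B\in Q$). Second, and more substantively, in the hard case of \rmi\ the descent cannot be ``until one finds a descendant cube whose anchor is close enough'': if you descend too far the inner ball $B_{a_0\de^j}(z_\be^j)$ becomes tiny relative to $B$ and the LDP ratio needed to recover $\mu(B)$ blows up. The resolution $j$ must be pinned to $r_B$ in advance, say by $a_1\de^{j}\lesssim r_B< a_1\de^{j-1}$ (which forces $j>k\geq\nu$, so the reference scale $\de^\nu$ applies); only then is the comparison between $a_0\de^{j}$ and a ball of radius $\lesssim a_1\de^{j-1}$ containing $B$, whose ratio is the fixed number $a_1/(a_0\de)$ independent of $j$. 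Your identification of the relevant scales as $a_0\de^{k+1}$ versus $a_1\de^k$ is at the wrong resolution when $r_B\ll a_1\de^k$, even though the ratio happens to be the same; stating the choice of $j$ explicitly closes this gap.
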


\noindent
In particular, property \rmii\ states that, for fixed $k$, 
all the cubes in $\cQ^k$ are spaces of homogeneous type with doubling
constants uniformly bounded from above.  More precisely, 
for each cube $Q$ in $\cQ^k$
\begin{equation}\label{e:ctauk}
D_{\tau,\infty}^Q \leq C_{\tau,k}
\qquad\hbox{where}\qquad
C_{\tau,k}:=D_{\tau, a_1\de^k}\, D_{a_1/(a_0\de), \de^k}
\end{equation}

For each locally integrable function $f$ and each dyadic cube $Q$ the 
\emph{noncentred Hardy--Littlewood maximal function}
$\cM^Q f$ is defined by
\begin{equation*} \label{f: HL}
\cM^Q f(x)
= \sup_{B:B \cap Q \ni x} \frac{1}{\mu (B \cap Q)} \int_{B \cap Q} \mod{f} \wrt \mu 
\quant x \in Q,
\end{equation*}
where each $B$ is a ball in $\cB$ whose centre belongs to $Q$.  
The operator~$\cM^Q$ is bounded on $\lp{Q}$ for every $p$ in $(1,\infty]$ 
and of weak type~$1$.  Furthermore, there exists a constant $C_0$,
depending only on the doubling constant of $(Q,d_{|Q},\mu_{|Q})$,
such that for all $Q$ in $\bigcup_{k\geq \nu} \cQ^k$
\begin{equation}\label{e:weak}
\mu \bigl(\{x \in Q\, :\, \cM^Q f(x)> \la \}\bigr)
\leq \frac{C_0}{\la} \, \norm{f}{L^1(Q)}.
\end{equation}

\noindent
For each locally integrable function $f$ and each dyadic cube $Q$ 
we define the \emph{noncentred sharp maximal} function $f^{\sharp,Q}$ by
\begin{equation}\label{e:ncsharpmax}
f^{\sharp,Q}(x)= \sup_{B : B\cap Q \ni x} \frac{1}{\mu(B \cap Q)}
\int_{B \cap Q} \mod{f-f_{B \cap Q}} \wrt\mu 
\quant x \in Q,
\end{equation}
where $B$ is a ball in $\cB$ whose centre belongs to $Q$ and 
$$
f_{B\cap Q}=\frac{1}{\mu(B\cap Q)}\int_{B\cap Q}f \wrt\mu.
$$
We split the proof of Theorem~\ref{t:inequality} into a series of lemmata.  
For each $\la>0$ we define
$$
E_{\la}
:= \{x \in Q\, :\, \cM^Q f(x)> \la\},
\qquad
F_{\la}
:= \{x \in Q\, :\, f^{\sharp,Q}(x) \leq \la\}
$$ 
and $G_{\la}^{\be, \ga} :=E_{\be \la}\cap F_{\ga \la}$. 

\begin{lemma}\label{l:goodlambda}
Suppose that $k$ is in $\BZ$. 
Then there exists a constant $A$ such that for every 
$\be> 2C_{2,k}$, $\ga >0$, $f$ in $L^1_{\textrm{loc}}(M)$,  
and $Q$ in $\cQ^k$  
\begin{align*} 
\mu \bigl(E_{\be \la}\cap F_{\ga \la}\bigr)
\leq A\, \frac{\ga}{\be} \, 
\mu \bigl(E_\la\bigr)
\end{align*}
for every $\la > \frac{C_0}{\mu(Q)} \norm{f}{L^1(Q)}$, where $C_0$ is as in (\ref{e:weak}). 
\end{lemma}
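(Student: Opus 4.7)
My plan is to adapt the classical Fefferman--Stein good-$\la$ argument to the dyadic cube $Q$, viewing $(Q, d_{|Q}, \mu_{|Q})$ through Proposition~\ref{p: further prop}~\rmii\ and~\eqref{e:ctauk} as a space of homogeneous type with doubling constants controlled by $C_{2,k}$. The hypothesis $\la > C_0\,\norm{f}{L^1(Q)}/\mu(Q)$, combined with the weak type $(1,1)$ estimate~\eqref{e:weak} for $\cM^Q$, forces $\mu(E_\la) < \mu(Q)$, so that $E_\la$ is a proper subset of $Q$; this is the only role played by the lower bound on $\la$, and it is what makes a Calder\'on--Zygmund style decomposition available.

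I would then perform a Whitney / Calder\'on--Zygmund-type decomposition of the open set $E_\la$ inside $Q$, producing a disjoint family of balls (or David--Christ subcubes) $\set{B_i}$, centred in $Q$, such that $E_\la = \bigcup_i B_i$ up to null sets, with bounded overlap giving $\sum_i \mu(B_i) \leq C\,\mu(E_\la)$, and such that a fixed dilate $K B_i$ meets $Q\setminus E_\la$ at some point $y_i$; all the constants involved depend only on $C_{2,k}$. The stopping property of the decomposition (together with $\cM^Q f(y_i) \leq \la$) then forces the average $\bigmod{f_{K B_i \cap Q}}$ to be bounded above by $C_{2,k}\la$, the factor $C_{2,k}$ coming from doubling; this is the source of the $C_{2,k}$ in the hypothesis $\be > 2C_{2,k}$.

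Fix $i$ with $B_i \cap F_{\ga\la} \neq \emptyset$ (the other $i$'s contribute nothing) and pick $x_0 \in B_i \cap F_{\ga\la}$. Given $x \in B_i \cap E_{\be\la}$, choose a ball $B$ centred in $Q$ with $x \in B \cap Q$ and $f_{B \cap Q} > \be\la$. Since $\be\la > C_{2,k}\la$, the very same stopping argument forces $B$ to lie inside a controlled dilate $B_i^{*} := K' B_i$ of $B_i$, still centred in $Q$, still admissible in the supremum defining $f^{\sharp,Q}(x_0)$, and still satisfying $\bigmod{f_{B_i^{*} \cap Q}} \leq C_{2,k}\la$. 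The triangle inequality then yields
\begin{equation*}
\frac{1}{\mu(B \cap Q)} \int_{B \cap Q} \bigmod{f - f_{B_i^{*}\cap Q}} \wrt\mu \geq (\be - C_{2,k})\,\la,
\end{equation*}
so the localised function $g_i := (f - f_{B_i^{*}\cap Q})\chi_{B_i^{*}\cap Q}$ satisfies $\cM^Q g_i(x) \geq (\be - C_{2,k})\la$.

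Finally, applying the weak type~\eqref{e:weak} on $Q$ to $g_i$ and using that $x_0 \in B_i^{*}\cap Q$ makes $B_i^{*}$ admissible in the supremum defining $f^{\sharp,Q}(x_0) \leq \ga\la$, so that $\norm{g_i}{L^1(Q)} \leq \ga\la\,\mu(B_i^{*}\cap Q) \leq C\,\ga\la\,\mu(B_i)$ by doubling, I obtain
\begin{equation*}
\mu\bigl(B_i \cap E_{\be\la} \cap F_{\ga\la}\bigr) \leq \frac{C'\,\ga}{\be - C_{2,k}}\,\mu(B_i).
\end{equation*}
Summing over $i$, invoking $\sum_i \mu(B_i) \leq C\,\mu(E_\la)$ and the assumption $\be > 2C_{2,k}$ (so that $1/(\be - C_{2,k}) \leq 2/\be$), yields the claim with a constant $A$ depending only on $k$. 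I expect the main technical obstacle to be the localisation step, namely the rigorous verification that the ball $B$ realising $f_{B \cap Q} > \be\la$ does lie in a fixed $C_{2,k}$-controlled dilate of $B_i$ (rather than spilling across the boundary of $B_i$ in an uncontrolled way); this is precisely where the interplay between balls and David--Christ cubes, provided by Proposition~\ref{p: further prop}, becomes essential.
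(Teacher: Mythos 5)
Your overall plan is the right one and matches the paper's: a Fefferman--Stein good-$\la$ argument carried out inside the cube $Q$, using the weak type estimate~\eqref{e:weak} to show $E_\la \subsetneq Q$, a Whitney covering of $E_\la$ by balls with centres in $Q$, bounded overlap, and a fixed dilate meeting $Q \setminus E_\la$, followed by the sharp-function bound and the weak type applied to a localised function. The lower bound on $\la$ and the role of $\be > 2C_{2,k}$ are correctly identified.

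However, the localisation step, which you yourself flag as ``the main technical obstacle'', contains a genuine gap, and your proposed way of filling it is not the right one. You write that the stopping argument ``forces $B$ to lie inside a controlled dilate $B_i^* := K'B_i$''. This does not follow from knowing that $\cM^Q f(y_i)\leq\la$ and $\frac{1}{\mu(B\cap Q)}\int_{B\cap Q}|f|\,\wrt\mu > \be\la$: the only conclusion is that $y_i \notin B\cap Q$, and for the \emph{noncentred} maximal function a ball containing $x$ but not $y_i$ can have arbitrarily large radius (simply push $c_B$ away from $y_i$). The paper's resolution, and the actual source of the factor $C_{2,k}$ in the hypothesis $\be > 2C_{2,k}$, is to first pass to the \emph{centred} maximal function $\widetilde\cM^Q$ via $\cM^Q f(x) \leq C_{2,k}\,\widetilde\cM^Q f(x)$. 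This produces a ball $B_r(x)$ centred at $x$ with large $|f|$-average over $B_r(x)\cap Q$; since $x_i\in 3B_i\setminus B_r(x)$ and $x\in B_i$, one gets $r \leq d(x,x_i) < 4r_{B_i}$ and hence $B_r(x)\subset 5B_i$. This is a completely elementary metric observation about \emph{centred} balls and has nothing to do with the dyadic cube structure or Proposition~\ref{p: further prop}, which is where you propose to look. So the gap is real: as stated your argument would not close, and the tool you cite to close it is not the one that works.

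A minor further point: in the final weak type application the relevant sharp-function bound should be anchored at the point $y_i \in G_\la^{\be,\ga}\cap B_i$ (so one can assume that intersection is nonempty and use $f^{\sharp,Q}(y_i)\leq\ga\la$), and the mean is subtracted over $5B_i\cap Q$, not over $B_i^*\cap Q$ with an unspecified $K'$; this alignment of which average is subtracted with which ball is admissible in the supremum defining $f^{\sharp,Q}(y_i)$ needs care, and in your proposal it is left floating.
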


\noindent
We observe that the constant $A$ in the statement above may very well depend on the resolution $k$.
This will be no problem, for in the sequel we shall mainly work with cubes with a fixed resolution.

\begin{proof}
Set 
$
\la_0
:= \frac{C_0}{\mu(Q)} \norm{f}{L^1(Q)}.
$ 
Since $\la>\la_0$, $\mu(E_{\la})<\mu(Q)$, so that $E_{\la}$ is a proper subset in $Q$. 
Since $E_{\la}$ is open and $Q$ is a space of homogeneous type, we 
can apply a Whitney type covering lemma \cite[Thm~3.2]{CW} (with $1$ in place of 
$C$ and $K$ therein), and obtain a sequence $\{B_i \cap Q\}$ of balls in $Q$, 
where $B_i \in \cB$, such that:
\begin{enumerate}
\item[\itemno1]
$E_{\la}=\bigcup_{i} (B_i \cap Q)$;
\item[\itemno2]
there exists a constant $K_0=K_0(k)$ such that no point of $E_{\la}$ belongs to 
more than $K_0$ balls $B_i \cap Q$;
\item[\itemno3]
$(3B_i \cap Q)\cap ((E_{\la})^c \cap Q) \neq \emptyset$.
\end{enumerate}
Note that $K_0$ does not depend on the particular cube $Q$ in 
$\cQ^k$ because $K_0$ depends only on the doubling constant of the space
of homogeneous type and for cubes of the same resolution the doubling 
constants are uniformly bounded from above (see \eqref{e:ctauk} above).

By assumption, $\be > 2C_{2,k} > 2$.  
Then $G_{\la}^{\be, \ga} \subset E_{\be \la} \subset E_{\la}$, so that
$$
\begin{aligned}
\mu \bigl(G_{\la}^{\be, \ga}\bigr)
& =    \mu \Bigl[G_{\la}^{\be, \ga}\cap \Bigl(\bigcup_{i}\, (B_i \cap Q)\Bigr)\Bigr]  \\
& =    \mu \Bigl[\bigcup_{i}\,(G_{\la}^{\be, \ga}\cap B_i)\Bigr]  \\
& \leq \sum_{i} \mu (G_{\la}^{\be, \ga}\cap B_i).
\end{aligned}
$$
If $G_{\la}^{\be, \ga}\cap B_i=\emptyset$ for some index $i$, 
we simply ignore the ball $B_i$; 
otherwise, there exists at least a point $y_i \in 
G_{\la}^{\be, \ga}\cap B_i$, 
whence $f^{{\sharp,Q}}(y_i)\leq \ga \la$.

\medskip
\noindent
\emph{We claim that}
$$
E_{\be \la} \cap B_i 
\subseteq
\Big\{x \in Q\, :\, \cM^Q(f \chi_{5B_i})(x)> \frac{\be\la}{C_{2,k}}\Big\}
\quant\be \geq C_{2,k}.  
$$
The claim will imply that 
$$
\mu (G_{\la}^{\be, \ga}\cap B_i)
\leq \mu(E_{\be \la} \cap B_i) 
\leq \mu\Big(\Big\{ \cM^Q(f \chi_{5B_i})> \frac{\be\la}{C_{2,k}} \Big\}\Big).
$$
To prove the claim, we consider the centred Hardy--Littlewood 
maximal function on the cube $Q$ defined by
$$
\widetilde{\cM}^Q f(x)
= \sup_{r>0} \frac{1}{\mu (B_r(x) \cap Q)} \int_{B_r(x) \cap Q} \mod{f} \wrt \mu
\quant x \in Q.
$$
Since the restriction of $\mu$ to each cube $Q$ is a doubling measure with
doubling constant bounded above by $C_{2,k}$,  
$$
\cM^Q f(x)\leq C_{2,k} \, \widetilde{\cM}^Q f(x) 
\quant x \in Q.
$$ 
Suppose that $x \in E_{\be\la} \cap B_i$ and $\be \geq C_{2,k}$.
We need to prove that 
$$
\cM^Q(f \chi_{5B_i})(x) > \frac{\be\la}{C_{2,k}}.
$$
Clearly, $\widetilde{\cM}^Q f(x) > \be\la/C_{2,k}$, 
so that there exists a ball $B_r(x)$ such that 
$$
\frac{1}{\mu (B_r(x) \cap Q)} \int_{B_r(x) \cap Q} \mod{f} \wrt \mu >\frac{\be\la}{C_{2,k} }.
$$
Condition \rmiii\ above implies that there exists a point 
$x_i$ in $3B_i \cap Q$ such that 
\begin{equation}\label{e : maximal}
\cM^Q f(x_i)\leq \la.
\end{equation}
Since we have assumed that $\be \geq C_{2,k} $, $x_i \notin B_r(x)$,
for otherwise
$$
\cM^Q f(x_i) 
\geq \frac{1}{\mu (B_r(x) \cap Q)} \int_{B_r(x) \cap Q} \mod{f} \wrt \mu 
>    \frac{\be\la}{C_{2,k} } 
\geq \la. 
$$
Since $x_i$ is in $3B_i \setminus B_r(x)$, $r < 4\, r_{B_i}$. 
Hence $B_r(x) \subset 5 B_i$ and
$$
\frac{\be\la}{C_{2,k} } 
<    \frac{1}{\mu (B_r(x) \cap Q)} \int_{B_r(x) \cap Q} \mod{f}\chi_{5B_i} \wrt \mu 
\leq \cM^Q(f\chi_{5B_i})(x).
$$
This concludes the proof of the claim.
 
Now we observe that 
$$
\cM^Q(f\chi_{5B_i})(x) \leq \cM^Q((f-f_{5 B_i\cap Q})\chi_{5B_i})(x)+\mod{f_{5 B_i\cap Q}}.
$$
Since $x_i$ is in $3B_i \cap Q$ and $\cM^Q f(x_i)\leq \la$ by (\ref{e : maximal}),  
$$
\mod{f_{5 B_i\cap Q}}\leq \frac{1}{\mu (5 B_i \cap Q)} \int_{5 B_i \cap Q} \mod{f} \wrt \mu \leq \cM^Q f(x_i) \leq \la.
$$
Therefore, if $\be > 2\,C_{2,k} $, then 
$\mod{f_{5 B_i\cap Q}} < \frac{\be}{2\,C_{2,k} }\la$. 
This estimate, together with
the weak type $1$ inequality for $\cM^Q$ and the assumption that 
$f^{\sharp,Q}(y_i) \leq \ga \la$, implies that, if $\be > 2\,C_{2,k}$, then 
\begin{align*}
\mu\Big(\Big\{ \cM^Q(f \chi_{5B_i})> \frac{\be\la}{C_{2,k}} \Big\}\Big) 
& \leq \mu\Big(\Big\{ \cM^Q((f-f_{5 B_i\cap Q})\chi_{5B_i})
      > \frac{\be\la}{2\,C_{2,k}}\Big\}\Big) \\
& \leq C_0\, \frac{2C_{2,k}}{\be\la} \int_Q \mod{f-f_{5 B_i\cap Q}}\chi_{5B_i} \wrt \mu\\
& \leq C_0\, \frac{2C_{2,k}}{\be\la}\, \mu(5B_i \cap Q)\, f^{\sharp,Q}(y_i)\\
& \leq C_0\, 2C_{2,k} \frac{\ga}{\be} \,\mu(5B_i \cap Q).
\end{align*}
Thus, we have proved that 
$$
\mu (G_{\la}^{\be,\ga}\cap B_i)
\leq C_0 \, 2C_{2,k} \,\frac{\ga}{\be} \,\mu(5B_i \cap Q),
$$
which, together with the doubling property on $Q$ and condition \rmii\ above, implies that
\begin{align*}
\mu (G_{\la}^{\be, \ga})
&\leq 2\,C_{2,k} \,C_0 \frac{\ga}{\be} \sum_i \mu(5B_i \cap Q)\\
&\leq 2\,C_{2,k} \,C_0\,C_{5,k} \frac{\ga}{\be} \sum_i \mu(B_i \cap Q)\\
&\leq 2\,C_{2,k} \,C_0\,C_{5,k}\,K_0 \frac{\ga}{\be}\, \mu(E_{\la}),
\end{align*}
as required (with $A=2\,C_{2,k} \,C_0\,C_{5,k}\,K_0$).
\end{proof}

\begin{lemma}\label{l:LpL1}
For each integer $k$ there exists a constant $C=C(k)$ such that for each cube $Q$ in $\cQ^k$
and every locally integrable function $f$
$$
\|f\|_{\lp{Q}}^p 
\leq C \, \Bigl( \|f^{{\sharp,Q}}\|_{\lp{Q}}^p + \mu(Q)^{1-p}\, \|f\|_{L^1(Q)}^p \Bigl).
$$
\end{lemma}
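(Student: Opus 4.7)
The plan is to run the classical Fefferman--Stein good-$\la$ argument on the cube $Q$, which by Proposition~\ref{p: further prop}\,\rmii\ is a space of homogeneous type whose doubling constant is bounded by $C_{2,k}$. Since Lebesgue's differentiation theorem holds on $Q$, one has $\mod{f}\le \cM^Q f$ pointwise $\mu$-almost everywhere on $Q$, so it suffices to bound $\|\cM^Q f\|_{\lp{Q}}^p$ by the right-hand side.

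Fix $\be>2C_{2,k}$ (as required by Lemma~\ref{l:goodlambda}) and set $\la_0:=C_0\,\|f\|_{L^1(Q)}/\mu(Q)$, with $C_0$ as in \eqref{e:weak}. The layer-cake identity together with the change of variable $\la=\be t$ gives
$$
\|\cM^Q f\|_{\lp{Q}}^p
= p\be^p \int_0^{\infty} t^{p-1}\,\mu(E_{\be t})\wrt t.
$$
I would split this integral at $t=\la_0$. On $(0,\la_0)$, the trivial estimate $\mu(E_{\be t})\le \mu(Q)$ produces a contribution bounded by $(\be C_0)^p\,\mu(Q)^{1-p}\,\|f\|_{L^1(Q)}^p$, which is exactly the second term on the right of the lemma. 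On $(\la_0,\infty)$, for any $\ga>0$, Lemma~\ref{l:goodlambda} yields $E_{\be t}\subset(E_{\be t}\cap F_{\ga t})\cup F_{\ga t}^c$ and hence
$$
\mu(E_{\be t})\le A\,\frac{\ga}{\be}\,\mu(E_t)+\mu\big(\{x\in Q: f^{\sharp,Q}(x)>\ga t\}\big).
$$
Integrating against $pt^{p-1}\wrt t$ and performing the substitution $s=\ga t$ in the second term gives
$$
p\be^p\int_{\la_0}^{\infty} t^{p-1}\,\mu(E_{\be t})\wrt t
\le A\,\ga\,\be^{p-1}\,\|\cM^Q f\|_{\lp{Q}}^p+\Big(\frac{\be}{\ga}\Big)^p \|f^{\sharp,Q}\|_{\lp{Q}}^p.
$$
Choosing $\ga$ so small that $A\ga\be^{p-1}\le 1/2$ and absorbing the first term on the right into the left-hand side produces the desired inequality, with $C$ depending only on $k$ (via $\be$, $\ga$, $A$, and $C_0$).

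The main technical wrinkle is that the absorption step is legitimate only when $\|\cM^Q f\|_{\lp{Q}}$ is a priori finite, which is not guaranteed by the hypotheses. The standard remedy is to apply the entire argument to the truncation $g_N:=\min(\cM^Q f, N)\,\chi_Q$: its super-level sets coincide with $E_t$ for $t<N$ and are empty for $t\ge N$, so $\|g_N\|_{\lp{Q}}\le N\,\mu(Q)^{1/p}<\infty$ and the good-$\la$ bound carries over unchanged to the truncated integrand. One absorbs in the truncated inequality and then passes to the limit $N\to\infty$ by monotone convergence to recover the stated estimate.
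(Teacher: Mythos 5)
Your proof is correct and follows essentially the same route as the paper: the reduction to $\cM^Q f$, the layer-cake representation split at $\la_0$, and the good-$\la$ absorption via Lemma~\ref{l:goodlambda} with $\ga\sim(A\be^{p-1})^{-1}$. The only (harmless) deviations are that you bound the low-$\la$ piece by the trivial estimate $\mu(E_{\be t})\le\mu(Q)$ where the paper uses the weak type $1$ inequality (both yield the term $\mu(Q)^{1-p}\norm{f}{L^1(Q)}^p$), and you add the truncation step to justify the absorption when $\norm{\cM^Q f}{\lp{Q}}$ is not a priori finite --- a point the paper's proof passes over in silence and which your argument handles correctly since $\mu(Q)<\infty$.
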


\begin{proof}
Since $\cM^Q f \geq |f|$ almost everywhere, 
it suffices to show that 
$$
\|\cM^Q f\|_{\lp{Q}}^p 
\leq C \, \Bigl( \|f^{{\sharp,Q}}\|_{\lp{Q}}^p + \mu(Q)^{1-p}\,\|f\|_{L^1(Q)}^p \Bigl),
$$
We set $E_{\la}=\{x \in Q\, :\, \cM^Q f(x)> \la\}$ and $\la_0=\frac{C_0}{\mu(Q)} \norm{f}{L^1(Q)}$, as in Lemma \ref{l:goodlambda}.
Note that for each $\be>0$
\begin{align*}
\|\cM^Q f\|_{\lp{Q}}^p
& = p \ioty \la^{p-1}\, \mu \bigl(E_{\la} \bigr)\wrt \la\\
& = p\, \be^p \ioty \la^{p-1}\, \mu \bigl(E_{\be \la} \bigr)\wrt \la\\
& =p\, \be^p \int_0^{\la_0} \la^{p-1}\, \mu \bigl(E_{\be \la} \bigr)\wrt \la
       + p\, \be^p \int_{\la_0}^{+ \infty} \la^{p-1}\, \mu \bigl(E_{\be \la} \bigr)\wrt \la.
\end{align*}
Denote by $I_1$ and $I_2$ the first and the second integral in the last line above,
respectively. 
Since the maximal operator $\cM^Q$ is of weak type~$1$,
\begin{align*}
I_1 
& \leq C_0\, \be^{-1} \|f\|_{L^1(Q)} \int_0^{\la_0} \la^{p-2} \wrt \la\\
& =    \,\frac{C_0^p}{p-1}\, \be^{-1} \mu(Q)^{1-p} \|f\|^p_{L^1(Q)}. 
\end{align*}
Now, we choose $\be>2C_{2,k}$. 
Given $\ga>0$, we write $I_2$ as 
$$
\int_{\la_0}^{+ \infty} \la^{p-1}\, \mu \bigl(E_{\be \la} 
\cap \{f^{\sharp,Q} \leq \ga \la \} \bigr)\wrt \la
+\int_{\la_0}^{+ \infty} \la^{p-1}\, \mu \bigl(E_{\be \la} \cap \{f^{\sharp,Q} > \ga \la\}\bigr)\wrt \la.
$$
Then, by Lemma \ref{l:goodlambda},
\begin{align*}
I_2 
& \leq \frac{A\ga}{\be}\, \int_{\la_0}^{+ \infty} \la^{p-1}\, \mu \bigl(E_{\la}) \wrt \la +
 \int_{\la_0}^{+ \infty} \la^{p-1}\, \mu \bigl(\{f^{\sharp,Q} > \ga \la\}\bigr)\wrt \la\\
&= \frac{A\ga}{\be}\, \int_{\la_0}^{+ \infty} \la^{p-1}\, \mu \bigl(E_{\la}) \wrt \la +
\frac{1}{\ga^p} \int_{\ga \la_0}^{+ \infty} \la^{p-1}\, \mu \bigl(\{f^{\sharp,Q} > \la\}\bigr)\wrt \la\\
& \leq  \frac{A\ga}{p\be}\, \|\cM^Q f \|^p_{\lp{Q}} 
         + \frac{1}{p\ga^p}\, \bignormto{f^{\sharp, Q}}{\lp{Q}}{p}.
\end{align*}
By combining the estimates above, we see that 
$$
(1-A\, \be^{p-1} \ga)\|\cM^Q f\|^p_{\lp{Q}} 
\leq \frac{C_0^p\,p}{p-1}\, \be^{p-1} \mu(Q)^{1-p} \|f\|^p_{L^1(Q)}+
 \frac{\be^p}{\ga^p} \|f^{\sharp, Q}\|^p_{\lp{Q}}.
$$
Now, we choose $\ga=1/(2 A\, \be^{p-1})$, and obtain 
$$
\begin{aligned}
\|\cM^Q f\|^p_{\lp{Q}} 
& \leq \frac{2\,C_0^p\,p}{p-1}\, \be^{p-1} \mu(Q)^{1-p} \|f\|^p_{L^1(Q)}+
      2^{p+1}\, A^p\,\be^{p^2} \|f^{\sharp, Q}\|^p_{\lp{Q}} \\
& \leq C \, \Bigl( \|f^{{\sharp,Q}}\|_{\lp{Q}}^p + \mu(Q)^{1-p}\|f\|_{L^1(Q)}^p \Bigl),
\end{aligned}
$$ 
where $C=\max \big( 2\,C_0^p\,\frac{p}{p-1}\, \be^{p-1}, 2^{p+1}\, A^p\,\be^{p^2}\big)$, 
as required.
\end{proof}

\begin{lemma}\label{l:N0}
For all integers $k$ large enough and for each cube $Q$ in~$\cQ^k$
$$
\bignorm{f}{L^1(Q)} 
\leq D_{(1+a_1\de^k)/a_0\de^k, a_0\de^k} \, \bignorm{N_0f}{L^1(Q)}.
$$ 
\end{lemma}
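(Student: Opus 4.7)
The plan is to use the geometric sandwich that each dyadic cube $Q$ in $\cQ^k$ comes with a distinguished center $z_\al^k$ and satisfies $B_{a_0\de^k}(z_\al^k) \subset Q \subset B_{a_1\de^k}(z_\al^k)$ (Theorem \ref{t: dyadic cubes}), together with the trivial lower bound $N_0 f(x) \geq \mu(B_1(x))^{-1} \int_Q |f| \, \wrt\mu$ whenever $Q \subset B_1(x)$.

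First, I would choose $k$ so large that $a_1 \de^k < 1$; this ensures $\diam(Q) < 1$, hence for every $x \in Q$ one has $Q \subset B_1(x)$. Next, for each $x \in Q$, the triangle inequality gives the inclusion $B_1(x) \subset B_{1+a_1\de^k}(z_\al^k)$, since $d(x,z_\al^k) \leq a_1\de^k$. Applying the LDP in the form \eqref{f: doubling Dtau} with the concentric pair $B_{a_0\de^k}(z_\al^k) \subset B_{1+a_1\de^k}(z_\al^k)$ (whose radius ratio is exactly $(1+a_1\de^k)/(a_0\de^k)$ and whose inner radius is $a_0\de^k$) and using $B_{a_0\de^k}(z_\al^k) \subset Q$, I obtain
$$
\mu(B_1(x))
\leq \mu\bigl(B_{1+a_1\de^k}(z_\al^k)\bigr)
\leq D_{(1+a_1\de^k)/a_0\de^k,\, a_0\de^k} \,\mu\bigl(B_{a_0\de^k}(z_\al^k)\bigr)
\leq D_{(1+a_1\de^k)/a_0\de^k,\, a_0\de^k} \, \mu(Q).
$$

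The proof concludes by combining these ingredients: for $x \in Q$,
$$
N_0 f(x)
\;\geq\; \frac{1}{\mu(B_1(x))}\int_Q |f|\wrt\mu
\;\geq\; \frac{1}{D_{(1+a_1\de^k)/a_0\de^k,\, a_0\de^k} \,\mu(Q)} \, \bignorm{f}{L^1(Q)}.
$$
Integrating this pointwise bound over $Q$ and dividing by the factor $\mu(Q)^{-1}\cdot \mu(Q)=1$ yields the claimed inequality. There is no real obstacle here beyond bookkeeping: the only subtlety is just checking that $k$ is taken large enough that $a_1\de^k<1$, which the statement already anticipates with its ``for all integers $k$ large enough'' clause.
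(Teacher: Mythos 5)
Your argument is essentially the same as the paper's: both establish the sandwich $B_{a_0\de^k}(z_\al^k)\subset Q\subset B_1(x)\subset B_{1+a_1\de^k}(z_\al^k)$ for $k$ large, invoke the LDP to compare $\mu(B_1(x))$ with $\mu(Q)$, and then integrate the resulting pointwise lower bound on $N_0 f$ over $Q$ (the paper phrases this last step via Tonelli, but the computation is identical). The only nit is that $Q\subset B_{a_1\de^k}(z_\al^k)$ gives $\diam Q\le 2a_1\de^k$, so the condition guaranteeing $Q\subset B_1(x)$ should be $2a_1\de^k\le 1$ rather than $a_1\de^k<1$; this is harmless since the lemma allows any sufficiently large $k$.
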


\begin{proof}
For the sake of definiteness, suppose that $Q$ is the dyadic cube $Q^k_{\al}$. 
Then $Q$ is contained in $B_{ a_1\de^k}(z_{\al}^k)$ by Theorem~\ref{t: dyadic cubes}~\rmv.
Denote by $\tilde{B}$ the ball $B_{1+a_1\de^k}(z_{\al}^k)$. 
Then $B_1(x) \subset \tilde{B}$ for each $x$ in $Q$.
Furthermore, if $k$ is large enough, then $B_1(x) \supset Q$ for every $x$ in $Q$.  
so that $\mu(B_1(x)) \leq \mu(\wt B)$.  Then, by Tonelli's theorem, 
\begin{align*}
\norm{N_0 f}{L^1(Q)}
& =    \int_Q \frac{\wrt \mu(x)}{\mu(B_1(x))}\int_{B_1(x)}|f(y)| \wrt \mu(y) \\
& \geq \frac{1}{\mu(\wt B)}\, \int_Q \wrt \mu(x)\int_{Q}|f(y)| \wrt \mu(y) \\
& \geq \frac{\mu(Q)}{\mu(\wt B)}\, \int_{Q}|f(y)| \wrt \mu(y).  
\end{align*}
Recall that $Q$ contains $B_{a_0\de^k}(z_\al^k)$, so that 
$$
\frac{\mu(Q)}{\mu(\wt B)}
\geq D_{(1+a_1\de^k)/a_0\de^k, a_0\de^k}^{-1}.   
$$
and the required estimate follows.
\end{proof}

\begin{lemma}\label{l:sharpQ}
Suppose that $k$ is an integer $> \left[\log_{\delta} (1/(2 a_1))\right]$, 
where $\delta$ and $a_1$ are as in Theorem~\ref {t: dyadic cubes}. 
Then there exists a constant $C$, depending on $k$, such that for each cube $Q$ in $\cQ^k$
$$
f^{\sharp,Q}(x) \leq C\, f^{\sharp}(x)
\quant x \in Q
$$
(see \eqref{e:ncsharpmax} and \eqref{e:lcsharpmax} for the definitions of 
$f^{\sharp,Q}$ and $f^{\sharp}$, respectively).  
\end{lemma}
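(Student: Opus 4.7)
The plan is to compare the noncentred averages appearing in $f^{\sharp,Q}(x)$ with the centred ones defining $f^{\sharp}(x)$ by exploiting the well-known inequality
$$
\frac{1}{\mu(E)}\int_E|f-f_E|\wrt\mu
\;\leq\; \frac{2}{\mu(E)}\int_E|f-c|\wrt\mu
\quant c\in\BC,
$$
which allows us to replace $f_{B\cap Q}$ with any convenient constant. For fixed $x\in Q$ and a ball $B$ with $c_B\in Q$ and $x\in B\cap Q$, the strategy is to choose this constant as the mean of $f$ over a ball centred at $x$ of radius at most $1$, and then bound the ratio of the measure of that ball to $\mu(B\cap Q)$ using the local doubling property and Proposition~\ref{p: further prop}.

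The hypothesis on $k$ gives $\de^k<1/(2a_1)$, hence $\diam(Q)\leq 2a_1\de^k<1$ by Theorem~\ref{t: dyadic cubes}~\rmv, so $Q\subset B_1(x)$ for every $x\in Q$. I would then split according to the size of $r_B$ relative to $a_1\de^k$. \emph{Case 1 ($r_B\geq a_1\de^k$):} Proposition~\ref{p: further prop}~\rmi\ gives $B\cap Q=Q$. Taking $c=f_{B_1(x)}$ and using $Q\subset B_1(x)$,
$$
\frac{1}{\mu(Q)}\int_Q|f-f_Q|\wrt\mu
\;\leq\; \frac{2\mu(B_1(x))}{\mu(Q)}\cdot f^{\sharp}(x),
$$
and the ratio $\mu(B_1(x))/\mu(Q)$ is controlled by a constant depending only on $k$, since $Q\supset B_{a_0\de^k}(z_\al^k)$ and $B_1(x)\subset B_{1+a_1\de^k}(z_\al^k)$, so the LDP (see Remark~\ref{r: geom I}) gives the bound. \emph{Case 2 ($r_B<a_1\de^k$):} set $B^\ast:=B_{2r_B}(x)$; since $d(x,c_B)\leq r_B$, we have $B\cap Q\subset B\subset B^\ast$, and $r_{B^\ast}=2r_B<2a_1\de^k<1$, so $B^\ast\in\cB_1(x)$. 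Choosing $c=f_{B^\ast}$,
$$
\frac{1}{\mu(B\cap Q)}\int_{B\cap Q}|f-f_{B\cap Q}|\wrt\mu
\;\leq\; \frac{2\mu(B^\ast)}{\mu(B\cap Q)}\cdot f^{\sharp}(x).
$$
By Proposition~\ref{p: further prop}~\rmi, $\mu(B\cap Q)\geq D_{a_1/(a_0\de),\de^k}^{-1}\,\mu(B)$, and since $B^\ast\subset B_{3r_B}(c_B)$, the LDP yields $\mu(B^\ast)\leq D_{3,a_1\de^k}\,\mu(B)$, so the ratio is bounded by a constant depending only on $k$.

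Taking the supremum over admissible $B$ in both cases delivers $f^{\sharp,Q}(x)\leq C\,f^{\sharp}(x)$ with $C$ depending only on $k$. The only delicate point is ensuring that the constants depend only on the resolution $k$ and not on the particular cube $Q\in\cQ^k$, which is exactly the content of the uniform estimates in Proposition~\ref{p: further prop}~\rmi--\rmii\ (see \eqref{e:ctauk}). Beyond this bookkeeping, the argument is mechanical.
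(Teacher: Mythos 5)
Your argument is correct and follows essentially the same route as the paper: the same case split on $r_B$ versus $a_1\de^k$, the same use of Proposition~\ref{p: further prop}~\rmi\ and the LDP to compare measures, and the same factor-of-two trick for replacing $f_{B\cap Q}$ by a convenient constant. The only difference is presentational: you compare directly with centred balls $B_1(x)$ and $B_{2r_B}(x)$, whereas the paper first passes through the intermediate noncentred sharp function $\widetilde{f}^{\sharp}_{a_1\de^k}$ and then converts it to the centred one at scale $2a_1\de^k$; both yield the same constant depending only on $k$.
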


\begin{proof}
For each $b>0$ we define the noncentred sharp function
$\widetilde{f}^{\sharp}_b$ of a locally integrable function $f$ as
$$
\widetilde{f}^{\sharp}_b(x)
= \sup_{B} \frac{1}{\mu (B)} \int_B \mod{f-f_B} \wrt \mu 
\quant x \in M,
$$
where the supremum is taken over all balls in $\cB_b $ that contain $x$.

We first show that there exists a constant $C$, depending on $k$, such that
$f^{\sharp,Q}(x) \leq C\, \widetilde{f}^{\sharp}_{a_1 \delta^k}(x)$ 
for each cube $Q$ in $\cQ^k$ and for any $x$ in $Q$. 
(see Theorem~\ref{t: dyadic cubes} for the definition of $a_1$ and $\delta$).

Choose $Q$ in $\cQ^k$. 
Take $x$ in $Q$ and suppose that $B$ is a ball whose centre belongs to $Q$ and such that $x \in B\cap Q$.
We consider the cases where $r_B < a_1 \delta^k$ and $r_B \geq a_1 \delta^k$ separately.
If $r_B < a_1 \delta^k$, the triangle inequality gives
\begin{align*} 
\frac{1}{\mu (B\cap Q)} \int_{B\cap Q} \mod{f-f_{B\cap Q}} \wrt \mu 
&\leq \frac{1}{\mu (B\cap Q)} \int_{B\cap Q} \mod{f-f_{B}} \wrt \mu
   + \mod{f_B-f_{B\cap Q}}\\
& \leq \frac{2}{\mu (B\cap Q)} \int_{B\cap Q} \mod{f-f_{B}} \wrt \mu.
\end{align*} 
By Proposition \ref{p: further prop}~\rmi, we have that 
$
\mu(B\cap Q) \geq D_{a_1/(a_0\de),\de^k}^{-1} \, \mu(B),
$ 
so that 
$$
\frac{1}{\mu (B\cap Q)} \int_{B\cap Q} \mod{f-f_{B\cap Q}} \wrt \mu 
\leq \frac{2\,D_{a_1/(a_0\de),\de^k}}{\mu (B)} \int_{B} \mod{f-f_{B}} \wrt \mu.
$$
Since the ball $B$ belongs to $\cB_{a_1 \delta^k}$, 
the right hand side of the formula above is majorised
by $2\,D_{a_1/(a_0\de),\de^k} \, \widetilde{f}^{\sharp}_{a_1 \delta^k}(x)$. 

Now assume that $r_B \geq a_1 \delta^k$. 
For the sake of definiteness, suppose that $Q$ is the dyadic cube $Q_\al ^k$.
Recall that $\diam (Q) \leq a_1 \, \delta^k$, by Theorem~\ref{t: dyadic cubes}~\rmiv,  
whence $Q\cap B=Q$.  Moreover, 
$
B_{a_0 \de^k}(z_{\al}^k)
\subset Q
\subset B_{a_1 \de^k}(z_{\al}^k),
$ 
by Theorem \ref {t: dyadic cubes}~\rmv.  
Denote by $\overline{B}$ the ball $B_{a_1 \de^k}(z_{\al}^k)$.
Then, by the triangle inequality,
\begin{align*} 
\frac{1}{\mu (B\cap Q)} \int_{B\cap Q} \mod{f-f_{B\cap Q}} \wrt \mu 
&\leq \frac{1}{\mu (B\cap Q)} \int_{B\cap Q} \mod{f-f_{\overline{B}}} \wrt \mu
        + \mod{f_{\overline{B}}-f_{B\cap Q}}\\
& \leq \frac{2}{\mu (B\cap Q)} \int_{B\cap Q} \mod{f-f_{\overline{B}}} \wrt \mu\\
& \leq \frac{2}{\mu \big(B_{a_0 \de^k}(z_{\al}^k\big))} \, 
        \int_{\overline{B}} \mod{f-f_{\overline{B}}} \wrt \mu.
\end{align*} 
Now, the local doubling property implies that 
$$
\mu(\overline{B}) \leq  D_{a_1/a_0,a_0 \de^k} \, \mu \big(B_{a_0 \de^k}(z_{\al}^k)\big);
$$
hence, the right hand side can be estimated from above by 
$$
\frac{2 \, D_{a_1/a_0,a_0 \de^k}}{\mu (\overline{B})} \int_{\overline{B}} \mod{f-f_{\overline{B}}} \wrt \mu,
$$
which, in turn, may be majorised by 
$2 \, D_{a_1/a_0,a_0 \de^k} \, \widetilde{f}^{\sharp}_{a_1 \delta^k}(x)$, 
for the ball $\overline{B}$ has radius $a_1 \, \de^k$. 
By taking the supremum over all balls $B$ containing $x$ and whose centre belongs to $Q$, we get
$$
f^{\sharp,Q}(x) \leq 2\,D_{a_1/(a_0\de),\max(1,a_0)\de^k}\, \widetilde{f}^{\sharp}_{a_1 \delta^k}(x)
\quant x \in Q.
$$
The local doubling property ensures that for each $b$ in $\BR^+$ there exists a constant~$C$ such that 
$\widetilde{f}^{\sharp}_{b} \leq C \, f^{\sharp}_{2b}$.  Therefore
$$
f^{\sharp,Q}(x) \leq C\, f^{\sharp}_{2 a_1 \delta^k}(x)
\quant x \in Q.
$$
Now, if we choose the integer $k$ large enough so that $2 a_1 \delta^k \leq 1$, i.e., 
$k>\left[\log_{\delta} (1/2 a_1)\right]$, we get
$f^{\sharp}_{2 a_1 \delta^k} \leq f^{\sharp}$, which gives the desired conclusion.
\end{proof}

\noindent
Now we are ready to prove the main result of this section.

\begin{proof}[Proof of Theorem \ref{t:inequality}]
Fix an integer $k$ so large that Lemmata~\ref{l:N0} and \ref{l:sharpQ} hold.  
In particular, $k$ must be $> \big[\log_{\de} (1/2 a_1)\big]$.  
The cubes in $\cQ^k$ are pairwise disjoint and their union is a set of full measure in $M$, so that 
\begin{align*}
\left\|f\right\|^p_{\lp{M}}
& =    \sum_{Q\in \cQ^k} \|f\|^p_{\lp{Q}}\\
& \leq C\, \sum_{Q\in \cQ^k} \, \big[ \|f^{\sharp,Q}\|^p_{\lp{Q}} 
         +  \mu(Q)^{1-p} \, \|f\|^p_{L^1(Q)} \big]\\
& \leq C\, \sum_{Q\in \cQ^k} \, \big[ \|f^{\sharp}\|^p_{\lp{Q}} 
         +  \mu(Q)^{1-p} \, \|N_0 f\|^p_{L^1(Q)}\big];
\end{align*}
the first inequality above follows from Lemma \ref{l:LpL1}, 
and the second is a consequence of Lemmata \ref{l:N0} and \ref{l:sharpQ}.
Furthermore, by H\"older's inequality,
$$
\begin{aligned}
\frac{1}{\mu(Q)^{1/p'}} \, \int_Q \mod{N_0 f} \wrt \mu 
& \leq \frac{1}{\mu(Q)^{1/p'}} \, \Big[\int_Q \mod{N_0 f}^p \wrt \mu\Big]^{1/p} 
          \, \mu(Q)^{1/p'} \\
& =    \|N_0 f\|_{\lp{Q}}.  
\end{aligned}
$$
Thus,
\begin{align*}
\left\|f\right\|^p_{\lp{M}}
&\leq C\, \big[\|f^{\sharp}\|^p_{\lp{M}} +\left\|N_0 f\right\|^p_{\lp{M}} \big]\\
& \leq C\, \left\|N f\right\|^p_{\lp{M}},
\end{align*}
as required.
\end{proof}

\section{Interpolation}
\label{sec:7}

Suppose that $X$ and $Y$ are Banach spaces, and that $\te$ is in $(0,1)$.
We denote by $S$ the strip $\{z \in \BC: \Re z\in (0,1)\}$, and by $\OV S$ its closure.
We consider the class $\cF(X,Y)$ of all functions $F: \OV S \to X+Y$ 
with the following properties:
\begin{enumerate}
\item
$F$ is continuous and bounded in $\OV S$ and analytic in $S$;
\item
the functions $t\mapsto F(it)$ and $t\mapsto F(1+it)$ are continuous from $\BR$ into $X$ and $Y$ respectively;
\item
$\lim_{|t|\rightarrow +\infty} \|F(it) \|_{X}=0$ and 
$\lim_{|t|\rightarrow +\infty} \|F(1+it) \|_{Y}=0$.
\end{enumerate}
We endow $\cF(X,Y)$ with the norm
$$
\bignorm{F}{\cF(X,Y)} 
= \sup \bigset{ \max\big( \bignorm{F(it)}{X}, \bignorm{F(1+it)}{Y}\big): t \in \BR}.  
$$ 
We define the complex interpolation space $(X,Y)_{[\te]}$~by 
$$
(X,Y)_{[\te]}=\set{F(\te)\, : \, F \in \cF(X,Y)},
$$
endowed with the norm
$$
\bignorm{f}{(X,Y)_{[\te]}}
=\inf \bigset{\bignorm{F}{\cF(X,Y)} \, : \, F\in \cF(X,Y) \, \hbox{and} \, F(\te)=f}.
$$
For more on the complex interpolation method see, for instance, \cite{BL}.

\begin{theorem} \label{t: interpolation}
Suppose that $\te$ is in $(0,1)$.  The following hold:
\begin{enumerate}
\item[\itemno1]
if $p_\te$ is $2/(1-\te)$, then
$\bigl(\ld{M},\mathfrak{bmo}(M)\bigr)_{[\te]} = L^{p_\te}(M)$;
\item[\itemno2]
if $p_\te$ is $2/(2-\te)$, then
$\bigl(\frh^1(M),\ld{M}\bigr)_{[\te]} = L^{p_\te}(M)$.
\end{enumerate}
\end{theorem}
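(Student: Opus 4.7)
The plan is to establish each of (i) and (ii) by proving two inclusions, using the tools built up in Sections~\ref{sec:2}--\ref{sec:6}.

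For part (i), the forward inclusion $L^{p_\theta}(M) \subseteq (L^2(M),\mathfrak{bmo}(M))_{[\theta]}$ would come from combining the classical Riesz--Thorin identity $(L^2(M), L^\infty(M))_{[\theta]} = L^{p_\theta}(M)$ with the trivial continuous embedding $L^\infty(M) \hookrightarrow \mathfrak{bmo}(M)$ (immediate from the definition of $N_1^1$, which yields $\|f\|_{\mathfrak{bmo}} \leq 3\|f\|_\infty$), together with the monotonicity of complex interpolation in the second slot. For the reverse inclusion I would exploit the operator $N$ of Section~\ref{sec:6}: by the pointwise bound $Nf \leq 3\cM f$ and the $L^2$-boundedness of the local Hardy--Littlewood maximal operator, $N:L^2(M)\to L^2(M)$ is bounded, and by the very definition of the $\mathfrak{bmo}$-norm we have $\|Nf\|_\infty = \|f\|_{\mathfrak{bmo}}$, so $N:\mathfrak{bmo}(M)\to L^\infty(M)$ isometrically. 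Linearising the suprema defining $f^\sharp$ and $N_0 f$ via a measurable selection of extremal balls yields a family of linear operators to which the Riesz--Thorin theorem applies uniformly, giving $N:(L^2(M),\mathfrak{bmo}(M))_{[\theta]} \to L^{p_\theta}(M)$ boundedly. Combining this with Theorem~\ref{t:inequality}, $\|f\|_{p_\theta} \leq C\|Nf\|_{p_\theta} \leq C\|f\|_{(L^2,\mathfrak{bmo})_{[\theta]}}$, which closes the inclusion.

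For part (ii), the forward inclusion $(\mathfrak{h}^1(M),L^2(M))_{[\theta]} \subseteq L^{p_\theta}(M)$ follows from the continuous embedding $\mathfrak{h}^1(M)\hookrightarrow L^1(M)$ (every $\infty$-atom satisfies $\|a\|_1 \leq \|a\|_\infty \, \mu(B) \leq 1$, hence $\|f\|_1 \leq \|f\|_{\mathfrak{h}^1}$) combined with $(L^1,L^2)_{[\theta]} = L^{p_\theta}$ and monotonicity. The reverse inclusion I would obtain by duality: since $L^2$ is reflexive and $\mathfrak{h}^1\cap L^2$ is dense in both endpoints (finite atomic combinations are dense in $\mathfrak{h}^1$ and are in $L^2$), the Bergh--L\"ofstr\"om duality theorem for complex interpolation gives
\[
\bigl((\mathfrak{h}^1(M),L^2(M))_{[\theta]}\bigr)^*
= ((\mathfrak{h}^1)^*,L^2)_{[\theta]}
= (\mathfrak{bmo}(M),L^2(M))_{[\theta]}
= (L^2(M),\mathfrak{bmo}(M))_{[1-\theta]}
= L^{2/\theta}(M),
\]
where the first equality uses Theorem~\ref{t: duality}, the third uses the symmetry of complex interpolation, and the fourth uses part~(i). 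Since $p_\theta' = 2/\theta$, the dual of $(\mathfrak{h}^1,L^2)_{[\theta]}$ coincides with the dual of $L^{p_\theta}$; combined with the forward inclusion and the reflexivity of $L^{p_\theta}$ for $\theta\in(0,1)$, this forces the equality $(\mathfrak{h}^1(M),L^2(M))_{[\theta]} = L^{p_\theta}(M)$ with equivalent norms.

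The two main obstacles are: first, the sublinear interpolation of $N$ in part~(i), which is not immediately covered by the usual Riesz--Thorin statement for linear operators and thus requires either a linearisation of the supremum or an appeal to a version of Calder\'on's theorem for sublinear operators into Banach lattices; and second, the duality argument in part~(ii), where one must verify the hypotheses of the interpolation-duality theorem (density of $\mathfrak{h}^1\cap L^2$ in both endpoints) and then carefully pass from the coincidence of duals to the equality of the underlying spaces, exploiting reflexivity of $L^{p_\theta}$ rather than of $\mathfrak{h}^1$ itself, which fails.
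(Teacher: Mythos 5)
Your proposal is correct and follows essentially the same route as the paper: for part (i) the paper likewise linearises $f^\sharp$ and $N_0$ via measurable selections $\phi,\eta$ of balls and unimodular weights, interpolates the resulting linear operators (uniformly bounded from $L^2$ to $L^2$ and from $\mathfrak{bmo}$ to $L^\infty$), takes suprema, and concludes with Theorem~\ref{t:inequality}. For part (ii) the paper simply invokes the duality corollary of Bergh--L\"ofstr\"om and omits the details; the argument you supply (density of $\frh^1\cap L^2$ in both endpoints, Theorem~\ref{t: duality}, part (i), and the passage from equal duals to equal spaces) is exactly the intended one.
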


\begin{proof}
First we prove \rmi.  Observe that 
$$
L^{p_\te}(M) 
= \bigl(\ld{M},\ly{M}\bigr)_{[\te]}
\subseteq \bigl(\ld{M},\mathfrak{bmo}(M)\bigr)_{[\te]}; 
$$ 
the containment above follows from
the fact that $\ly{M}\subset \mathfrak{bmo}(M)$. 

In order to prove the reverse inclusion,
suppose that $f$ is in the interpolation space
$\bigl(\ld{M},\mathfrak{bmo}(M)\bigr)_{[\te]}$.  Then, given $\ep>0$ there
exists a function $F$ in $\cF(L^2(M),\mathfrak{bmo}(M))$  such that $F(\te) = f$ and
$$
\|F\|_{\cF(L^2,\mathfrak{bmo})}
\leq \|f\|_{(L^2,\mathfrak{bmo})_{[\te]}}+\ep.
$$
Let $\phi$ be any measurable function which associates to any point 
$x$ in $M$ a ball $\phi(x)$ in $\cB_1(x)$. Furthermore, let 
$\eta : M \times M \to \BC$ be any measurable function with $\mod{\eta} = 1$.  
We consider the linear operators $S^{\phi,\eta}$ and $T^{\eta}$ which act on 
a function $f$ in $\ld{M}$ as follows:
$$
S^{\phi,\eta} f (x)
= \frac{1}{\mu\bigl(\phi(x)\bigr)}
\int_{\phi(x)} \bigl[ f -f_{\phi(x)}\bigr]
\, \eta(x,\cdot) \wrt \mu
\quant x \in M
$$
and
$$
T^{\eta} f (x)
= \frac{1}{\mu\bigl(B_1(x)\bigr)}
\int_{B_1(x)} f \, \eta(x,\cdot) \wrt \mu
\quant x \in M.
$$
Then
\begin{equation}
\sup_{\phi,\eta} \mod{ S^{\phi,\eta} f}
= f^\sharp
\qquad\hbox{and} \qquad
\sup_{\eta} \mod{ T^{\eta} f}
= N_0 f.
\end{equation}
For each $\phi$ and $\eta$ as before, consider the functions
$S^{\phi,\eta}F$ and $T^{\eta} F$, where $F$ is in the space $\cF(L^2(M),\mathfrak{bmo}(M))$.

We \emph{claim} that $S^{\phi,\eta}F$ and $T^{\eta}F$ belong to
the class $\cF(L^2(M),L^\infty(M))$, 
$$
\bignorm{S^{\phi,\eta}F}{\cF(L^2,L^\infty)}
\leq C \, \bignorm{F}{\cF(L^2,\mathfrak{bmo})}
$$
and
$$
\bignorm{T^{\eta}F}{\cF(L^2,L^\infty)}
\leq C \, \bignorm{F}{\cF(L^2,\mathfrak{bmo})}.
$$
Indeed, recall that $g^\sharp \leq 2\, \cM g$ and that $\cM$ is bounded
on $\ld{M}$.  Thus, 
$$
\bignorm{S^{\phi,\eta}F(it)}{2}
 \leq \|F(it)^\sharp \|_{2} 
 \leq 2 \, \|\cM F(it)\|_{2} 
 \leq C \, \|F(it)\|_{2}.
$$
Note that the constant $C$ in the above inequality does 
not depend on $\phi$ and $\eta$.  
Moreover,
$$
\|S^{\phi,\eta}F(1+it)\|_{\infty}
 \leq \|F(1+it)^\sharp \|_{\infty} 
 \leq  \|F(1+it)\|_{\mathfrak{bmo}}.
$$
Similarly,
$$
\bignorm{T^{\eta}F(it)}{2}
 \leq  \|\cM F(it)\|_{2} 
 \leq C \, \|F(it)\|_{2};
$$
and
$$
\|T^{\eta}F(1+it)\|_{\infty}
 \leq \|N_0 F(1+it) \|_{\infty} 
 \leq  \|F(1+it)\|_{\mathfrak{bmo}},
$$
where $C$ is independent of $\eta$.  Hence
$$
\begin{aligned}
\norm{S^{\phi,\eta}f}{p_{\theta}} &= \norm{S^{\phi,\eta}F(\te)}{(L^2, L^{\infty})_{[\te]}}\\
& \leq \norm{S^{\phi,\eta}F}{\cF(L^2,L^\infty)}\\
& \leq C \, \norm{F}{\cF(L^2,\mathfrak{bmo})} \\
& \leq C \bigl(\|f\|_{(L^2,\mathfrak{bmo})_{[\te]}}+\epsilon \bigr).
\end{aligned}
$$
By taking the infimum over all $\epsilon>0$ we get
$$
\norm{S^{\phi,\eta}f}{p_{\theta}} \leq C \,\|f\|_{(L^2,\mathfrak{bmo})_{[\te]}}.
$$
Now, by taking the supremum over all $\phi$ and $\eta$ 
we obtain the estimate
\begin{equation}\label{e:sharp}
\norm{f^\sharp}{p_{\theta}}
\leq C \, \|f\|_{(L^2,\mathfrak{bmo})_{[\te]}}.
\end{equation}
Similarly, we get
$$
\norm{T^{\eta}f}{p_{\theta}} \leq C \|f\|_{(L^2,\mathfrak{bmo})_{[\te]}}
$$
and taking the supremum over all functions $\eta$ we have
\begin{equation}\label{e:N0}
\bignorm{N_0 f}{p_{\theta}}
\leq C \, \|f\|_{(L^2,\mathfrak{bmo})_{[\te]}}.
\end{equation}
Now, applying Theorem~\ref{t:inequality} and combining (\ref{e:sharp}) and (\ref{e:N0}) 
we may conclude that
\begin{align*}
\bignorm{f}{p_\te} &\leq C\, \left\|N f\right\|_{p_\te}\\
&\leq C \, \big(\bignorm{f^\sharp}{p_{\theta}}+ \bignorm{N_0 f}{p_{\theta}}  \big)\\
&\leq C\, \|f\|_{(L^2,\mathfrak{bmo})_{[\te]}}
\quant f \in \bigl(\ld{M},\mathfrak{bmo}(M)\bigr)_{[\te]} 
\end{align*}
and the required inclusion
$\bigl(\ld{M},\mathfrak{bmo}(M)\bigr)_{[\te]} \subset L^{p_\te}(M)$ follows.  

To prove \rmii, we may apply a duality argument \cite[Corollary~4.5.2]{BL}.  We omit the details. 
\end{proof}

\section{On the {$\frh^1-L^1$} boundedness of operators}
\label{sec:8}

One of the reasons which make $\frh^{1}(M)$ useful is 
that to prove that a linear operator $T$ maps 
$\frh^{1}(M)$ to a Banach space $X$ it suffices
to prove that $T$ is uniformly bounded on atoms. 
This extends to the space $\frh^{1}(M)$ the analogous result for $H^1(M)$
(see \cite{MSV}).

We need more notation. Suppose that $p$ is in $[1,\infty)$. For each closed ball $B$ in $M$,
we denote by $L^p(B)$ the space of all functions in $L^p(M)$ which are supported in $B$.
The union of all spaces $L^p(B)$ as $B$ varies
over all balls coincides with the space $L^p_c(M)$ of
all functions in $L^p(M)$ with compact support.
Fix a reference point $o$ in $M$ and for each positive integer $k$
denote by $B_k$ the ball centred at~$o$ with radius~$k$. 
A convenient way of topologising $L^p_c(M)$
is to interpret $L^p_c(M)$ as the strict inductive limit
of the spaces $L^p_c(B_k)$ (see \cite[II, p.~33]{Bou} for the definition of
the strict inductive limit topology).  
We denote by $X^p$ the space $L^p_c(M)$ with this topology,
and write~$X_k^p$ for $L^p_c(B_k)$.
It is well known that 
The topological dual of $X^p$ is $L^{p'}_{\textrm{loc}}(M)$, where $p'$ denotes the index conjugate to $p$.

Note that the spaces $X_k^p$ and $X^p$ differ from the spaces, denotes exactly in the same way,
considered in \cite{MSV}, for functions in our version of $X_k^p$ and $X^p$ need
not have vanishing integral. 

\begin{theorem}\label{t:fin}
 Suppose that $p$ is in $(1,\infty)$ and that $T$ is a $L^1(M)$-valued linear operator defined on $\frh^{1,p}_{\textrm{fin}}(M)$ 
with the property that 
$$
A:=\sup\{ \norm{T a}{1}: \hbox{\emph{$a$ is a $p$-atom}} \} 
< \infty.
$$
Then there exists a unique bounded operator $\wt T$ from $\frh^1(M)$ to $L^1(M)$ which extends~$T$.
\end{theorem}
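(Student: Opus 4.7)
The plan is to extend $T$ by continuity from the dense subspace $\frh^{1,p}_{\mathrm{fin}}(M) \subset \frh^1(M)$. By linearity of $T$ on finite combinations, for any $f = \sum_{j=1}^N c_j \, a_j \in \frh^{1,p}_{\mathrm{fin}}(M)$ the triangle inequality yields the immediate estimate $\norm{Tf}{1} \leq A \sum_j \mod{c_j}$, and taking the infimum over all finite atomic decompositions of $f$ gives $\norm{Tf}{1} \leq A \, \norm{f}{\frh^{1,p}_{\mathrm{fin}}}$. Hence it suffices to establish the norm equivalence
\begin{equation*}
\norm{f}{\frh^{1,p}_{\mathrm{fin}}} \leq C \, \norm{f}{\frh^{1,p}}
\quant f \in \frh^{1,p}_{\mathrm{fin}}(M),
\end{equation*}
after which $\norm{Tf}{1} \leq C A \, \norm{f}{\frh^{1,p}}$, together with the density of $\frh^{1,p}_{\mathrm{fin}}(M)$ in $\frh^1(M)$, yields a unique continuous extension $\wt T \colon \frh^1(M) \to L^1(M)$.

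The heart of the proof is the displayed norm equivalence, and this is the main obstacle. I would adapt the argument of Meda--Sj\"ogren--Vallarino \cite{MSV} to the present setting. Fix $f \in \frh^{1,p}_{\mathrm{fin}}(M)$ and let $B^*$ be a ball containing $\mathrm{supp}(f)$. Given $\varepsilon > 0$, choose an a priori infinite atomic decomposition $f = \sum_{j=1}^\infty \lambda_j \, a_j$ into $p$-atoms at scale at most $1$ with $\sum_j \mod{\lambda_j} \leq (1+\varepsilon) \, \norm{f}{\frh^{1,p}}$. Split the index set into $J_1$, the atoms whose supports intersect $B^*$, and $J_2$, those that do not. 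Since the atoms have scale $\leq 1$, the partial sum $\sum_{j \in J_1} \lambda_j \, a_j$ is supported in a fixed dilate $2B^*$, while $\sum_{j \in J_2} \lambda_j \, a_j$ is supported outside $B^*$; matching the two contributions against $f$ (which is supported in $B^*$) on and off $B^*$ forces the second sum to vanish identically in $L^1$. This reduces the analysis to a (still a priori infinite) decomposition of $f$ whose atoms are all supported in $2B^*$.

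The remaining step is to extract a finite sub-decomposition. Cover $2B^*$ by a bounded number $N$ of unit balls via Lemma~\ref{l:bolle}, form a partition of unity subordinated to this cover, and combine it with Lemma~\ref{l: economical decomposition} (together with the telescoping construction in the proof of Theorem~\ref{t: equiv ionicalpha}, which absorbs the averages introduced by the partition of unity into a controlled number of scale-$1$ atoms) to produce a finite atomic decomposition of $f$ whose total coefficient is bounded by $\norm{f}{\frh^{1,p}}$ up to the geometric constants $D_{\tau,s}$ of Remark~\ref{r: geom I}. The delicate point is that the control must come \emph{purely} from the $\frh^{1,p}$-norm and not from $\norm{f}{p}$, which may blow up as atoms concentrate; the support localization of the previous paragraph is precisely what allows one to absorb the infinite tail into a bounded number of global atoms without paying this $L^p$ price.

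Once the norm equivalence is in hand, $\wt T$ is defined by $\wt T f := \lim_{n\to\infty} T f_n$ for any sequence $f_n \in \frh^{1,p}_{\mathrm{fin}}(M)$ with $f_n \to f$ in $\frh^{1,p}(M)$. This limit exists in $L^1(M)$ and is independent of the approximating sequence by the Cauchy estimate $\norm{Tf_n - Tf_m}{1} \leq C A \, \norm{f_n - f_m}{\frh^{1,p}}$; uniqueness and linearity of $\wt T$ are then immediate, and the operator norm bound $\|\wt T\|_{\frh^1 \to L^1} \leq CA$ follows.
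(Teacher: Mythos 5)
Your proposal takes a genuinely different route from the paper, and there is a gap in the route you chose. The paper does not prove or use the norm equivalence $\norm{f}{\frh^{1,p}_{\mathrm{fin}}}\leq C\norm{f}{\frh^{1,p}}$; it argues by duality. From uniform boundedness on atoms and Lemma~\ref{l: economical decomposition} one gets that $T$ is bounded on each $X^p_k$, hence on $X^p$, so the transpose $T^*$ maps $\ly{M}$ into $L^{p'}_{\mathrm{loc}}(M)$. One then repeats the computations from the proof of Theorem~\ref{t: duality}~\rmii\ (testing $T^*f$ against $(\vp-\vp_B)/2\mu(B)^{1/p}$ for small balls and against $\vp/\mu(B)^{1/p}$ for unit balls) to conclude $\norm{T^*f}{\mathfrak{bmo}}\leq 3A\norm{f}{\infty}$, and finally uses the $\frh^1$--$\mathfrak{bmo}$ pairing to bound $\norm{Tg}{1}$ for $g\in\frh^{1,p}_{\mathrm{fin}}(M)$. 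This sidesteps the finite-decomposition norm entirely, which is precisely why it works cleanly.

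The gap in your sketch is the support-localization step. Writing $g_1=\sum_{j\in J_1}\la_j a_j$ and $g_2=\sum_{j\in J_2}\la_j a_j$, the identity $g_1+g_2=f$ together with $\mathrm{supp}\,f\subset B^*$ and $\mathrm{supp}\,g_2\subset (B^*)^c$ only forces $g_2=-g_1$ on $(B^*)^c$; it does not force $g_2=0$. The two infinite sums can, and generically will, cancel on the annulus $2B^*\setminus B^*$, so you cannot discard the $J_2$ tail. Even granting a reduction to atoms supported in $2B^*$, the remaining decomposition is still infinite, and a partition of unity subordinated to a finite cover of $2B^*$ applied termwise still yields an infinite series; nothing in Lemma~\ref{l:bolle}, Lemma~\ref{l: economical decomposition}, or the telescoping argument of Theorem~\ref{t: equiv ionicalpha} produces a \emph{finite} decomposition from an infinite one without paying the $L^p$ price you correctly identify as the obstruction. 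The equivalence $\norm{\cdot}{\frh^{1,p}_{\mathrm{fin}}}\approx \norm{\cdot}{\frh^{1,p}}$ is a delicate statement (false for $p=\infty$), and neither this paper nor the cited reference \cite{MSV} establishes it; \cite{MSV} also proceeds by duality. You should replace the norm-equivalence step by the transpose-operator argument.
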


\begin{proof}
Suppose that $B$ is a ball of radius $r_B \geq 1$. 
For each $f \in \lp{B}$ such that $\norm{f}{p}=1$ set $a=\mu (B)^{-1/p'} f$, where $p'$ denotes 
the index conjugate to $p$. 
Then $a$ is a $p$-atom at scale $r_B$ and by Lemma \ref{l: economical decomposition} there exist 
global $p$-atoms at scale $1$, $a_1, \ldots, a_N$ such that $a=\sum_{j=1}^{N}c_j a_j$, 
with $|c_j|\leq C$, where $C$ and $N$ are constants, which depend only on $r_B$ and $M$.
Thus we get
\begin{align*}
\norm{Tf}{1}&=\norm{T(\mu (B)^{1/p'} a)}{1}\\
&\leq \mu (B)^{1/p'}\sum_{j=1}^{N}|c_j| \, \norm{Ta_j}{1}\\
&\leq C\, N\, A\, \mu (B)^{1/p'} 
\end{align*}
for every $f \in \lp{B}$ such that $\norm{f}{p}=1$.

In particular, the restriction of $T$ to $X^p_k$ is bounded from $X^p_k$ to $L^1(M)$ for each~$k$. 
Therefore, $T$ is bounded from $X^p$ to $L^1(M)$.
It follows that the transpose operator $T^*$ is bounded from $L^{\infty}(M)$ to the dual of 
$X^p$, which can be identified with $L^{p'}_{\textrm{loc}}(M)$.
Therefore, for every $f$ in $L^{\infty}(M)$ and every $p$-atom $a$ we have
$$
\prodo{Ta}{f}=\prodo{a}{T^*f}=\int_{M} a\, T^*f \wrt {\mu },
$$ 
so that
\begin{equation} \label{e:est infty}
\Bigl| \int_M  a\, T^*f \wrt {\mu } \Bigr|
= |\prodo{Ta}{f}|
\leq \norm{Ta}{1}\norm{f}{\infty} 
\leq A\, \norm{f}{\infty}.
\end{equation} 
Now we show that $T^* f$ belongs to $\mathfrak{bmo}(M)$ and that 
$$
\norm{T^*f}{\mathfrak{bmo}} \leq 3\, A\,  \norm{f}{\infty}
\quant f \in L^{\infty}(M).
$$
Suppose that $B$ is a ball of radius at most $1$; we have
\begin{equation} \label{e:est inftyIII}
\Bigl[\int_B \mod{T^*f-(T^*f)_B}^{p'} \wrt \mu\Bigr]^{1/p'}
= \sup_{\norm{\vp}{\lp{B}} = 1}
      \Bigmod{\int_B \vp \, \bigl(T^*f-(T^*f)_B\bigr) \wrt \mu}.
\end{equation} 
Observe that 
\begin{equation} \label{e:est inftyII}
\begin{aligned}
\int_B \vp \, \bigl(T^*f-(T^*f)_B\bigr) \wrt \mu
& = \int_B \bigl(\vp-\vp_B\bigr) \, \bigl(T^*f-(T^*f)_B\bigr) \wrt \mu \\
& = \int_B \bigl(\vp-\vp_B\bigr) \, T^*f \wrt \mu.
\end{aligned}
\end{equation} 
Since $\norm{\vp}{\lp{B}} = 1$,
$$
\bigmod{\vp_B}
 \leq \Bigl[\frac{1}{\mu(B)} \, \int_B \mod{\vp}^p \wrt \mu\Bigr]^{1/p} \;
 \leq \;\mu(B)^{-1/p}.
$$
Then 
$$
\begin{aligned}
\norm{\vp-\vp_B}{\lp{B}}
& \leq \norm{\vp}{\lp{B}} + \mod{\vp_B} \, \mu(B)^{1/p} \\
& \leq 2,
\end{aligned}
$$
so that $(\vp-\vp_B)/(2\,\mu(B)^{1/p})$ is a standard $p$-atom, and 
\eqref{e:est infty} implies that 
$$
\Bigmod{\int_B (\vp-\vp_B) \,\,  T^*f \wrt \mu}
\leq 2\,A\, \norm{f}{\infty}\,\mu(B)^{1/p}.
$$
Thus, by \eqref{e:est inftyII} and \eqref{e:est inftyIII}, we may conclude that
for every ball $B$ of radius at most~$1$
$$
\Bigl[\frac{1}{\mu(B)} \, 
\int_B \mod{T^*f-(T^*f)_B}^{p'} \wrt \mu\Bigr]^{1/p'}
\leq 2\,A\, \norm{f}{\infty}.
$$
Now take a ball $B$ of radius exactly equal to $1$. We have
$$
\Bigl[\int_B \mod{T^*f}^{p'} \wrt \mu\Bigr]^{1/p'}
= \sup_{\norm{\vp}{\lp{B}} = 1}
      \Bigmod{\int_B \vp \, T^*f \wrt \mu}.
$$
The function $\vp/\mu(B)^{1/p}$ is a global $p$-atom, and, by \eqref{e:est infty}, 
$$
\Bigmod{\int_B \vp \,\,  T^*f \wrt \mu}
\leq A\, \norm{f}{\infty}\,\mu(B)^{1/p}.
$$
Therefore, for every ball $B$ of radius $1$
$$
\Bigl[\frac{1}{\mu(B)} \, 
\int_B \mod{T^*f}^{p'} \wrt \mu\Bigr]^{1/p'}
\leq A\, \norm{f}{\infty}.
$$
Combining the above estimates, we get
$$
\norm{T^*f}{\mathfrak{bmo}} \leq \norm{T^*f}{\mathfrak{bmo}^{p'}} \leq 3\, A\,  \norm{f}{\infty}
\quant f \in L^{\infty}(M),
$$
as required.

Now we prove that $T$ extends to a bounded operator from $\frh^1(M)$ to $L^1(M)$. 
Observe that $X^p$ and $\frh^{1,p}_{\textrm{fin}}(M)$ coincide as vector spaces. 
For every $g$ in $\frh^{1,p}_{\textrm{fin}}(M)$ and every $f$ in $L^{\infty}(M)$
$$
\begin{aligned}
 |\prodo{Tg}{f}|&=|\prodo{g}{T^*f}|\\
& \leq C\, \norm{g}{\frh^1} \norm{T^*f}{\mathfrak{bmo}}\\
& \leq 3\, C\, A\, \norm{g}{\frh^1} \norm{f}{\infty}.
\end{aligned}
$$
By taking the supremum of both sides over all functions $f$ in $L^{\infty}(M)$ with $\norm{f}{\infty}=1$, we obtain that
$$
\norm{T g}{1} \leq 3\, C\, A\, \norm{g}{\frh^1} 
\quant g \in \frh^{1,p}_{\textrm{fin}}(M).
$$
Since $\frh^{1,p}_{\textrm{fin}}(M)$ is dense in $\frh^1(M)$ with respect to the norm of $\frh^1(M)$, the required conclusion follows by a density argument.
\end{proof}

Suppose that $T$ is a bounded linear operator on $L^2(M)$. Then $T$ is automatically defined on $\frh^{1,2}_{\textrm{fin}}(M)$. If we assume that
$$
A:=\sup\{ \norm{T a}{1}: \hbox{\emph{$a$ is a $2$-atom}} \} 
< \infty,
$$ 
then, by the previous theorem, the restriction of $T$ to $\frh^{1,2}_{\textrm{fin}}(M)$ has a unique bounded extension to an operator $\wt T$ from 
$\frh^{1}(M)$ to $L^1(M)$. We wonder if the operators $T$ and $\wt T$ are consistent, i.e., if they agree on the intersection $\frh^1(M) \cap L^2(M)$
of their domains. As in the case of the same problem on the space $H^1(M)$ (see \cite[Prop~4.2]{MSV}), the answer is in the affirmative, as shown in the next
proposition.

\begin{proposition}
Suppose that $T$ is a bounded linear operator on $L^2(M)$ and that
$$
A:=\sup\{ \norm{T a}{1}: \hbox{\emph{$a$ is a $2$-atom}} \} 
< \infty.
$$  
Denote by $\wt T$ the unique bounded extension of the restriction of $T$ to $\frh^{1,2}_{\textrm{fin}}(M)$ to an operator from $\frh^{1}(M)$ to $L^1(M)$.
Then the operators $T$ and $\wt T$ coincide on $\frh^1(M) \cap L^2(M)$.
\end{proposition}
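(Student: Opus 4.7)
The strategy is to pass to the limit in a simultaneous approximation of $f$ by elements of $\frh^{1,2}_{\textrm{fin}}(M)$. Specifically, suppose that we can find a sequence $(f_n) \subset \frh^{1,2}_{\textrm{fin}}(M) = L^2_c(M)$ with $f_n \to f$ in \emph{both} $\frh^1(M)$ and $L^2(M)$. Then $\wt T f_n = T f_n$ for each $n$ by construction of $\wt T$; moreover, $T f_n \to T f$ in $L^2(M)$ by $L^2$-boundedness of $T$, and $\wt T f_n \to \wt T f$ in $L^1(M)$ by the $\frh^1 \to L^1$ boundedness of $\wt T$. Extracting a common almost-everywhere convergent subsequence from these two modes of convergence, the two pointwise limits of $T f_n = \wt T f_n$ must agree, yielding $T f = \wt T f$ almost everywhere.

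Thus the proposition reduces to the following density lemma: for every $f \in \frh^1(M) \cap L^2(M)$ there exists a sequence $(f_n)$ in $L^2_c(M)$ converging to $f$ in both $\frh^1(M)$ and $L^2(M)$. This is the main technical step, and is where the key obstacle lies. The difficulty is that neither obvious candidate works: an arbitrary atomic decomposition gives partial sums that converge in $\frh^1$ but generally \emph{not} in $L^2$ (for instance, if atoms are supported in shrinking balls, $\sum |\lambda_j|\,\norm{a_j}{2}$ may diverge even when $\sum|\lambda_j|<\infty$), while a spatial truncation $f\,\chi_{B_n(o)}$ converges in $L^2$ by dominated convergence but its convergence in $\frh^1$ is not automatic.

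The lemma can be established by performing a Calder\'on--Zygmund decomposition of $f$ at heights $\{2^k\}_{k\in\BZ}$, using the dyadic cubes provided by Theorem~\ref{t: dyadic cubes} and Proposition~\ref{p: further prop}, together with the weak-type $(1,1)$ estimate for the local Hardy--Littlewood maximal operator $\cM$ recalled in Section~\ref{sec:6}. This produces an atomic decomposition $f = \sum_{k,j} \la_{k,j}\, a_{k,j}$ for which $\sum_{k,j}|\la_{k,j}| \lesssim \norm{f}{\frh^1}$ (yielding $\frh^1$-convergence of partial sums), and in which the $L^2$-norms of the blocks are controlled via the level-set measures of $\cM f$ and an orthogonality argument, yielding $L^2$-convergence of partial sums as well. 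At small scales one gets standard atoms with cancellation; at large scales one gets global atoms without cancellation, which is permissible under Definition~\ref{d: atom}. Once this CZ-type atomic decomposition is available, the three-line argument of the first paragraph concludes the proof.
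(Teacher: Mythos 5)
Your strategy is genuinely different from the paper's. The paper proves this proposition by a duality argument: for $f \in L^2(M)\cap L^\infty(M)$ and $g \in L^2_c(M)$, one writes $\int_M g\, T^*f \wrt\mu = \int_M Tg\, f\wrt\mu = \int_M \wt Tg\, f\wrt\mu = \langle g, (\wt T)^*f\rangle = \int_M g\,(\wt T)^*f\wrt\mu$, concludes that $T^*f = (\wt T)^*f$ a.e.\ because $L^2_c(M)=X^2$ separates points of $L^2_{\mathrm{loc}}(M)$, and then reverses the pairing to deduce $Tg=\wt Tg$ for $g\in \frh^1(M)\cap L^2(M)$. This is entirely self-contained given the duality theorem $\bigl(\frh^1\bigr)^*\cong\gbmo{M}$ already established in Section~\ref{sec:5}, and requires no new approximation machinery. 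Your approach instead reduces the statement to a simultaneous-density lemma: every $f\in\frh^1(M)\cap L^2(M)$ is a limit in \emph{both} norms of functions in $L^2_c(M)$. The reduction itself (passing to a common a.e.\ convergent subsequence) is correct, and you are right to flag that neither an arbitrary atomic decomposition nor spatial truncation obviously gives both modes of convergence.

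The gap is that the density lemma is the entire content of the argument and you do not prove it; you only sketch a Calder\'on--Zygmund decomposition and assert that it yields simultaneous $\frh^1$ and $L^2$ convergence of partial sums. In this generality (local doubling only, no uniform ball size condition, possibly exponential volume growth) such a decomposition with the claimed $L^2$ control of blocks via an ``orthogonality argument'' is a nontrivial technical result, and as written there is no verification that the candidate decomposition converges to $f$ in $L^2$ rather than merely in $\frh^1$ or in measure. This is precisely the kind of subtlety that motivated the duality route both here and in the analogous $H^1$ result of \cite{MSV}, which the paper explicitly cites as its model. To turn your proposal into a complete proof you would need to state and prove the density lemma in full; until then the argument does not close.
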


\begin{proof}
Assume that $f$ is in $L^2(M) \cap L^{\infty}(M)$ and that $g$ is in $L^2_{c}(M)$. 
Denote by $T^*$ the transpose operator of $T$ (as an operator on $L^2(M)$). Then 
$$
\int_{M} g\, T^*f \wrt {\mu }
= \int_{M} Tg\, f \wrt {\mu }.
$$
Since $g$ is in $\frh^{1,2}_{\textrm{fin}}(M)$ and the operators $T$ and 
$\wt T$ agree on $\frh^{1,2}_{\textrm{fin}}(M)$, we get
$$
\int_{M} Tg\, f \wrt {\mu }
= \int_{M} \wt T g\, f \wrt {\mu }.
$$
Denote by $(\wt T)^*$ the transpose of $\wt T$ as an operator from $\frh^{1}(M)$ to $L^1(M)$. 
Then
$$
\int_{M} \wt T g\, f \wrt {\mu }
= \prodo{g}{(\wt T)^*f}.
$$
Since $(\wt T)^*f$ is in $\mathfrak{bmo}(M)$ and $g$ is in $\frh^{1,2}_{\textrm{fin}}(M)$, 
we can write the last scalar product $\prodo{g}{(\wt T)^*f}$ (with respect
to the duality between $\frh^1(M)$ and $\mathfrak{bmo}(M)$) as
$$
\prodo{g}{(\wt T)^*f}=\int_{M}  g\, (\wt T)^*f \wrt {\mu }.
$$
Thus, combining the above equalities, we obtain that
$$
\int_{M}  g\, [T^*f-(\wt T)^*f] \wrt {\mu }=0
\quant g \in L^2_{c}(M),
$$
i.e., for all $g$ in $X^2$. This implies that $T^*f-(\wt T)^*f=0$ is in the dual space of $X^2$, i.e., in $L^2_{\textrm{loc}}(M)$. Thus $T^*f=(\wt T)^*f$
almost everywhere.\\
Now, suppose that $f$ is in $L^2(M) \cap L^{\infty}(M)$ and that $g$ is in $\frh^1(M)\cap L^{2}(M)$. Then
$$
\begin{aligned}
\int_{M} Tg\, f \wrt {\mu } &=\int_{M} g\, T^*f \wrt {\mu }\\
&=\int_{M}  g\, (\wt T)^*f \wrt {\mu }\\
&=\int_{M} \wt T g\, f \wrt {\mu }.
\end{aligned}
$$
Thus we have obtained that
$$
\int_{M} [Tg-\wt T g]\, f \wrt {\mu } =0
$$
for an arbitrary $f$ in $L^2(M) \cap L^{\infty}(M)$. This implies that $Tg=\wt T g$ for all $g$ in $\frh^1(M)\cap L^{2}(M)$.
\end{proof}

\section{Applications to SIO}
\label{sec:9}

The purpose of this section is to show that the Hardy space $\ghu{M}$
may be used to obtain endpoint estimates for interesting 
singular integral operators on Riemannian manifolds.  

Hereafter in this section, we assume that $M$ is a complete connected 
noncompact $n$-dimensional Riemannian manifold with \emph{bounded geometry}, 
that is with Ricci curvature bounded from below and positive injectivity radius. 
We view $M$ as a measured metric space with respect to the Riemannian distance and measure. 
Clearly the MP property holds (with $R_0=0$).
Furthermore, it is well known that manifolds with bounded geometry possess the LDP, 
as a consequence of the Bishop-Gromov comparison theorem 
(see, for instance, \cite{Gr1}, \cite[Thm III.4.5]{Ch}).  
Thus, the theory of local Hardy spaces $\ghu{M}$ developed
in the previous chapters applies to this setting.  
Denote by~$-\cL$ the Laplace--Beltrami operator on $M$: 
$\cL$ is a symmetric operator on $C_c^\infty(M)$ and its closure
is a self adjoint operator on $\ld{M}$ which we still denote by~$\cL$. 

We consider the (translated) Riesz transforms $R_a := \nabla (a\cI+\cL)^{-1/2}$, where $\nabla$
denotes the Riemannian gradient, and $a$ is a positive
number, and spectral multipliers of $\cL$ satisfying a Mihlin type condition at infinity.

The latter operators are treated in \cite{T2} and in \cite{MMV2}.
A comparison between the results obtained therein and our result
is in order. 
We extend the result in \cite{T2} by relaxing significantly the
assumptions on the geometry of $M$, as already illustrated in the Introduction.
In \cite{MMV2} the Riemannian manifold $M$
is assumed to have bounded geometry in the same sense as here,
but an additional hypothesis is made, i.e., that the bottom $b$ of the 
$L^2$ spectrum of $\cL$ is strictly positive.  This
assumption rules out, for instance, all Riemannian manifolds of
polynomial volume growth \cite{Br}.  The reason for this additional assumption 
is that the local Hardy space $H_1^1(M)$ used in \cite{MMV2}
is known to interpolate with $\ld{M}$ to give $\lp{M}$, $1<p<2$,
only when $b>0$.  

The problem of establishing endpoint estimates for $R_a$ when $p=1$
in the setting of noncompact Riemannian manifolds has been widely studied. 
In particular, T. Coulhon and X.T. Doung \cite{CD} proved
that if $M$ is locally doubling, of exponential 
growth, and supports an $L^2$-scaled Poincar\'{e} inequality, 
then $R_a$ is of weak type $1$. 
Russ \cite{Ru} complemented this result by showing that, for $a$ large enough,  
$R_a$ is bounded from the atomic Hardy space $H_1^1(M)$ to $L^1(M)$. 
Note, however, that Russ' result is known to interpolate with 
$\ld{M}$ to give $\lp{M}$ estimates only when $M$ has bounded geometry
and spectral gap (see \cite{CMM1} and the remarks above).

Here we prove, under the assumption that $M$ has bounded geometry, 
that if $a$ is suitably large, then $R_a$ is bounded from $\frh^1(M)$ to $L^1(M)$. 
This result complements the analogous result in \cite{CMM1}. 

\subsection{Spectral multipliers}
\label{sec: Hone}

First we define the class of symbols which will 
be needed in the statement of Theorem~\ref{t: lim huno}.  

\begin{definition}  \label{d: Hormander at infinity}
Suppose that $J$ is a positive integer and that $W$
is in $\BR^+$.
Denote by~$\bS_{W}$ the strip $\{ \zeta \in \BC: \Im (\zeta)
\in (-W,W)\}$ and by $H^\infty (\bS_{W};J)$ the vector space of 
all bounded \emph{even} holomorphic functions~$f$ in $\bS_{W}$ for which
there exists a positive constant $C$ such that
\begin{equation} \label{f: SsigmaJ}
\mod{D^j f(\zeta)} 
\leq {C} \, {(1+\mod{\zeta})^{-j}}
\quant \zeta\in \bS_{W} \quant j \in \{0,1,\ldots,J\}.
\end{equation}
We denote by $\norm{f}{\bS_{W};J}$ the infimum of all constants $C$ 
for which (\ref{f: SsigmaJ}) holds.
If $\norm{f}{\bS_{W};J} < \infty$ we say that $f$ satisfies a \emph{Mihlin 
condition of order $J$ at infinity on $\bS_{W}$}.
\end{definition}


Denote by $\om$ an even function in 
$C_c^\infty(\BR)$ which is supported in $[-3/4,3/4]$, is equal to~1
in $[-1/4,1/4]$, and satisfies 
$$
\sum_{j\in \BZ} \om(t-j) = 1
\quant t \in \BR.
$$
Denote by $\cD$ the operator $\sqrt{\cL-b}$, where $b$ denotes the bottom
of the $L^2$ spectrum of $\cL$.  Clearly spectral multipliers of $\cL$
may equivalently be expressed as spectral multipliers of $\cD$ 
(with a different multiplier).  
Recall that the heat semigroup is the one-parameter family $\{\cH_t\}_{t\geq 0}$ 
defined, at least on $\ld{M}$, by 
$$
\cH_t f:= \e^{-t\cL}f
\quant f \in \ld{M}.
$$ 
It is well known that $\cH_t$ extends to a contraction semigroup
on $\lp{M}$ for all $p\in [1,\infty]$.  Furthermore,
since $M$ has Ricci curvature bounded from below, 
the heat semigroup $\{\cH^t\}$ satisfies the following 
ultracontractivity estimate \cite[Section 7.5]{Gr1}
\begin{equation} \label{f: special ultrac}
\bigopnorm{\cH^t}{1;2}
\leq C\, \e^{-bt} \, t^{-n/4} \, (1+t)^{n/4-\de/2}  \quant t \in \BR^+
\end{equation}
for some $\de$ in $[0,\infty)$.
Recall also that a lower bound for the Ricci curvature implies also
an upper bound of the volume growth of $M$ (see (\ref{f: volume growth})).  
Indeed, there are positive constants $\al$, $\be$ and $C$ such that
\begin{equation} \label{f: volume growth}
\mu\bigl(B(p,r)\bigr)
\leq C \, r^{\al} \, \e^{2\be \, r}
\quant r \in [1,\infty) \quant p \in M.
\end{equation}

The following result should be compared with \cite[Proposition~B.5]{T2}.  It provides
an endpoint result to the multiplier theorem \cite[Thm~]{T1}.  

\begin{theorem} \label{t: lim huno}
Assume that $\al$ and $\be$ are as in (\ref{f: volume growth}),
and $\de$ as in (\ref{f: special ultrac}). 
Denote by $N$ the integer $[\!\![n/2+1]\!\!] +1$.  
Suppose that $J$ is an integer $>\max\, \bigl(N+2+\al/2-\de,
N + 1/2\bigr)$.
Then there exists a constant $C$ such that 
$$
\opnorm{m(\cD)}{\frh^1}
\leq C \, \norm{m}{\bS_{\be};J}
\quant m\in H^\infty\bigl(\bS_{\be};J\bigr).
$$
\end{theorem}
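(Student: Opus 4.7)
The plan is to exploit the atomic and ionic decompositions of $\ghu{M}$ (Definitions~\ref{d: Goldberg} and~\ref{d: ion} together with Theorem~\ref{t: equiv ionicalpha}) to reduce the claim to a uniform bound on the images of individual atoms. Since $m$ is bounded on $\BR \subset \bS_{\be}$, the spectral theorem gives $\opnorm{m(\cD)}{\ld{M}} \leq \norm{m}{\bS_{\be};J}$, so the $\ld{M}$-size of $m(\cD) a$ for a $2$-atom $a$ is automatically controlled; what remains is to exhibit $m(\cD) a$ as a sum of $2$-ions at scale $1$ whose coefficients sum to $C\, \norm{m}{\bS_{\be};J}$.

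First I would decompose $m$ dyadically in the spectral variable: using the partition $\sum_{j\in\BZ}\om(t-j) = 1$ from the paper, write $m = m_0 + \sum_{k\geq 1} m_k$ with $m_0$ smooth and supported near $[-1,1]$ and each $m_k$ supported on $\{|\la|\asymp 2^k\}$. The Mihlin condition yields $\norm{m_k^{(\ell)}}{\ly{\BR}} \leq C\, 2^{-k\ell}\, \norm{m}{\bS_{\be};J}$ for $0\leq \ell\leq J$, while the holomorphic extension of $m$ to $\bS_{\be}$ allows shifting the Fourier integration contour, producing exponential weights $e^{\be|\xi|}$ on the transforms $\hat m_k$. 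I would then represent $m_k(\cD) = \int_{\BR} \hat m_k(\xi)\, \cos(\xi\cD)\,\wrt\xi$ and invoke finite propagation speed of the wave group $\cos(\xi\cD)$, valid on any complete Riemannian manifold, to obtain off-diagonal kernel decay for $m_k(\cD)$ of the form $e^{-\be\, d(x,y)}\,\bigl(1 + 2^k d(x,y)\bigr)^{-(J-N)}$ at the cost of a prefactor $2^{-k(J-\mathrm{loss})}$.

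For a standard $2$-atom $a$ supported in a ball $B$ with $r_B\leq 1$, I would use these off-diagonal bounds together with the $\ld{M}$-estimate to control the $\ld{M}$-norm of $m_k(\cD) a$ on each annular shell $A_r := \{x : r \leq d(x,B) < r+1\}$. Combining with the volume bound (\ref{f: volume growth}) via Cauchy--Schwarz, the factor $e^{-\be\, d(x,y)}$ appearing in the kernel, once squared, exactly absorbs the volume growth $\mu(B(\cdot,r)) \lesssim r^{\al} e^{2\be r}$, leaving a polynomial tail controlled by $J$. A locally finite covering of $A_r$ by unit balls (Lemma~\ref{l:bolle}~\rmi) then converts the restriction of $m_k(\cD) a$ to $A_r$ into a finite sum of global $2$-ions at scale $1$ with bounded overlap, whose coefficients sum over both $r$ and $k$ precisely when $J > \max\bigl(N+2+\al/2-\de,\ N+1/2\bigr)$. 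The low-frequency piece $m_0(\cD)$ is handled directly by factoring $m_0(\cD) = \wt m_0(\cD)\, \cH_1$ with $\wt m_0$ smooth and compactly supported, and invoking the ultracontractivity estimate (\ref{f: special ultrac}) on $\cH_1$.

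The main obstacle is the fine calibration between the strip width and the volume-growth exponent. The parameter $\be$ must coincide with the exponent appearing in (\ref{f: volume growth}) so that, after the Fourier contour shift, the pointwise kernel decay $e^{-\be\, d(x,y)}$, once squared in the $L^2$-pairing against the volume factor, leaves only a polynomial remainder; the explicit lower bound imposed on $J$ is exactly what is needed to sum that remainder across both the dyadic frequency scales $k$ and the spatial annuli $r$. Bookkeeping these exponents, and verifying that each annular piece, once renormalised, genuinely satisfies the $\ld{M}$-size condition and the near-cancellation bound $|\int g\,\wrt\mu|\leq r_B^{\al}$ required of a $2$-ion in the sense of Definition~\ref{d: ion}, is the technical heart of the argument.
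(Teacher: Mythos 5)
Your proposal takes a genuinely different route from the paper's proof. The paper reduces the theorem to \cite[Thm~3.4]{MMV2}: it splits $m(\cD) = \cS + \cT$ with $\cS = (\wh\om*m)(\cD)$ and $\cT = (m - \wh\om*m)(\cD)$ (a near/far split on the kernel side, exploiting that $\wh{\wh\om*m} = \om\,\wh m$ is supported in $[-3/4,3/4]$, so that finite propagation speed makes the kernel of $\cS$ supported in $\{d(x,y)\leq 1\}$), and then observes that the coefficient estimates established in \cite{MMV2} for the atomic decomposition of $\cT b$, $b$ a standard $2$-atom, do not actually use the cancellation of $b$: the cancellation is only used to make the resulting pieces standard atoms, not to control the coefficients. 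Dropping cancellation, the same decomposition produces \emph{global} atoms at scale $j+2$, and Lemma~\ref{l: economical decomposition} converts those into scale-$1$ global atoms at a cost of $O(j)$, giving the summability condition on $J$. Your proposal instead redoes the internal machinery of \cite{MMV2} from scratch: dyadic spectral decomposition, contour shift, wave-equation kernel bounds, annular decomposition into ions. That is a legitimate alternative and it is morally the engine hidden inside the citation, so neither route is wasteful; yours is self-contained, the paper's is shorter because the hard estimates are delegated.

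There are, however, two genuine soft spots in your sketch that need repair. First, a dyadic cut $m_k = \psi_k m$ with $\psi_k$ compactly supported cannot itself be holomorphic on $\bS_{\be}$ (a holomorphic function that vanishes on an open set vanishes identically), so the assertion that ``the holomorphic extension of $m$ to $\bS_{\be}$ allows shifting the Fourier integration contour'' applied to $\wh{m_k}$ is not literally valid; one must instead obtain the exponential decay of $\wh{m_k}$ by convolving the decay of $\wh m$ (as a tempered distribution, extracted from the holomorphy of $m$ before cutting) with the rapid decay of $\wh{\psi_k}$, or else perform the contour argument on $m$ and only cut afterwards. Second, for the low-frequency piece $m_0(\cD)$, ultracontractivity of $\cH_1$ only yields an $L^1\to L^2$ bound on $m_0(\cD)a$, not the support/decay information you need to write $m_0(\cD)a$ as a summable combination of ions supported in unit balls. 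You still need Gaussian off-diagonal decay for the heat kernel (or again a Fourier/wave-kernel argument for $m_0$). The paper sidesteps both issues at once: because $\om$ itself (not $m_0$) is compactly supported, finite propagation speed gives the kernel of $\cS$ compact support outright, so no decay bookkeeping is needed for the near-diagonal piece.
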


\begin{proof}
%
We claim that it suffices to prove that for each $2$-atom $a$ at scale~$1$,
the function $m(\cD)\, a$ may be written
as the sum of $2$-atoms supported in balls of $\cB_1$, 
with $\ell^1$ norm of the coefficients controlled
by $C \, \norm{m}{\bS_{\be};J}$.

Indeed, suppose that $f$ is a function in $\frh^1(M)$ and that
$f = \sum_j \la_j \, a_j$ is an atomic decomposition of $f$ with
$\norm{f}{\frh^1}\geq \sum_j \mod{\la_j} -\vep$. 
Since for each $2$-atom $a$ we have $\norm{m(\cD)a}{1}\leq C \, \norm{m}{\bS_{\be};J}$, by Theorem \ref{t:fin}   
$m(\cD)$ extends to a
bounded operator from $\frh^1(M)$ to $\lu{M}$.
Then $m(\cD) f = \sum_j \la_j \, m(\cD) a_j$, where the 
series is convergent in $\lu{M}$. But the partial sums of
the series $\sum_j \la_j \, m(\cD) a_j$ is a Cauchy sequence in $\frh^1(M)$,
hence the series is convergent in $\frh^1(M)$, and the sum must be
the function $m(\cD)f$.
Therefore 
$$
\begin{aligned}
\norm{m(\cD)f}{\frh^1}
& \leq \sum_j \mod{\la_j} \, \norm{m(\cD)a_j}{\frh^1} \\
& \leq C\, \, \norm{m}{\bS_{\be};J}\,  \sum_j \mod{\la_j} \\
& \leq C\, \, \norm{m}{\bS_{\be};J}\, (\norm{f}{\frh^1}+ \vep),
\end{aligned}
$$
and the required conclusion follows by taking
the infimum of both sides with respect to all admissible
decompositions of $f$.  

It has already been shown in the proof of \cite[Thm~3.4]{MMV2} 
that the claim holds for standard atoms. Therefore it suffices to prove it 
for global atoms.   

As in the proof of \cite[Thm~3.4]{MMV2}, 
we split the operator $m(\cD)$ into the sum of two operators 
and analyse them separately.
The functions $\wh\om\ast m$ and $m-\wh\om\ast m$ 
($\om$ is the cut-off function defined above) are bounded. 
Define the operators~$\cS $ and $\cT $ spectrally by 
$$
\cS 
= (\wh\om\ast m) (\cD)
\qquad\hbox{and}\qquad
\cT 
= (m-\wh\om\ast m) (\cD).
$$
Thus
$
m(\cD) 
= \cS  + \cT .
$

\medskip\noindent
Suppose that $a$ is a global $2$-atom
supported in $B_1(p)$ for some $p$ in~$M$. 
Observe that the function $\wh{\om}\ast m$ is bounded and 
\begin{equation}  \label{f: norm est whomastm}
\norm{\wh{\om}\ast m}{\infty} \leq C\, \norm{m}{\infty}
\leq C \, \norm{m}{\bS_{\be};J}. 
\end{equation}
Therefore, $(\wh\om\ast m) (\cD)$ is bounded on $\ld{M}$ by the spectral theorem, and
$$
\opnorm{(\wh\om\ast m) (\cD)}{2}\
\leq \norm{\wh\om\ast m}{\infty}
\leq C \, \norm{m}{\bS_{\be};J}.  
$$
We have used (\ref{f: norm est whomastm}) in the second inequality above.
Observe that the support of the kernel of the operator $(\wh\om\ast m) (\cD)$ 
is contained in $\{(x,y): d(x,y) \leq 1\}$, for $\cL$ possesses the finite
propagation speed property,
hence the function $(\wh\om\ast m)(\cD)a$ is supported in the 
ball with centre~$p$ and radius $2$. 
Moreover, 
$$
\begin{aligned}
\norm{(\wh\om\ast m) (\cD)a}{2}
&\leq C\, \opnorm{(\wh\om\ast m) (\cD)}{2}\, \norm{a}{2} \\
&\leq C\, \norm{m}{\bS_{\be};J}\,  \mu(B_1(p))^{-1/2} \\
&\leq C\, \norm{m}{\bS_{\be};J}\,  \mu(B_2(p))^{-1/2}. \\
\end{aligned}  
$$
We have used the LDP in the last inequality.
Thus, $(\wh\om\ast m) (\cD)a$ is a constant 
multiple of a global atom at scale $2$ and, by 
Lemma~\ref{l: economical decomposition}, 
$$
\norm{(\wh\om\ast m) (\cD) a}{\frh^1}  \leq C \,  \norm{m}{\bS_{\be};J}.
$$

Now we analyse $\cT a$. In the proof of \cite[Thm~3.4]{MMV2}  
it is shown that if $b$ is a standard $2$-atom, then $\cT b$
may be decomposed as 
$$
\cT b 
= \sum_{j=1}^\infty \la_j \, b_j,
$$
where $b_j$ is an atom at scale $j+2$, and 
\begin{equation} \label{f: est laj}
\mod{\la_j} 
\leq C\, \norm{m}{\bS_{\be};J} \, j^{N+\al/2-J-\delta}
\quant j =1,2,3,\ldots
\end{equation}
A close examination of the proof reveals that 
the cancellation property of $b$ is used to show that the 
atoms $b_j$ also have this property, but it is not required in
the proof of (\ref{f: est laj}).
Thus, by arguing as in Step~IV in the proof of \cite[Thm~3.4]{MMV2},
we may conclude that $\cT a$ may be written as 
$$
\cT a 
= \sum_{j=1}^\infty \la_j \, a_j,
$$  
where $a_j$ is a global $2$-atom at scale $j+2$ and $\la_j$ satisfies
estimate (\ref{f: est laj}).
Now  Lemma~\ref{l: economical decomposition} 
(and its version for Riemannian manifolds \cite[Lemma~5.7]{MMV2}) 
imply that there exists a constant $C$ such that 
$$
\norm{a_j}{\frh^1} 
\leq C\, j  
\qquad j=1,2,3,\ldots
$$ 
Therefore
$$
\norm{\cT a}{\frh^1}
\leq C \, \norm{m}{\bS_{\be};J}\sum_{j=1}^\infty j^{1+N+\al/2-J-\delta} 
\leq C\, \norm{m}{\bS_{\be};J},
$$
where $C$ is independent of $a$.  Hence $\cT$ extends to a bounded
operator from $\ghu{M}$ to $\ghu{M}$.

So far, we have proved that there exists
a constant $C$ such that for every  global atom~$a$ 
$$
\norm{\cS a}{\frh^1} + \norm{\cT a}{\frh^1}
\leq  C\, \norm{m}{\bS_{\be};J}. 
$$ 
Hence
$$
\norm{m(\cD)a}{\frh^1}
\leq  C\, \norm{m}{\bS_{\be};J}. 
$$ 
The required conclusion follows from the claim at the beginning of the proof.
\end{proof}
%

\subsection{The translated Riesz transform}

We shall need the following local estimate for 
the space derivative of the heat kernel.

\begin{lemma}\label{l:heat4}
There exists $\eta>0$ such that for all $y \in M$, $t>0$
$$
\int_{d(x,y)\geq \sqrt{t}} |\nabla_x h_s(x,y)| \wrt\mu(x)  
\leq \left\{\begin{array}{ll}
C e^{-\eta t/s} s^{-1/2} & \quant  s \in (0,1]\\
\\
C e^{-\eta t/s} e^{cs} s^{-1/2} & \quant  s\in (1,\infty) \\
\end{array}. \right.\\
$$
\end{lemma}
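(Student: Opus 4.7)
The plan is to deduce the lemma from a Gaussian-type pointwise bound for $|\nabla_x h_s(x,y)|$ and then integrate, using the local doubling property when $s\leq 1$ and the exponential volume growth \eqref{f: volume growth} when $s>1$.

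First I would invoke a classical gradient estimate for the heat kernel on a manifold with Ricci curvature bounded from below: there exist constants $C,\kappa>0$ and $c_{0}\geq 0$ such that
$$
|\nabla_x h_s(x,y)| \leq \frac{C\,\e^{c_{0} s}}{s^{1/2}\,\mu\bigl(B_{\sqrt{s}}(y)\bigr)}\,\e^{-d(x,y)^{2}/(\kappa s)}
\qquad \forall\, x,y\in M,\ s>0,
$$
with $c_{0}=0$ admissible for $s\leq 1$. For $s\leq 1$ this is a standard consequence of Li--Yau type estimates together with the Gaussian bound on $h_s$; for $s>1$ the factor $\e^{c_{0}s}$ absorbs the curvature correction arising from the Ricci lower bound (see, e.g., the discussion in \cite{Gr1}). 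Writing $\e^{-d(x,y)^{2}/(\kappa s)}=\e^{-d(x,y)^{2}/(2\kappa s)}\,\e^{-d(x,y)^{2}/(2\kappa s)}$, on $\{d(x,y)\geq\sqrt{t}\}$ the first factor is bounded by $\e^{-\eta t/s}$ with $\eta:=1/(2\kappa)$; consequently
$$
\int_{d(x,y)\geq\sqrt{t}} |\nabla_x h_s(x,y)|\wrt\mu(x)
\leq \frac{C\,\e^{c_{0}s}\,\e^{-\eta t/s}}{s^{1/2}\,\mu(B_{\sqrt{s}}(y))}\,\int_{M}\e^{-d(x,y)^{2}/(2\kappa s)}\wrt\mu(x).
$$

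The remaining task is to control the Gaussian integral by $\mu(B_{\sqrt{s}}(y))$ (for $s\leq 1$) or by $\e^{c_{1}s}\mu(B_{\sqrt{s}}(y))$ (for $s>1$). I would decompose $M$ into annuli $A_{k}:=\{x: k\sqrt{s}\leq d(x,y)<(k+1)\sqrt{s}\}$ and estimate
$$
\int_{M}\e^{-d(x,y)^{2}/(2\kappa s)}\wrt\mu(x)
\leq \sum_{k\geq 0}\e^{-k^{2}/(2\kappa)}\,\mu\bigl(B_{(k+1)\sqrt{s}}(y)\bigr).
$$
For $s\leq 1$, iterating the LDP on the chain of balls $B_{\sqrt{s}}(y)\subset B_{2\sqrt{s}}(y)\subset\cdots$ yields $\mu(B_{(k+1)\sqrt{s}}(y))\leq C\,(k+1)^{\gamma}\mu(B_{\sqrt{s}}(y))$ for some $\gamma$ depending only on the local doubling constant, and the weighted series converges to a constant. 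For $s>1$, the bound \eqref{f: volume growth} gives $\mu(B_{(k+1)\sqrt{s}}(y))\leq C\,(k+1)^{\alpha}s^{\alpha/2}\e^{2\beta(k+1)\sqrt{s}}$; completing the square in $k$ via $2\beta(k+1)\sqrt{s}-k^{2}/(4\kappa)\leq 2\beta\sqrt{s}+C_{\kappa}\beta^{2}s$ sums the series to at most $C\,\e^{c_{1}s}$ after splitting off a further $\e^{-k^{2}/(4\kappa)}$ from the Gaussian. Substituting back, the $\mu(B_{\sqrt{s}}(y))$ factors cancel and the two cases of the lemma follow, perhaps after slightly decreasing $\eta$ and enlarging $c$.

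The main obstacle is the large-$s$ regime: the Gaussian decay from $d(x,y)\geq\sqrt{t}$ must survive in competition with the exponential curvature correction $\e^{c_{0}s}$ in the kernel bound and the exponential volume growth of $M$. The completing-the-square trick absorbs the volume contribution into a single exponential $\e^{c_{1}s}$, which is exactly the tolerance built into the statement of the lemma; a less careful estimate would produce either a worse exponent in $s$ or a weakening of the Gaussian constant $\eta$.
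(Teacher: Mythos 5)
Your route is genuinely different from the paper's. The paper does not prove the lemma directly: it defers to Coulhon--Duong \cite{CD}, whose argument (and its adaptation to the locally doubling case) rests on \emph{weighted integrated} estimates, namely a bound of the form $\int_M \mod{\nabla_x h_s(x,y)}^2\,\e^{\rho d(x,y)^2/s}\wrt\mu(x)\leq C\,\e^{cs}\,s^{-1}\mu\bigl(B_{\sqrt s}(y)\bigr)^{-1}$, obtained by integration by parts from the Gaussian upper bounds on $h_s$ and $\partial_s h_s$; one then gets the $L^1$ tail estimate by Cauchy--Schwarz against $\int_{d\geq\sqrt t}\e^{-\rho d^2/s}\wrt\mu$. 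You instead start from a \emph{pointwise} Gaussian upper bound for $\mod{\nabla_x h_s(x,y)}$ and integrate over annuli. That first step is the entire difficulty of the lemma, and it is not a ``classical consequence of Li--Yau'' under the standing hypotheses (Ricci bounded below, positive injectivity radius). The Li--Yau inequality in the $\mathrm{Ric}\geq -K$ regime controls $\mod{\nabla\log h_s}^2-\al\,\partial_s\log h_s$, not $\mod{\nabla\log h_s}$ alone; to extract a pointwise gradient bound you must additionally invoke Gaussian upper bounds on $\mod{\partial_s h_s}$ (Davies, Grigor'yan) or a Hamilton--Kotschwar type logarithmic gradient estimate, and in either case you pick up the volume factor $\mu(B_{\sqrt s}(x))$ at the \emph{other} pole, whose comparison with $\mu(B_{\sqrt s}(y))$ for $d(x,y)$ large costs an extra exponential in $d(x,y)$ that must be reabsorbed into the Gaussian. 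None of this is fatal, but as written the crucial estimate is asserted rather than derived, and it is precisely to avoid needing pointwise gradient bounds that \cite{CD} and the present paper work with the weighted $L^2$ estimate. You should either carry out the derivation of the pointwise bound (Li--Yau plus the Gaussian bounds on $h_s$ and $\partial_s h_s$, tracking the volume-ratio correction), cite a specific reference for it, or switch to the integrated route.

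A second, smaller point: in the annulus summation for $s\leq 1$ you claim $\mu\bigl(B_{(k+1)\sqrt s}(y)\bigr)\leq C(k+1)^{\ga}\mu\bigl(B_{\sqrt s}(y)\bigr)$ ``by iterating the LDP''. The LDP only controls doublings of balls of radius at most a fixed scale; once $(k+1)\sqrt s$ exceeds $1$ the iteration produces a constant growing exponentially in $k$ (equivalently, Bishop--Gromov gives $\mu(B_R)/\mu(B_r)\leq C(R/r)^n\e^{CR}$, not a polynomial ratio, on a manifold of exponential growth). The series still converges because the Gaussian weight $\e^{-k^2/(2\ka)}$ dominates any $\e^{Ck}$, so the conclusion survives, but the justification should be corrected. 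With these two repairs — a genuine proof or citation for the pointwise gradient bound, and Bishop--Gromov in place of bare local doubling for the large annuli — your argument would give an alternative, self-contained proof of the lemma.
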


This result is stated in \cite{CD}, though  
its proof is given in full detail only in the case where $M$
is globally doubling.  However, it is not hard to modify
the argument to produce a proof of Lemma~\ref{l:heat4}.
The proof hinges on upper estimates for the heat kernel 
and its time derivatives 
(see \cite{Gr1,Gr2,D}) and on 
weighted estimates for the space derivative of the heat kernel (\cite{CD}).

\begin{theorem}\label{t:riesz}
There exists $a>0$ such that the translated Riesz transform $\nabla(a \cI+ \cL)^{-1/2}$ is bounded from $\frh^1(M)$ to $\lu{M}$.
\end{theorem}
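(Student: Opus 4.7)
The plan is to apply Theorem~\ref{t:fin}, which reduces the boundedness of $R_a := \nabla(a\cI + \cL)^{-1/2}$ from $\ghu{M}$ to $\lu{M}$ to showing the uniform estimate $\sup_\alpha \norm{R_a \alpha}{1} < \infty$ over all $2$-atoms $\alpha$ at scale $1$, both standard and global. The main analytic tool is the subordination identity
\[
R_a = \frac{1}{\sqrt{\pi}} \int_0^\infty s^{-1/2} \mathrm{e}^{-as} \, \nabla \cH_s \, ds,
\]
combined with Lemma~\ref{l:heat4} and the $L^2$-contractivity $\opnorm{R_a}{2} \leq 1$, which follows from $\nabla^* \nabla = \cL$ and the spectral theorem, uniformly in $a>0$.

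Fix an atom $\alpha$ supported in a ball $B \in \cB_1$ and split $\norm{R_a\alpha}{1} = \norm{R_a\alpha}{\lu{2B}} + \norm{R_a\alpha}{\lu{M \setminus 2B}}$. The local part is immediate: Cauchy--Schwarz, the size condition $\norm{\alpha}{2} \leq \mu(B)^{-1/2}$, the $L^2$-contractivity of $R_a$, and the LDP together yield $\norm{R_a\alpha}{\lu{2B}} \leq [\mu(2B)/\mu(B)]^{1/2} \leq C$. For the far part I would insert the subordination formula and perform the $y$-integration inside $B$ first. Since $d(x,y) \geq r_B$ whenever $x \in M \setminus 2B$ and $y \in B$, Lemma~\ref{l:heat4} applied with $t = r_B^2$ gives
\[
\int_{M \setminus 2B} \mod{\nabla \cH_s \alpha} \wrt \mu \leq C \, \norm{\alpha}{1} \, s^{-1/2} \, \mathrm{e}^{-\eta r_B^2/s} \, \Psi(s),
\]
where $\Psi(s) = 1$ for $s \in (0,1]$ and $\Psi(s) = \mathrm{e}^{cs}$ for $s > 1$; note $\norm{\alpha}{1} \leq 1$ by Cauchy--Schwarz.

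Integrating against $s^{-1/2} \mathrm{e}^{-as}$, the resulting one-variable integral splits as $\int_0^1 s^{-1} \mathrm{e}^{-\eta r_B^2/s}\mathrm{e}^{-as}\, ds + \int_1^\infty s^{-1} \mathrm{e}^{(c-a)s} \, ds$. The second integral is finite exactly when $a > c$, which determines the admissible range of $a$. For global atoms one has $r_B = 1$, and the first integral reduces to $\int_0^1 s^{-1} \mathrm{e}^{-\eta/s}\, ds < \infty$; combined with the local bound this gives $\norm{R_a\alpha}{1} \leq C$ for the entire family of global atoms without additional input.

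The main obstacle is the control of standard atoms with $r_B \ll 1$: ignoring cancellation, the first integral behaves like $\lvert\log r_B\rvert$ due to the contribution from $s \in (r_B^2, 1)$. I would dispose of this by adapting the argument of Russ~\cite{Ru}: exploit $\int_B \alpha \wrt \mu = 0$ by writing $\nabla \cH_s \alpha(x) = \int_B [\nabla_x h_s(x,y) - \nabla_x h_s(x, c_B)]\, \alpha(y) \wrt \mu(y)$ for $x \in M \setminus 2B$, and invoke a Lipschitz-type estimate on $y \mapsto \nabla_x h_s(x,y)$---available in the bounded-geometry setting via Duhamel formulas and time-derivative heat kernel bounds as in \cite{CD,Gr1}---to produce an additional factor bounded by $r_B / \sqrt{s}$ whenever $r_B \leq \sqrt{s}$. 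This supplies the missing power of $s$ needed to render the $s$-integral uniformly finite in $r_B \in (0,1]$, and, upon fixing any $a > c$, closes the proof for every $2$-atom.
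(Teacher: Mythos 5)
Your strategy coincides with the paper's in its essential analytic content: the paper also represents the kernel of $\nabla(a\cI+\cL)^{-1/2}$ by the subordination formula, applies Lemma~\ref{l:heat4} to the integral of $\mod{\nabla_x h_s(x,y)}$ over the complement of a fixed ball, splits the $s$-integral at $s=1$, and obtains convergence precisely for $a>c$. Your far-field computation for global atoms is, after pulling out $\norm{\alpha}{1}\leq 1$, exactly the paper's verification of the kernel condition $\sup_{y}\int_{B_2(y)^{c}}\mod{k(x,y)}\wrt\mu(x)<\infty$. The organizational difference lies in how the two halves of the atom family are handled: the paper quotes Russ \cite{Ru} for the $H^1(M)\to\lu{M}$ boundedness (which covers the standard, cancellative atoms for $a$ large) and then invokes \cite[Prop~4.5]{CMM3} to pass from that plus the kernel condition to boundedness on all of $\frh^1(M)$, whereas you run the atom-by-atom estimate directly through Theorem~\ref{t:fin}. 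On the global-atom side the two are equivalent, and your version is arguably more self-contained.

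The one place your write-up falls short of a proof is the standard-atom case. You correctly identify that without cancellation the $s$-integral degenerates like $\mod{\log r_B}$, and you propose to recover the missing decay from $\int_B\alpha\wrt\mu=0$ via a Lipschitz estimate on $y\mapsto\nabla_x h_s(x,y)$. But that is in effect a mixed second-order heat kernel bound (control of $\nabla_x\nabla_y h_s$, integrated in $x$ off the diagonal), which is not among the estimates available here --- Lemma~\ref{l:heat4} controls only $\nabla_x h_s$ itself --- and is genuinely harder than the first-order bounds; establishing it under the present hypotheses is essentially the technical content of Russ's paper. As written, your treatment of standard atoms is therefore a sketch of \cite{Ru} rather than a proof. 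If you instead cite \cite{Ru} outright for the standard atoms, as the paper does, the argument closes; note only that Russ's theorem requires $a$ large in its own right, so the final admissible $a$ is the larger of that threshold and the $a>c$ forced by your global-atom computation.
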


\begin{proof}
We know that if $a$ is large enough, 
then $\nabla(a \cI+ \cL)^{-1/2}$ is bounded from 
$H^1(M)$ to $\lu{M}$ by \cite{Ru}. 
Therefore it suffices to show that the kernel $k$ of 
$\nabla(a \cI+ \cL)^{-1/2}$ satisfies the condition
\begin{equation}\label{e:ker}
\sup_{y \in M}\int_{B_2(y)^{c}}|k(x,y)|\ \wrt\mu(x) < \infty
\end{equation}
and then apply \cite[Prop~4.5]{CMM3}.
The kernel is given, off the diagonal,~by
$$
k(x,y)=\int_{0}^{+\infty}\frac{\e^{-as}}{\sqrt{s}}\nabla_{x}h_{s}(x,y)\ \wrt s.
$$
By Fubini's theorem, we obtain
$$
\begin{aligned}
\int_{B_2(y)^{c}}|k(x,y)|\ \wrt\mu(x)
& =\int_{B_(y)^{c}} \left| \int_{0}^{+\infty}\frac{\e^{-as}}{\sqrt{s}}\nabla_{x}h_{s}(x,y)\ \wrt s\right| \wrt\mu(x)\\
& \leq \int_{0}^{+\infty}\frac{\e^{-as}}{\sqrt{s}} \int_{d(x,y)\geq 2}|\nabla_{x}h_{s}(x,y)|\wrt\mu(x) \wrt s\\
& := I_1+I_2,
\end{aligned}
$$
where 
$$
I_1=\int_{0}^{1}\frac{\e^{-as}}{\sqrt{s}} \int_{d(x,y)\geq 2}|\nabla_{x}h_{s}(x,y)|\wrt\mu(x) \wrt s
$$
and 
$$
I_2=\int_{1}^{+\infty}\frac{\e^{-as}}{\sqrt{s}} \int_{d(x,y)\geq 2}|\nabla_{x}h_{s}(x,y)|\wrt\mu(x) \wrt s.
$$
Now we apply Lemma~\ref{l:heat4} to estimate the inner integrals. We get
$$
I_1 \leq C \int_{0}^{1}\frac{\e^{-as-4{\eta}/{s}}}{s} \wrt s \leq C \int_{0}^{1}\frac{\e^{-4{\eta}/{s}}}{s^2} \wrt s
=C \, \frac{\e^{-4\eta}}{4\eta},
$$ 
and
$$
I_2 
\leq C\int_{1}^{+\infty} \frac{\e^{-(a-c)s-4{\eta}/{s}}}{s} \wrt s 
\leq C\int_{1}^{+\infty} \e^{-(a-c)s}\wrt s.
$$
Note that the last integral converges only when $a>c$.
Therefore (\ref{e:ker}) holds if $a>c$ and for such $a$ 
the operator $\nabla(a \cI+ \cL)^{-1/2}$ extends to a 
bounded operator from $\frh^1(M)$ to $\lu{M}$, as required.
\end{proof}

\addcontentsline{toc}{chapter}{\bibname}

\end{document}